	\def\ps@pprintTitle{%
 	\let\@oddhead\@empty
	\let\@evenhead\@empty
	\def\@oddfoot{\centerline{\thepage}}%
	\let\@evenfoot\@oddfoot}
\pgfplotsset{compat=newest} 
\pgfplotsset{plot coordinates/math parser=false} 
\newlength\fwidth
\newlength\fheight
\newtheorem{definition}{Definition}
\newcommand{\Ra}[1]{\color{black} {#1}}
\newcommand{\Rb}[1]{\color{black} {#1}}
\newcommand{\Rc}[1]{\color{black} {#1}}
\newcommand{\Rd}[1]{\color{black} {#1}}
\newtheorem{theorem}{Theorem}
\theoremstyle{definition}
\newtheorem{remark}{Remark}
\newtheorem{assumption}{Assumption}
\DeclareMathOperator{\sech}{sech}
\newcommand{\real}{\mathbb{R}}
\newcommand{\bzero}{\mathbf{0}}
\newcommand{\bvarphi}{ \boldsymbol \varphi}
\newcommand{\bvarphihat}{\widehat{ \bvarphi}}
\newcommand{\bphihat}{\widehat{ \bphi}}
\newcommand{\dt}{\Delta t}
\newcommand{\y}{\mathbf y}
\newcommand{\dd}{\rm d}
\newcommand{\f}{\mathbf{f}}
\newcommand{\q}{\mathbf q}
\newcommand{\w}{\mathbf w}
\newcommand{\kapbar}{\bar{\kappa}}
\newcommand{\qdot}{\dot{\q}}
\newcommand{\pdot}{\dot{\p}}
\newcommand{\ydot}{\dot{\y}}
\newcommand{\yhat}{\widehat{\y}}
\newcommand{\ybar}{\bar{\y}}
\newcommand{\nbar}{\bar{n}}
\newcommand{\what}{\widehat{\w}}
\newcommand{\phat}{\widehat{\p}}
\newcommand{\phatdot}{\dot{\phat}}
\newcommand{\p}{\mathbf p}
\newcommand{\x}{\mathbf x}
\newcommand{\bphi}{\boldsymbol \phi}
\newcommand{\bPhi}{\boldsymbol \Phi}
\newcommand{\bq}{\q}
\newcommand{\V}{\mathbf V}
\newcommand{\fn}{\f_{\text{non}}}
\newcommand{\Y}{\mathbf Y}
\newcommand{\A}{\mathbf A}
\newcommand{\D}{\mathbf D}
\newcommand{\Dhat}{\widehat{\D}}
\newcommand{\B}{\mathbf B}
\newcommand{\Q}{\mathbf Q}
\newcommand{\qhat}{\widehat{\q}}
\newcommand{\qhatdot}{\mathbf{\dot{\widehat q}}}
\newcommand{\Pp}{\mathbf P}
\newcommand{\In}{\mathbf I_n}
\newcommand{\Jn}{\mathbf J_{2n}}
\newcommand{\Jr}{\mathbf J_{2r}}
\begin{document}
\begin{frontmatter}
\title{Nonlinear energy-preserving model reduction with lifting transformations that quadratize the energy}

 		\author[affil1]{Harsh Sharma\corref{cor1}}
		 		\cortext[cor1]{Corresponding author}
		\ead{hasharma@ucsd.edu}
 		\author[affil1]{Juan Diego Draxl Giannoni}
 		\author[affil1]{Boris Kramer}

			\address[affil1]{Department of Mechanical and Aerospace Engineering, University of California San Diego, CA, United States}

\begin{abstract}
Existing model reduction techniques for high-dimensional models of conservative partial differential equations (PDEs) encounter computational bottlenecks when dealing with systems featuring non-polynomial nonlinearities. This work presents a nonlinear model reduction method that employs lifting variable transformations to derive structure-preserving quadratic reduced-order models for conservative PDEs with general nonlinearities. We present an energy-quadratization strategy that defines the auxiliary variable in terms of the nonlinear term in the energy expression to derive an equivalent quadratic lifted system with quadratic system energy. The proposed strategy combined with proper orthogonal decomposition model reduction yields quadratic reduced-order models that conserve the quadratized lifted energy exactly in high dimensions. We demonstrate the proposed model reduction approach on four nonlinear conservative PDEs: the one-dimensional wave equation with exponential nonlinearity, the two-dimensional sine-Gordon equation, the two-dimensional Klein-Gordon equation with parametric dependence,  and the two-dimensional Klein-Gordon-Zakharov equations. The numerical results show that the proposed lifting approach is competitive with the state-of-the-art structure-preserving hyper-reduction method in terms of both accuracy and computational efficiency in the online stage while providing significant computational gains in the offline stage.
 \end{abstract}
\end{frontmatter}
\section{Introduction}
\label{sec:introduction}
High-dimensional nonlinear full-order models (FOMs) of conservative PDEs appear in a wide variety of science and engineering areas ranging from climate modeling to plasma physics. These nonlinear FOMs are typically derived via structure-preserving methods~\cite{bridges2006numerical} that discretize conservative PDEs such that the resulting space-discretized models satisfy nonlinear conservation laws. To meet the rapidly increasing demand for accurate numerical simulations of complex physical systems, the field of projection-based model reduction~\cite{benner2015survey} has developed principled techniques to derive computationally efficient reduced-order models (ROMs) via the projection of FOM operators on low-dimensional subspaces. A major challenge in the context of model reduction of  conservative nonlinear FOMs is  preserving the qualitative properties of the dynamics in the reduced setting, especially the energy conservation law, as it plays a crucial role in characterizing the nonlinear dynamics.

Structure-preserving model reduction was first explored in the context of Lagrangian mechanical systems~\cite{lall2003structure} where the authors derived nonlinear Lagrangian ROMs that conserved the nonlinear FOM energy. For nonlinear FOMs of interconnected systems modeled using the port-Hamiltonian framework,  structure-preserving model reduction via preservation of the underlying Dirac structure has been presented in, e.g.,~\cite{polyuga2010structure, hartmann2010balanced, gugercin2012structure, polyuga2012effort}. In the context of the Hamiltonian formulation of conservative FOMs, symplectic model reduction of Hamiltonian systems was introduced in~\cite{peng2016symplectic} where the authors derived Hamiltonian ROMs by projecting the Hamiltonian FOM operators onto a symplectic subspace obtained via proper symplectic decomposition. Building on this work,  reduced basis methods for structure-preserving model reduction of parametric Hamiltonian systems and Poisson systems have been presented in~\cite{afkham2017structure} and~\cite{hesthaven2021structure}, respectively. A  more general symplectic model reduction approach with non-orthonormal bases was proposed in~\cite{buchfink2019symplectic}. In a similar direction, the authors in~\cite{gong2017structure} modified the standard POD-Galerkin approach such that the Hamiltonian structure is preserved after the Galerkin projection step. The above methods laid the foundation for recent developments in
structure-preserving model reduction for Hamiltonian systems and more general gradient systems in both intrusive~\cite{hesthaven2021structureb,hesthaven2022rank, schein2021preserving, buchfink2023symplectic, sharma2023symplectic, gruber2024variationally} and nonintrusive settings~\cite{sharma2022hamiltonian,sharma2024preserving,gruber2023canonical,geng2024gradient,filanova2023operator,sharma2024lagrangian}. However, all of the aforementioned structure-preserving approaches suffer from computational efficiency issues as the evaluation of the nonlinear components of the structure-preserving ROM vector field still scales with the FOM dimension. 

This computational bottleneck is a well-known issue in the nonlinear model reduction community and has led to the development of hyper-reduction methods. These methods introduce a second level of approximation to reduce the computational cost involved in the evaluation of the nonlinear terms. In particular, the discrete empirical interpolation method (DEIM)~\cite{barrault2004empirical,chaturantabut2010nonlinear,drmac2016new} has been shown to be effective for nonlinear model reduction over a range of applications. However, the standard hyper-reduction approaches for general nonlinear systems do not preserve the underlying Hamiltonian structure. 

While the subfield of hyper-reduction methods has grown considerably in the last decade, progress on structure-preserving hyper-reduction has been less rapid. The authors in~\cite{farhat2015structure} presented a cubature approach, the energy-conserving sampling and weighting method, where the vector field obtained from the nonlinear conservative FOM is approximated with a weighted average of the nonlinear vector field components on a coarser mesh. However, this approach is limited to nonlinear conservative FOMs obtained via finite element discretization of conservative PDEs. In~\cite{carlberg2015preserving}, the authors presented a DEIM method for nonlinear mechanical oscillators in structural dynamics where the trajectory does not deviate drastically from equilibrium. The underlying assumption about trajectories being localized around an equilibrium point, however, is generally not met by nonlinear conservative FOMs considered herein. A modification of the DEIM method for nonlinear port-Hamiltonian systems has been presented in~\cite{chaturantabut2016structure} where the nonlinear gradient of the Hamiltonian function is approximated in the space where the DEIM projection is orthogonal. Even though this method can be adapted to the Hamiltonian setting to derive nonlinear ROMs that preserve the underlying Hamiltonian structure, the paper does not provide state space error bounds for the resulting ROM. For dynamical systems with a first integral, an energy-conserving  hyper-reduction approach via preservation of the skew-symmetric structure has been proposed in~\cite{miyatake2019structure} but the cost of deriving the ROM operators for this method is computationally prohibitive as it scales with the FOM dimension. In~\cite{klein2024energy}, the authors presented an energy-conserving hyper-reduction method for energy-and momentum-conserving ROMs of incompressible Navier-Stokes equations. However, the problem-specific formulation of this method does not generalize to many nonlinear conservative PDEs. The authors in~\cite{wang2021structure} obtained a structure-preserving DEIM approximation for general nonlinear Hamiltonian systems by decomposing the Hamiltonian into the Euclidean product of the nonlinear vector field with a constant vector. Building on this idea, the authors in~\cite{pagliantini2023gradient} developed a gradient-preserving hyper-reduction approach that guarantees the preservation of the FOM Hamiltonian asymptotically. Moreover, this work also provided state space error bounds by extending the a priori error estimate result for the standard POD-DEIM in~\cite{chaturantabut2012state} to the Hamiltonian setting. This structure-preserving DEIM (spDEIM) method, however, requires a computationally demanding offline phase for parametric problems.

Projection-based nonlinear model reduction via lifting~\cite{gu2011qlmor, benner2015two, kramer2019nonlinear,benner2018mathcal,kramer2019balanced} has emerged as a promising research direction. These methods first introduce auxiliary variables to transform a general nonlinear system into an equivalent finite-dimensional lifted system with quadratic dynamics and then project the quadratic operators of the high-dimensional lifted system onto a reduced space to obtain a quadratic ROM. The key advantage of quadratizing (polynomializing) the dynamics before nonlinear model reduction is that it eliminates the need for additional hyper-reduction/interpolation techniques. This lifting approach has also been extended to the nonintrusive setting in~\cite{qian2020lift, qian2019transform,gosea2018data,swischuk2020,bychkov2024exact} where lifting transformations are exploited to learn low-order polynomial ROMs of complex nonlinear FOMs directly from the lifted data. While the existence of a finite-dimensional quadratic representation in a lifted setting is not universally guaranteed (see~\cite{KraPog_survey_quadratization_2025} for a survey of results), the work in~\cite{gu2011qlmor,bychkov2024exact} shows that almost all nonlinear systems in science and engineering applications can be lifted to quadratic form. While most lifting transformations in the literature have been derived by hand, algorithms with provable convergence guarantees exist~\cite{hemery2020complexity}, such as \textsf{QBee} \cite{bychkov2024exact} and \textsf{BIOCHAM}~\cite{biocham4}. Due to the problem-specific and non-unique nature of lifting transformations, these transformations are specifically derived for each problem. However, this current approach focuses on the nonlinear terms in the governing equations and does not account for the qualitative properties (e.g., energy conservation) of the nonlinear conservative FOM.  As a result, the quadratic ROM obtained by projecting the lifted quadratic FOM onto a reduced space is not guaranteed to conserve energy (see Section~\ref{sec:motivation}). 

The main goal of this work is to develop a structure-preserving lifting approach and subsequently derive computationally efficient quadratic ROMs that conserve the energy of the nonlinear PDE. The main contributions of this work are:
\begin{enumerate}
\item We present an energy-quadratization strategy for deriving lifting transformations that yields structure-preserving quadratic ROMs. In contrast to the standard lifting approach of defining auxiliary variables based on the nonlinear terms in the governing equations, the proposed approach defines auxiliary variables such that the system energy in the lifted setting is quadratic. We also present a theoretical result that shows that the proposed strategy combined with POD model reduction yields quadratic ROMs that are guaranteed to conserve the lifted FOM energy exactly.
\item We derive the structure-preserving lifting transformations and the corresponding quadratic ROMs for three nonlinear Hamiltonian PDEs, including a two-dimensional nonlinear wave equation with parametric dependence. The numerical results show that quadratic ROMs obtained via the proposed energy-quadratization strategy achieve accuracy and computational efficiency similar to the state-of-the-art spDEIM ROMs with significantly lower offline computational costs.
\item We apply the proposed energy-quadratization strategy to the Klein-Gordon-Zakharaov equations, a system of coupled PDEs with a nonlinear conservation law, to derive accurate and stable quadratic ROMs of a nonlinear conservative FOM with $960{,}000$ degrees of freedom. This numerical example demonstrates the applicability of the proposed structure-preserving lifting approach for a wider class of nonlinear conservative PDEs that do not have a canonical Hamiltonian formulation.
\end{enumerate}
The paper is structured as follows. Section~\ref{sec:background} reviews the basics of conservative PDEs and their symplectic model reduction. We also summarize the standard lifting approach for nonlinear model reduction of general nonlinear systems. Section~\ref{sec:method} introduces our structure-preserving lifting approach for deriving quadratic ROMs that are guaranteed to be energy-conserving at the lifted FOM level. Section~\ref{sec:numerical} demonstrates the proposed structure-preserving lifting approach on four nonlinear conservative PDEs with increasing complexity: one-dimensional nonlinear wave equation with exponential nonlinearity, two-dimensional sine-Gordon equation, two-dimensional Klein-Gordon equation with parametric dependence, and  two-dimensional Klein-Gordon-Zakharov equation. Finally, Section~\ref{sec:conclusion} provides concluding remarks and suggests future research directions.

\section{Background}
\label{sec:background}
In Section~\ref{sec:fom} we introduce the nonlinear conservative PDEs  considered in this work. In Section~\ref{sec:spmor} we review the structure-preserving model reduction of nonlinear conservative FOMs. {\Ra{In Section~\ref{sec:spdeim}  we provide an overview of the structure-preserving hyper-reduction strategy from~\cite{pagliantini2023gradient}, which we use later to compare our results with.}} In Section~\ref{sec:lifting} we discuss the construction of projection-based nonlinear ROMs via lifting transformations.
\subsection{Nonlinear conservative PDEs and their spatial discretization}
\label{sec:fom}
Let $\Omega \subseteq \real^d$ be the spatial domain and consider the nonlinear wave PDE
\begin{equation}
\frac{\partial^2 \phi (\x,t)}{\partial t^2}=\mathbf \Delta \phi (\x,t) - f_{\text{non}}(\phi (\x,t)), 
\label{eq:pde}
\end{equation}
where $\x=(x_1,x_2,\cdots,x_d) \in \Omega$ is the spatial variable, $t$ is time, $\mathbf \Delta$ is the Laplacian operator, $\phi (\x,t)$ is the scalar state field,  and the nonlinear component of the vector field, $f_{\text{non}}(\phi (\x,t)):=\nabla(g(\phi(\x,t)))$, is derived from a smooth nonlinear function $g(\phi(\x,t))$. A characteristic feature of nonlinear wave equations of the form~\eqref{eq:pde} is that the total energy
\begin{equation}
\mathcal E[\phi(\x,t)]:=\int_{\Omega} \left( \frac{1}{2}\left( \frac{\partial \phi(\x,t)}{\partial t} \right)^2 + \frac{1}{2}\left(\nabla \phi(\x,t) \right)^2  + g(\phi(\x,t)) \right) \dd \x,
\label{eq:epde}
\end{equation}
is conserved for $t\geq0$. The integrand in~\eqref{eq:epde} typically has the physical interpretation of energy density with the first term in the integrand representing the kinetic energy and the other two terms representing the nonlinear potential energy.

Since energy conservation is intricately related to accurate and stable numerical solution of nonlinear conservative PDEs, the field of structure-preserving numerical methods~\cite{sharma2020review} has developed a wide variety of spatial discretization schemes that conserve the energy. These approaches typically derive energy-conserving schemes in two steps. The first step is to rewrite~\eqref{eq:pde} into the first-order form by defining $q(\x,t):=\phi(\x,t)$ and $p(\x,t):=\frac{\partial}{\partial t}(\phi(\x,t))$, i.e.,
\begin{equation}
\label{eq:first_pde}
\frac{\partial q (\x,t)}{\partial t}=p(\x,t), \qquad \frac{\partial p (\x,t)}{\partial t}=\mathbf \Delta q (\x,t) - f_{\text{non}}(q (\x,t)). 
\end{equation}
The second step discretizes the first-order PDEs in~\eqref{eq:first_pde} to derive nonlinear conservative FOMs of the form
\begin{equation}
    \dot{\q}(t)=\p(t), \qquad
    \dot{\p}(t)=\D \q(t) - \fn(\q(t)),
\label{eq:cons_fom}
\end{equation}
where $\qdot(t) \in \real^{n}$ and $\pdot(t)\in \real^{n}$ denote the time derivatives of the space-discretized state vectors $\q(t)\in \real^{n}$ and $\p(t)\in \real^{n}$, respectively, $\D=\D^\top \in \real^{n \times n}$ is the symmetric {\Ra{discrete Jacobian obtained via symmetric finite differences}}, and $ \fn(\q(t)) \in \real^{n}$ is the nonlinear component of the space-discretized vector field. The FOM~\eqref{eq:cons_fom} conserves the space-discretized nonlinear FOM energy
\begin{equation}
E(\q,\p,t)=\frac{1}{2} \p(t)^\top \p(t) - \frac{1}{2}\q(t)^\top\D \q(t) + \sum_{i=1}^n \left( g(q_i(t)) \right).
\label{eq:ed_fom}
\end{equation}
From hereon, we simplify the notation by omitting explicit dependence on time: the FOM state vectors $\q(t)$ and $\p(t)$ at time $t$ are therefore denoted as $\q$ and $\p$, respectively.   
\subsection{Symplectic model reduction of nonlinear conservative PDEs}
\label{sec:spmor}
In this section, we review the derivation of structure-preserving ROMs for nonlinear conservative PDEs of the form~\eqref{eq:pde}.  
Symplectic model reduction methods leverage the canonical Hamiltonian structure of~\eqref{eq:pde} to write the nonlinear conservative FOM in the form
\begin{equation}
\ydot=\Jn\nabla H(\y),
\label{eq:fom}
\end{equation}
where $\y=[\q^\top,\p^\top]^\top \in \real^{2n}$ is the space-discretized FOM state vector, $\Jn\in \real^{2n\times 2n}$ is the canonical symplectic matrix, and the Hamiltonian function, $H(\y)$, is the space-discretized nonlinear FOM energy from~\eqref{eq:ed_fom}, i.e. $H(\y)=E(\q,\p)$. The main goal in symplectic model reduction is to derive nonlinear conservative ROMs that are Hamiltonian.

Symplectic model reduction techniques approximate the high-dimensional state as $\y \approx \V \yhat$ where $\yhat \in \real^{2r}$ is the reduced state vector and $\V \in \real^{2n \times 2r}$ is the symplectic basis matrix that satisfies $\V^\top\Jn\V=\Jr$. The symplectic Galerkin projection is defined as $\yhat=\V^+\y \in \real^{2r}$ where $\V^+$ is the symplectic inverse of the symplectic basis matrix $\V$, i.e., $\V^+:=\Jr^\top\V^\top\Jn \in \real^{2r \times 2n}$. The authors in~\cite{peng2016symplectic} present three proper symplectic decomposition (PSD) techniques for constructing the symplectic basis matrix $\V$ from the FOM snapshot data matrices $\Q:=[\q_1,\cdots,\q_K] \in \real^{n \times K}$ and $\Pp:=[\p_1,\cdots,\p_K] \in \real^{n \times K}$.

In this work, we focus on the cotangent lift algorithm which computes a PSD basis matrix $\V$ with block-diagonal structure that preserves the topology of the FOM state vector. This algorithm builds an orthogonal and symplectic basis matrix of the form $\V=\text{blkdiag}(\bPhi,\bPhi)\in \real^{2n\times 2r}$ where $\bPhi \in \real^{n\times r}$ is computed via singular value decomposition (SVD) of extended snapshot data matrix $\Y_e:=[\Q,\Pp] \in \real^{n \times 2K}$.  {\Ra{Assuming the residual $\mathbf r=\V \dot{\yhat}-\Jn\nabla H(\V\yhat)$ is orthogonal to the column space of the PSD basis matrix $\V$ for all $t\geq 0$,}} the nonlinear Hamiltonian ROM can be derived from the symplectic projection of the nonlinear FOM~\eqref{eq:fom} onto the symplectic subspace, i.e., ${\Ra{\V^\top\mathbf r =\bzero}}$. The governing equations for the resulting $2r-$dimensional Hamiltonian ROM are
\begin{equation}
\dot{\yhat}=\Jr\nabla \widehat{H}(\yhat),
\label{eq:ham_rom}
\end{equation}
where the Hamiltonian for the ROM is defined as
\begin{equation}
\widehat{H}(\yhat):=H(\V\yhat)=\frac{1}{2} \phat^\top \phat -\frac{1}{2}\qhat^\top\left(\V^\top\D\V\right) \qhat + {\Ra{\widehat{H}_{\text{non}}(\qhat)}}, 
\label{eq:ham_red}
\end{equation}
{\Ra{where $\widehat{H}_{\text{non}}(\qhat):=\sum_{i=1}^n \left( g(\bPhi_i\qhat) \right)$ is the nonlinear component of the reduced Hamiltonian with $\bPhi_i$ denoting}} the $i$th row of $\bPhi$. While the nonlinear Hamiltonian ROM~\eqref{eq:ham_rom} is guaranteed to preserve the Hamiltonian structure, the computational cost of evaluating its vector field still scales with the FOM dimension $n$.  
\begin{remark} 
{\Ra{Since this work is mainly focused on nonlinear conservative FOMs of the form~\eqref{eq:cons_fom}, we find that the using the cotangent lift basis provides accurate approximate solutions to the FOM. For problems where the cotangent lift basis is not useful or not available, one avenue to consider would be to use the variationally consistent model reduction approach from~\cite{gruber2025variationally} which is compatible with any linear orthonormal basis. However, the energy-conserving property of the lifted ROMs no longer holds with this approach. Extension of the proposed structure-preserving lifting approach for problems where the contangent lift basis is not useful is non-trivial and left for future work.}}
\end{remark}
\subsection{Structure-preserving hyper-reduction for Hamiltonian systems}
\label{sec:spdeim}
A gradient-preserving DEIM strategy for Hamiltonian systems has been presented recently in~\cite{pagliantini2023gradient} where the authors first map the high-dimensional nonlinear Hamiltonian gradient into the reduced space via symplectic projection and then approximate nonlinear components of the reduced Hamiltonian gradient via a suitable DEIM projection of the Jacobian of the nonlinear function $\f_{\text{non}}$ in~\eqref{eq:cons_fom}. 
{\Ra{As a first step, the nonlinear component of the reduced Hamiltonian in~\eqref{eq:ham_red} is written as $\widehat{H}_{\text{non}}(\qhat)=\textbf{1}^\top\widehat{\mathbf h}_{\text{non}}(\bPhi\qhat)$ where $\textbf{1}\in \real^{n}$ is a constant vector with $1$ in each entry and $\widehat{\mathbf h}_{\text{non}}(\bPhi\qhat):=[g(\bPhi_1\qhat), \cdots, g(\bPhi_n\qhat)]^\top$. Based on this decomposition, the gradient of the nonlinear component of the reduced Hamiltonian can be written as
\begin{equation*}
\nabla \widehat{H}_{\text{non}}(\qhat)=\bPhi^\top \D\widehat{\mathbf h}_{\text{non}}^\top(\bPhi\qhat)\textbf{1},
\end{equation*}
where $\D\widehat{\mathbf h}_{\text{non}}(\bPhi\qhat) \in \real^{n \times n}$ is the Jacobian of $\widehat{\mathbf h}_{\text{non}}(\qhat)$.
The key idea is to use discrete empirical interpolation to approximate the reduced Jacobian term $\D\widehat{\mathbf h}_{\text{non}}(\bPhi\qhat)\bPhi$ as follows
\begin{equation}
\D\widehat{\mathbf h}_{\text{non}}(\bPhi\qhat)\bPhi\approx \V_{\text{DEIM}}(\mathbb P^\top \V_{\text{DEIM}})^{-1} \mathbb P^\top \D\widehat{\mathbf h}_{\text{non}}(\bPhi\qhat), 
\label{eq:spdeim}
\end{equation}
where $\V_{\text{DEIM}} \in \real^{n \times m}$ is the DEIM basis and $\mathbb P \in \real^{d \times m}$ is the corresponding selection matrix, see~\cite{chaturantabut2010nonlinear} for more details. The DEIM basis $\V_{\text{DEIM}}$ in~\eqref{eq:spdeim} is obtained from reduced Jacobian snapshots via SVD. With the reduced Jacobian term approximated as in~\eqref{eq:spdeim}, the governing equations for the resulting $2r-$dimensional hyper-reduced Hamiltonian ROM are
\begin{equation}
\dot{\yhat}=\Jr\nabla \widehat{H}_{\text{spDEIM}}(\yhat),
\label{eq:ham_rom_spdeim}
\end{equation}
where the hyper-reduced Hamiltonian is defined as 
\begin{equation*}
\widehat{H}_{\text{spDEIM}}(\yhat):=\frac{1}{2} \phat^\top \phat -\frac{1}{2}\qhat^\top\left(\V^\top\D\V\right) \qhat + \textbf{1}^\top \V_{\text{DEIM}}(\mathbb P^\top \V_{\text{DEIM}})^{-1} \mathbb P^\top\widehat{\mathbf{h}}_{\text{non}}(\bPhi\qhat).
\end{equation*}
}}This gradient-preserving DEIM strategy ensures the conservation of the FOM Hamiltonian in an asymptotic sense at a computational cost independent of the FOM dimension $n$. However, the computation of the structure-preserving DEIM basis in this approach is computationally demanding in the offline stage, especially for nonlinear conservative FOMs of multi-dimensional PDEs with parametric dependence.
\subsection{Nonlinear model reduction via lifting transformations}
\label{sec:lifting}
We review the {\Rc{standard}} lifting approach from~\cite{kramer2019nonlinear} {\Rc{to derive}} quadratic ROMs of general nonlinear FOMs. The lifting approach avoids the hyper-reduction step by first transforming nonlinear FOMs into quadratic models and then projecting the lifted FOM operators onto a reduced space. In the following, the notation $\otimes$ denotes the Kronecker product of matrices or vectors. We begin by defining a polynomialization for a general nonlinear FOM. 
 \begin{definition}[\textbf{Polynomialization and Quadratization}~\cite{bychkov2024exact}] Consider a system of nonlinear ODEs
\begin{equation}
\dot{\y}=\f(\y),
\label{eq:gen_non}
\end{equation}
where $\y \in \real^n$ is the state vector and $\f(\y)=[f_{1}(\y), \cdots, f_{n}(\y)]^\top$ is an $n-$dimensional vector of real-valued functions. Then an $\ell-$dimensional vector of new variables 
\begin{equation}
\w=[w_1(\y),\cdots,w_{\ell}(\y)]^\top
\end{equation}
is said to be a polynomialization of~\eqref{eq:gen_non} if there exist polynomial vectors $\bar{\f}(\y,\w)$ and $\bar{\f}_{{\textup{aux}}}(\y,\w)$  of dimension $n$ and $\ell$, respectively, such that
\begin{equation*}
\dot{\y}=\bar{\f}(\y,\w) \quad and \quad \dot{\w}=\bar{\f}_{{\textup{aux}}}(\y,\w),
\end{equation*}
for each $\y$ solving ~\eqref{eq:gen_non}. Similarly, $\w$ is said to be a quadratization of~\eqref{eq:gen_non} if all the entries in both $\bar{\f}(\y,\w)$ and $\bar{\f}_{{\textup{aux}}}(\y,\w)$ are polynomial of total degree at most two.

\end{definition}

For nonlinear model reduction, the idea of lifting is to find a quadratization $\w \in \real^{k \cdot n}$ of 
the nonlinear FOM~\eqref{eq:gen_non} in terms of $k$ additional variables for some positive integer $k\in \mathbb{Z}^+$ and  then employ a transformation $\tau: \real^{n} \to \real^{\bar{n}}$ with $\bar{n}=(k+1)\cdot n$  that transforms the original nonlinear system with state vector $\y\in \real^n$ into an equivalent lifted system with augmented state vector $\ybar=[\y^\top,\w^\top]^\top \in \real^{\bar{n}}$ yet quadratic dynamics, i.e., 
\begin{equation} 
\ydot=\f(\y) \qquad \xrightarrow{\ybar=\tau(\y)} \qquad \dot{\ybar}=\bar{\A}\ybar + \bar{\B}(\ybar \otimes \ybar),
\label{eq:lifted_FOM}
\end{equation}
where $\ybar\in \real^{\nbar}$ is the lifted state vector, and $\bar{\A} \in \real^{\nbar \times\nbar}$ and $\bar{\B}\in \real^{\nbar \times\nbar^2}$ are the linear and quadratic FOM operators in the lifted setting, respectively. To reduce the lifted FOM~\eqref{eq:lifted_FOM} with quadratic dynamics, we approximate the lifted state as $\ybar \approx \bar{\V}\ybar_r$ with POD basis matrix $\bar{\V} \in \real^{\nbar \times \bar{r}}$ and then perform a standard Galerkin projection to derive the quadratic ROM
\begin{equation}
\dot{\ybar}_r=\bar{\A}_r\ybar_r + \bar{\B}_r(\ybar_r \otimes \ybar_r),
\label{eq:std_lifting}
\end{equation}
where the reduced operators $\bar{\A}_r:=\bar{\V}^\top\bar{\A}\bar{\V} \in \real^{\bar{r} \times\bar{r}}$ and $\bar{\B}_r:=\bar{\V}^\top\bar{\B}(\bar{\V}\otimes \bar{\V})\in \real^{\bar{r} \times \bar{r}^2}$ can be computed via projection of the lifted FOM operators onto the low-dimensional basis $\bar{\V}$.  

A key advantage of the lifting approach is that the reduced operators can be precomputed efficiently in the offline phase once the POD basis matrix $\bar{\V}$ is chosen. Thus, the POD-Galerkin ROM of the lifted FOM yields a quadratic ROM without any additional hyper-reduction. Although the lifting approach provides an efficient online-offline decomposition by eliminating non-polynomial nonlinearities, it does not take the additional qualitative features like conservation laws into account. As a result, the POD-Galerkin ROM~\eqref{eq:std_lifting} is not guaranteed to be energy-conserving for nonlinear FOMs of the form~\eqref{eq:cons_fom}. 

\begin{remark}{\Ra{
In contrast to the straightforward construction of the linear ROM operator $\bar{\A}_r$, the quadratic ROM operator $\bar{\B}_r$ needs to be computed carefully as forming $\bar{\V} \otimes \bar{\V} \in \real^{\nbar^2 \times \bar{r}^2}$ can be computationally prohibitive even for moderately-sized FOMs. We follow~\cite{benner2015two} to efficiently construct $\bar{\B}_r$ without explicitly forming $\bar{\V} \otimes \bar{\V}$. Using properties of tensor multiplication~\cite{kolda2009tensor}, a tensor representation of the quadratic ROM operator $\bar{\mathcal B} \in \real^{\bar r \times \bar r \times \bar r}$ can be constructed in three steps: i) compute $\bar{\mathcal B}_1 \in \real^{\bar{r} \times \nbar \times  \nbar}$ via $\bar{\B}_1^{(1)}=\bar{\V}^\top\bar{\B}^{(1)}$ where $\bar{\B}_1^{(1)}$ and $\bar{\B}^{(1)}$ are the mode-$1$ matricizations of $\bar{\mathcal B}_1$ and $\bar{\mathcal B}$, respectively, ii) compute $\bar{\mathcal B}_2 \in \real^{\bar{r} \times \bar{r} \times  \nbar}$ via $\bar{\B}_2^{(2)}=\bar{\V}^\top \bar{\B}_1^{(2)}$ where $\bar{\B}_2^{(2)}$ is the mode-$2$ matricization of $\bar{\mathcal B}_2$, and iii) compute $\bar{\mathcal B}_r \in \real^{\bar{r} \times \bar{r} \times \bar{r}}$ via $\bar{\B}_r^{(3)}=\bar{\V}^\top \bar{\B}_2^{(3)}$ where $\bar{\B}_r^{(3)}$ is the mode-$3$ matricization of $\bar{\mathcal B}_r$. After the final step, the tensor representation of the quadratic ROM operator $\bar{\mathcal B}_r \in \real^{\bar{r}\times \bar{r} \times \bar{r}}$ can be stored in the matrix form as $\bar{\B}_r\in \real^{\bar{r} \times \bar{r}^2}$ where $\bar{\B}_r$ is the mode-$1$ matricization of $\bar{\mathcal B}_r$.}}
\end{remark}
\section{Structure-preserving lifting via energy quadratization}
\label{sec:method}
In Section~\ref{sec:motivation} we first motivate the need for structure-preserving lifting by demonstrating how the standard lifting approach leads to quadratic ROMs that do not conserve the lifted FOM energy. In Section~\ref{sec:splifting} we present a novel energy-quadratization strategy for finding lifting transformations that lead to quadratic ROMs with quadratic energy in the lifted setting. We also show that under certain mild assumptions, the proposed quadratic ROMs conserve the lifted FOM energy exactly. In Section~\ref{sec:comp} we compare the accuracy and the energy error of quadratic ROMs obtained via the proposed structure-preserving lifting approach and the standard lifting approach.  In Section~\ref{sec:offline} we discuss the offline computational costs involved in deriving the lifted quadratic ROMs and compare it against the state-of-the-art structure-preserving DEIM method~\cite{pagliantini2023gradient}. 
\subsection{Motivational example}
\label{sec:motivation}
 A common strategy to find a lifting map is to introduce auxiliary variables for non-quadratic terms in the governing equations and then augment the FOM with evolution equations for the auxiliary variables. However, such an approach is not guaranteed to be structure-preserving for the nonlinear conservative FOMs considered herein. This is best understood with an example. Consider the one-dimensional sine-Gordon equation
\begin{equation}
\frac{\partial^2 \phi(x,t)}{\partial t^2}=\frac{\partial^2 \phi(x,t)}{\partial x^2}-\sin(\phi(x,t))
 \label{eq:sg_pde}
\end{equation}
that conserves the nonlinear energy
\begin{equation}  
\mathcal E[\phi(\x,t)]=\int  \left[ \frac{1}{2} \left(\frac{\partial \phi(x,t)}{\partial t}\right)^2 + \frac{1}{2} \left(\frac{\partial \phi(x,t)}{\partial x}\right)^2 +(1-\cos ( \phi(x,t) ) )\right] \ {\dd} x.
\label{eq:hamf_sg}
\end{equation}
By defining $q(x,t)=\phi(x,t)$ and $p(x,t)=\frac{\partial \phi(x,t)}{\partial t}$,  we rewrite the nonlinear conservative PDE~\eqref{eq:sg_pde} as
\begin{equation}
\frac{\partial q(x,t)}{\partial t} =p(x,t), \qquad \qquad
\frac{\partial p(x,t)}{\partial t} =\frac{\partial^2 q(x,t)}{\partial x^2} - \sin(q(x,t)).
\label{eq:sg_first}
\end{equation}
We discretize equation~\eqref{eq:sg_first} in space using $n$ equally spaced grid points to derive nonlinear conservative FOM 
\begin{equation} 
    \dot{\q}=\p, \quad
    \dot{\p}
    =\D\q - \sin(\q),
 \label{eq:sg_cons}
\end{equation}
which conserves the FOM energy
\begin{equation}
E( \q,\p)=\frac{1}{2}\p^\top\p - \frac{1}{2}\q^\top \D\q + \sum^n_{i=1} (1-\cos (q_i)),
\label{eq:sg_1d_fom}
\end{equation}
where $q_i:={\Rb{q(x_i,t)}}$ for $i=1,\ldots,n$. 

Given this nonlinear FOM with a sinusoidal nonlinearity, we introduce an auxiliary variable for the non-quadratic term, i.e., $\w_1=\sin(\q)$, which upon replacement, makes the original dynamics~\eqref{eq:sg_cons} quadratic. The evolution equation for the auxiliary variable is $ \dot{\w}_1=\cos(\q) \odot \p$, where the notation $\odot$ denotes the (Hadamard) component-wise product of two vectors. Since the evolution equation for the first auxiliary variable is non-polynomial in the new state $[\q^\top,\p^\top,\w_1^\top]^\top$, we introduce another auxiliary variable $\w_2=\cos(\q)$ that
transforms the nonlinear conservative FOM with $2n$ variables into a quadratic lifted FOM with $4n$ variables
\begin{align} 
    \dot{\q}&=\p, \nonumber \\
    \dot{\p}&=\D\q - \w_1, \label{eq:lifted_fom_motivational} \\
    \dot{\w}_1&=\w_2 \odot \p, \nonumber \\
    \dot{\w}_2&= - \w_1\odot \p. \nonumber
    \end{align}
Since no approximations have been made, this lifted FOM is equivalent to the nonlinear conservative FOM~\eqref{eq:sg_cons}. Therefore, it is straightforward to check that the lifted FOM conserves the total system energy in the lifted setting, i.e., $\frac{\text{d}}{{\dd}t}E_{\text{lift}}(\q,\p,\w_1,\w_2)=0$ where the lifted FOM energy is 
\begin{equation}
E_{ \text{lift}}(\q,\p,\w_1,\w_2)=\frac{1}{2}\p^\top\p - \frac{1}{2}\q^\top \D\q + \sum^n_{i=1} (1-w_{2,i} ). 
\label{eq:elift_standard}
\end{equation}
Thus, we transformed the $2n-$dimensional nonlinear conservative FOM into a $4n-$dimensional quadratic lifted FOM with an invariant of the motion. 

We note that the lifted FOM energy function in equation~\eqref{eq:elift_standard} possesses a linear term which is atypical in energy functions of conservative PDEs. In the subsequent analysis, we observe that this linear term contributes to a residual error term in the energy conservation equation \textit{at the reduced level}. To derive the quadratic ROM of the lifted FOM~\eqref{eq:lifted_fom_motivational}, we use a block-diagonal projection matrix $\bar{\V} \in \real^{4n \times 4r}$ that preserves the coupling structure
\begin{equation*}
\bar{\V} =\text{blkdiag}(\bPhi,\bPhi,{\Ra{\V_{1}}},{\Ra{\V_{2}}})\in \real^{4n\times 4r},
\end{equation*}
where $\bPhi$ is the PSD basis matrix computed using the cotangent lift algorithm, and ${\Ra{ \V_{1}}}$ and ${\Ra{ \V_{2}}}$ are the POD basis matrices that contain as columns POD basis vectors for $\w_1$ and $\w_2$, respectively. We approximate the lifted state $\ybar\approx \bar{\V} \ybar_r$ where $\ybar_r\in \real^{4r}$ is the reduced state with $r\ll n$. Substituting this approximation into the lifted FOM and using the standard POD Galerkin projection yields the quadratic ROM of the lifted system
\begin{align*} 
    \dot{\qhat}&=\phat, \\
    \dot{\phat}&=\Dhat\qhat - (\bPhi^\top{\Ra{ \V_{1}}})\what_1, \\
    \dot{\what}_1&={\Ra{ \V_{1}}}^\top \left( {\Ra{ \V_{2}}}\what_2 \odot \bPhi\phat\right), \\
        \dot{\what}_2&=-{\Ra{ \V_{2}}}^\top \left( {\Ra{ \V_{1}}}\what_1 \odot \bPhi\phat\right),
    \end{align*}
where $\Dhat:=(\bPhi^\top\D \bPhi) \in \real^{r \times r}$. Substituting the lifted FOM state approximations from the governing equations of the quadratic ROM into the time derivative of the lifted energy expression yields
\begin{align} 
    \frac{\dd}{\dd t}E_{\text{lift}}(\bPhi\qhat,\bPhi\phat,{\Ra{ \V_{1}}}\what_1,{\Ra{ \V_{2}}}\what_2)
    &= -  \what^\top_1\V_{1}^\top  \bPhi\phat  + \what^\top_1\V_{1}^\top \left({\Ra{ \V_{2}}}{\Ra{ \V_{2}}}^\top \right) \bPhi\phat \nonumber \\
    &=\what^\top_1{\Ra{ \V_{1}}}^\top \left({\Ra{ \V_{2}}}{\Ra{ \V_{2}}}^\top -\In \right) \bPhi\phat \nonumber \\
      & \neq 0,
       \label{eq:residual}
        \end{align}
 where  we use $\left({\Ra{ \V_{2}}}{\Ra{ \V_{2}}}^\top -\In \right)\neq 0$ for any $r < n$. This example illustrates that the standard lifting approach combined with POD model reduction is not guaranteed to yield energy-conserving ROMs.
\subsection{Structure-preserving lifting {\Rc{and model reduction}} via energy quadratization}
\label{sec:splifting}
Our goal is to transform the nonlinear conservative FOM~\eqref{eq:cons_fom} to an equivalent lifted quadratic system that after Galerkin projection leads to a structure-preserving quadratic ROM. Since lifting transformations for a general nonlinear system are not unique, there is an opportunity to identify a lifting transformation that leads to a quadratic lifted ROM that conserves the lifted FOM energy. We propose a structure-preserving lifting approach based on \textit{energy quadratization}, which is summarized in {\Rd{five}} steps:
\begin{enumerate}
\item Define an auxiliary variable that transforms the nonlinear FOM energy~\eqref{eq:ed_fom} into a quadratic lifted FOM energy in terms of the augmented state variables. {\Rd{Specifically, the first auxiliary variable $\w_1$ is defined by 
\begin{equation}
 \w_1^\top\w_1=\kappa^2 g(\q), 
\end{equation}
where $\kappa \in \real$ is a free scalar~parameter.}}
\item {\Rd{Compute the evolution equation for $\w_1$, and if the time evolution equations for $\{\q,\p,\w_1\}$ are not quadratic in terms of $\{\q,\p,\w_1\}$, then use knowledge about the nonlinear term $f_{\text{non}}(\q)$ in~\eqref{eq:cons_fom} to introduce the second auxiliary variable $\w_2=\frac{f_{\text{non}}(\q)}{\kapbar \w_1}$ with another free scalar parameter $\kapbar \in \real$.}}
\item {\Rd{Compute the evolution equation for the second auxiliary variable}}, and if it is nonlinear then introduce additional auxiliary variable(s) to ensure that dynamics for the lifted system (including evolution equations for all auxiliary variables) are quadratic.
\item Choose a basis that preserves the coupling structure of the lifted FOM in the reduced setting. 
\item {\Rc{Project the lifted FOM operators onto the reduced subspace to derive structure-preserving quadratic ROMs that conserve the lifted FOM energy exactly.}}
\end{enumerate}
We now present a theoretical result that shows that the proposed energy-quadratization strategy yields energy-conserving quadratic ROMs of nonlinear FOMs obtained via spatial discretization of conservative PDEs of the form~\eqref{eq:pde}. For this, we make the following two assumptions.

\begin{assumption}
\label{ass1}
The nonlinear potential energy component $g(q)$ in equation~\eqref{eq:ed_fom} is nonnegative.
\end{assumption}
\begin{assumption}
\label{ass2}
The proposed energy-quadratization strategy yields a quadratization of the nonlinear dynamics~\eqref{eq:cons_fom}{\Ra{ for nonlinear conservative PDEs with non-polynomial nonlinearities in $g(q)$}}.
\end{assumption}
There is over a century of theory on the existence of quadratizations for finite-dimensional systems, which we recently surveyed in~\cite{KraPog_survey_quadratization_2025}. This theory guarantees that any autonomous polynomial ODE can be quadratized with a finite number of auxiliary variables. However, no such theory exists for systems with non-polynomial nonlinearities. Therefore, we need
Assumption~\ref{ass2} which ensures the existence of a quadratization with a finite number of auxiliary variables. While we do not know of any theoretical result guaranteeing that Assumption 2 holds for arbitrary non-polynomial nonlinearities in $g(q)$, we have empirically observed that the proposed approach leads to a lifted quadratic FOM {\Rd{with a finite number of auxiliary variables for a variety of nonlinear conservative PDEs of the form~\eqref{eq:pde},  including those with non-polynomial nonlinearities such as exponential or sinusoidal terms.}}
\begin{theorem}
Consider a nonlinear conservative FOM~\eqref{eq:cons_fom} for which Assumptions~\ref{ass1}-\ref{ass2} hold. Then,  the proposed energy-quadratization strategy (Steps $1-3$ above) combined with POD model reduction {\Rd{(Steps $4-5$ above)}} yields quadratic ROMs that conserve the lifted FOM energy, i.e., $\frac{\dd}{\dd t}\left( E_{{\textup{lift}}} (\bar{\V}\bar{\y}_r) \right)=0$.
\label{theorem}
\end{theorem}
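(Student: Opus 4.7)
The plan is to exploit the key structural consequence of Step~1 of the energy-quadratization strategy: unlike the standard lifting illustrated in Section~\ref{sec:motivation}, the lifted FOM energy here becomes a \emph{pure} quadratic form in the augmented state, with no linear terms in the auxiliary variables. This is precisely the feature that fails in~\eqref{eq:elift_standard}--\eqref{eq:residual}, where the linear term $\sum(1-w_{2,i})$ in the energy produces the nonvanishing residual involving $\V_2\V_2^\top-\In$.

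Concretely, I would first record that after Step~1 the lifted FOM energy takes the form
\begin{equation*}
E_{\text{lift}}(\bar{\y})=\tfrac{1}{2}\p^\top\p-\tfrac{1}{2}\q^\top\D\q+\tfrac{1}{\kappa^2}\w_1^\top\w_1,
\end{equation*}
which is well-defined by Assumption~\ref{ass1} (so that $\w_1^\top\w_1=\kappa^2 g(\q)\ge 0$) and independent of any further auxiliary variables $\w_2,\w_3,\ldots$ whose existence is ensured by Assumption~\ref{ass2}. Using the factorization $\fn=\kapbar\,\w_1\odot\w_2$ from Step~2 and differentiating $\w_1^\top\w_1=\kappa^2 g(\q)$ componentwise in time, one obtains the universal structure $\dot{\q}=\p$, $\dot{\p}=\D\q-\kapbar\,\w_1\odot\w_2$, and $\dot{\w}_1=\tfrac{\kappa^2\kapbar}{2}\,\w_2\odot\p$; the evolution equations for $\w_2,\w_3,\ldots$ never enter $\dot E_{\text{lift}}$. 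Applying the Galerkin projection from Steps~4--5 with the block-diagonal orthonormal basis $\bar{\V}=\text{blkdiag}(\bPhi,\bPhi,\V_1,\V_2,\ldots)$, the reduced energy collapses cleanly to $E_{\text{lift}}(\bar{\V}\bar{\y}_r)=\tfrac{1}{2}\phat^\top\phat-\tfrac{1}{2}\qhat^\top\Dhat\qhat+\tfrac{1}{\kappa^2}\what_1^\top\what_1$, with no residual $\V_i\V_i^\top-\In$ factors. Differentiating this expression along the projected dynamics, the $\q$--$\p$ coupling vanishes by symmetry of $\Dhat=\bPhi^\top\D\bPhi$, and the remaining cross terms
\begin{equation*}
-\kapbar(\bPhi\phat)^\top(\V_1\what_1\odot\V_2\what_2)+\kapbar(\V_1\what_1)^\top(\V_2\what_2\odot\bPhi\phat)
\end{equation*}
cancel by the cyclic Hadamard identity $a^\top(b\odot c)=b^\top(c\odot a)$, which crucially does \emph{not} require $\V_i\V_i^\top=\In$. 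This is the precise point where the proposed strategy succeeds while the standard one fails.

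The main obstacle I anticipate is establishing the universal factorization $\fn=\kapbar\,\w_1\odot\w_2$ and the form of $\dot{\w}_1$ in full generality. For non-polynomial nonlinearities, Step~3 typically introduces additional auxiliary variables $\w_3,\w_4,\ldots$ to render $\dot{\w}_2$ (and subsequent equations) quadratic; fortunately, these extra variables do not enter $E_{\text{lift}}$ and therefore cannot contribute to $\dot E_{\text{lift}}$, so the cancellation argument above remains intact regardless of how many auxiliary variables are ultimately needed. Consequently, Assumption~\ref{ass2} is invoked only to guarantee that the lifted FOM exists as a finite-dimensional quadratic system, from which the conservation claim follows.
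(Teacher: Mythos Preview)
Your proposal is correct and follows essentially the same approach as the paper's proof: derive the universal lifted structure $\dot{\q}=\p$, $\dot{\p}=\D\q-\kapbar\,\w_1\odot\w_2$, $\dot{\w}_1=\tfrac{\kappa^2\kapbar}{2}\w_2\odot\p$, note that the quadratic lifted energy depends only on $(\q,\p,\w_1)$, project with the block-diagonal orthonormal basis, and verify that $\tfrac{\dd}{\dd t}E_{\text{lift}}(\bar{\V}\bar{\y}_r)$ reduces to the two Hadamard terms you wrote, which cancel by the identity $a^\top(b\odot c)=b^\top(c\odot a)$. Your explicit identification of this identity and your remark that the cancellation does not require $\V_i\V_i^\top=\In$ (in contrast to~\eqref{eq:residual}) are helpful clarifications the paper leaves implicit.
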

\begin{proof} 
Given a nonlinear conservative FOM of the form~\eqref{eq:cons_fom} with nonlinear potential energy component $g(q)$ satisfying the nonnegativity condition from Assumption~\ref{ass1}, we follow the proposed energy-quadratization strategy to {\Rd{introduce the first auxiliary variable $\w_1$ defined by $\w_1^\top\w_1=\kappa^2 g(\q)$, where $\kappa \in \real$ is a free scalar~parameter. This specific choice for the first auxiliary variable quadratizes the nonlinear component in the FOM energy, i.e., $\sum_{i=1}^n(g(q_i))=\frac{1}{\kappa^2}\w_1^\top\w_1$. If the first auxiliary variable does not quadratize the time evolution equations for $\{\q,\p,\w_1\}$ in terms of $\{\q,\p,\w_1\}$, we use knowledge about the nonlinear component of the vector field $f_{\text{non}}(\q)$ in~\eqref{eq:cons_fom} to introduce the second auxiliary variable $\w_2=\frac{f_{\text{non}}(\q)}{\kapbar \w_1}$ with another free scalar parameter $\kapbar \in \real$. This second auxiliary variable quadratizes the nonlinear component of the space-discretized vector field in terms of the first two auxiliary variables, i.e., $f_{\text{non}}(\q)=\kapbar \w_1\odot \w_2$.}} These first two auxiliary variables {\Rd{also}} ensure that the time evolution equations for $\{\q,\p,\w_1\}$ are quadratic in terms of $\{\q, \p, \w_1, \w_2\}$, independent of the form of the nonlinearity in $g(q)$. We note that Assumption~\ref{ass2} ensures that the proposed energy-quadratization strategy yields a finite-dimensional quadratic lifted FOM. Since the auxiliary variables are only a function of $q$, the dynamics for these auxiliary variables will have the form $\dot{\w}_i=(\cdots)\odot \p$ for $i=2, \cdots, k$. Combining this observation with Assumption 2, the resulting quadratic FOM can be written as
\begin{align}
\label{eq:gen_lift_fom}
    \dot{\q}&=\p, \nonumber\\
    \dot{\p}&=\D\q - {\Rd{\kapbar}} \w_1\odot \w_2,\nonumber \\
    \dot{\w}_1&=\frac{{\Rd{\kappa^2\kapbar}}}{2}\w_2 \odot \p,\nonumber \\
    \dot{\w}_2&=\left(\alpha_2\q +\sum_{i=1}^{k}\alpha_{2,i}\w_i \right) \odot \p, \\
    & \qquad \qquad \vdots  \nonumber\\
    \dot{\w}_{k}&= \left(\alpha_k\q +\sum_{i=1}^{k}\alpha_{k,i}\w_i \right) \odot \p,\nonumber
\end{align}
where $\alpha_{2},\ldots,\alpha_k$ and $\alpha_{i,1}, \ldots, \alpha_{i,k}$ for $i=2,\ldots,k$ are real-valued constant coefficients such that the constants in the set $\pmb \alpha_{i}:=\{\alpha_{i},\alpha_{i,1}, \ldots, \alpha_{i,k}\}$ can not be all zero for $i=2,\ldots,k$.
This lifted FOM possesses a quadratic invariant in the lifted variables, i.e., 
\begin{equation}
\label{eq:gen_lift_energy}
E_{\text{lift}}(\q,\p,\w_1, \cdots,\w_{k})=\frac{1}{2}\p^\top\p - \frac{1}{2}\q^\top \D\q + {\Rd{\frac{1}{\kappa^2}}}\w_1^\top\w_1. 
\end{equation}
{\Ra{To construct the basis matrix for the augmented state vector in the lifted quadratic FOM~\eqref{eq:gen_lift_fom} without ever solving the lifted system, we first build position and momentum snapshot data matrices $\Q$ and $\Pp$ by simulating the nonlinear conservative FOM~\eqref{eq:cons_fom}. We then use the lifting transformations to construct the lifted snapshot data matrix $\mathbf W_i$ for each lifted variable.}}
We use a block-diagonal basis matrix $\bar{\V} \in \real^{\nbar\times \bar{r}}$ that preserves the coupling structure
\begin{equation*}
\bar{\V} =\text{blkdiag}(\bPhi,\bPhi,{\Ra{ \V_{1}}}, \cdots,{\Ra{ \V_{k}}})\in \real^{\nbar\times \bar{r}},
\end{equation*}
where $\bPhi$ is the PSD basis matrix computed using the cotangent lift algorithm, and $ {\Ra{ \V_{i}}}$ is the POD basis matrix that contains as columns the POD basis vectors for $\w_i$ for $i=1, \ldots, k$. A standard POD Galerkin projection yields
\begin{align} 
\label{eq:gen_lift_rom}
    \dot{\qhat}&=\phat,\nonumber \\
    \dot{\phat}&=\Dhat\qhat - {\Rd{\kapbar}}\bPhi^\top\left({\Ra{ \V_{1}}}\what_1 \odot {\Ra{ \V_{2}}}\what_2\right),\nonumber \\
    \dot{\what}_1&=\frac{{\Rd{\kappa^2\kapbar}}}{2}{\Ra{ \V_{1}}}^\top \left( {\Ra{ \V_{2}}}\what_2 \odot \bPhi\phat\right), \nonumber\\
        \dot{\what}_2&={\Ra{ \V_{2}}}^\top \left( \left( \alpha_2\bPhi\qhat +\sum_{i=1}^{k}\alpha_{2,i}{\Ra{ \V_{i}}}\what_i\right) \odot \bPhi \phat \right), \\
        & \qquad \qquad \qquad \vdots \nonumber \\
         \dot{\what}_{k}&={\Ra{ \V_{k}}}^\top \left(  \left( \alpha_k\bPhi\qhat +\sum_{i=1}^{k}\alpha_{k,i}{\Ra{ \V_{i}}}\what_i\right)\odot \bPhi \phat \right)\nonumber.
    \end{align}
    where $\Dhat=(\bPhi^\top\D \bPhi)$. We compute the time-derivative of the lifted FOM energy
\begin{align*} 
    \frac{\dd}{\dd t}E_{\text{lift}}(\bPhi\qhat,\bPhi\phat,{\Ra{ \V_{1}}}\what_1, \cdots,{\Ra{ \V_{k}}}\what_{k})&=\phat^\top\phatdot - (\Dhat\qhat)^\top\qhatdot +{\Rd{\frac{2}{\kappa^2}}} \what_1^\top\dot{\what}_1 \\
    &=-{\Rd{\kapbar}}\phat^\top\bPhi^\top\left( {\Ra{ \V_{1}}}\what_1 \odot  {\Ra{ \V_{2}}}\what_2\right) +{\Rd{\kapbar}}   \what_1^\top{\Ra{ \V_{1}}}^\top \left( {\Ra{ \V_{2}}}\what_2 \odot \bPhi\phat\right)\\
    &= 0. 
        \end{align*}
Thus, the quadratic ROM~\eqref{eq:gen_lift_rom}  obtained via the proposed energy-quadratization strategy conserves the lifted FOM energy~\eqref{eq:gen_lift_energy}.
\end{proof}
{\Ra{While we need to form the lifted FOM operators in~\eqref{eq:gen_lift_fom} to derive the structure-preserving quadratic ROM~\eqref{eq:gen_lift_rom}, we do not numerically solve the lifted FOM to construct the lifted snapshot data matrix. Instead, we use the lifting transformation $\tau_i$ for the $i$th auxiliary variable $\w_i$ to compute the lifted state at time $t_j$ as $\w_{i,j}=\tau_{i}(\q_j)$ and construct the corresponding lifted snapshot data matrix $\mathbf{W}_i=[\w_{i,1},\cdots,\w_{i,K}]\in \real^{n \times K}$ for $i=1,\cdots,k$.}}
\begin{remark}
The idea of introducing auxiliary variables to quadratize the system energy in terms of the augmented state variables has found success in the field of structure-preserving time integrators. Building on the Lagrange multiplier idea in~\cite{badia2011finite,guillen2013linear}, the authors in~\cite{yang2016linear} proposed the so called invariant energy quadratization (IEQ) approach for deriving computationally efficient energy-conserving integrators for phase field modeling applications. The recent successes of energy-conserving integrators based on the IEQ strategy and its modifications~\cite{shen2018scalar,shen2018convergence} have provided an alternative to structure-preserving time integrators that leverage the Hamiltonian structure. In a similar vein, the energy quadratization strategy proposed in this work can be viewed as providing an alternative to the symplectic model reduction approach in the context of structure-preserving model reduction of nonlinear conservative PDEs.
\end{remark}
\subsection{Comparison of structure-preserving and non-structure-preserving lifting on the sine-Gordon equation}
\label{sec:comp}
\subsubsection{Analysis}
To illustrate the theoretical result presented in Section~\ref{sec:splifting}, we revisit the sine-Gordon equation example from Section~\ref{sec:motivation}. {\Ra{ Since the nonlinear term $g(q_i)=1-\cos(q_i)$ in the nonlinear FOM energy~\eqref{eq:sg_1d_fom} satisfies the nonnegativity condition, we {\Rd{solve $ \w_1^\top\w_1=\kappa^2 g(\q)$ with $g(q)=1-\cos(q)$ and choose $\kappa=1/\sqrt{2}$ to obtain the first auxiliary variable $\w_1=\frac{1}{\sqrt{2}}\sqrt{1-\cos(\q)}=\sin(\q/2)$}}. Consequently, the nonlinear potential energy term in $\q$ in~\eqref{eq:sg_1d_fom} can be replaced by a quadratic term in terms of $\w_1$, i.e., $\sum^n_{i=1} (1-\cos (q_i))=2\w_1^\top\w_1$.}} The evolution equation for the auxiliary variable is $\dot{\w}_1=\frac{1}{2}\cos(\q/2)\odot \p$, which is non-polynomial in the augmented state $[\q^\top,\p^\top,\w_1^\top]^\top$. We introduce another auxiliary variable $\w_2={\Rd{\frac{\sin(\q)}{2\sin(\q/2)}}}=\cos(\q/2)$ {\Rd{with $\kapbar=2$}} that transforms the nonlinear conservative FOM of dimension $2n$ into the $4n-$dimensional quadratic lifted FOM
\begin{align}
    \dot{\q}&=\p, \nonumber\\
    \dot{\p}&=\D\q - 2\w_1\odot \w_2, \\
    \dot{\w}_1&=\frac{1}{2}\w_2 \odot \p,\nonumber\\
    \dot{\w}_2&= -\frac{1}{2} \w_1\odot \p, \nonumber
\end{align}
with a quadratic invariant in the lifted variables
\begin{equation}
E_{\text{lift}}(\q,\p,\w_1,\w_2)=\frac{1}{2}\p^\top\p - \frac{1}{2}\q^\top \D\q + 2\w_1^\top\w_1.
\label{eq:elift_sg}
\end{equation} 
We use a block-diagonal basis matrix $\bar{\V}  \in \real^{4n \times 4r}$ that preserves the coupling structure
\begin{equation*}
\bar{\V} =\text{blkdiag}(\bPhi,\bPhi,{\Ra{ \V_{1}}},{\Ra{ \V_{2}}})\in \real^{4n\times 4r},
\end{equation*}
where $\bPhi$ is the PSD basis matrix computed using the cotangent lift algorithm, and $ {\Ra{ \V_{1}}}$ and ${\Ra{ \V_{2}}}$ are the POD basis matrices that contain as columns POD basis vectors for $\w_1$ and $\w_2$, respectively. A standard POD Galerkin projection yields
\begin{align*} 
    \dot{\qhat}&=\phat, \\
    \dot{\phat}&=\Dhat\qhat - 2\bPhi^\top\left({\Ra{ \V_{1}}}\what_1 \odot {\Ra{ \V_{2}}}\what_2\right), \\
    \dot{\what}_1&=\frac{1}{2}{\Ra{ \V_{1}}}^\top \left( {\Ra{ \V_{2}}}\what_2 \odot \bPhi\phat\right), \\
        \dot{\what}_2&=-\frac{1}{2}{\Ra{ \V_{2}}}^\top \left( {\Ra{ \V_{1}}}\what_1 \odot \bPhi \phat \right),
    \end{align*}
    where $\Dhat=\bPhi^\top\D\bPhi \in \real^{r \times r}$. We compute the time-derivative of the lifted FOM energy
\begin{align} 
    \frac{\dd}{\dd t}E_{\text{lift}}(\bPhi\qhat,\bPhi\phat,{\Ra{ \V_{1}}}\what_1,{\Ra{ \V_{2}}}\what_2)&=\phat^\top\phatdot - (\Dhat\qhat)^\top\qhatdot +4 \what_1^\top\dot{\what}_1 \nonumber \\
    &=-2\phat^\top\bPhi^\top\left( {\Ra{ \V_{1}}}\what_1 \odot  {\Ra{ \V_{2}}}\what_2\right) +2   \what_1^\top{\Ra{ \V_{1}}}^\top \left( {\Ra{ \V_{2}}}\what_2 \odot \bPhi\phat\right) \label{eq:fom_energy_zero} \\
    &= 0. \nonumber
        \end{align}
In contrast to the quadratic ROM derived via the standard lifting approach in Section~\ref{sec:motivation}, the energy-quadratization strategy leads to a quadratic lifted ROM that conserves the lifted FOM energy exactly.
\begin{figure}[tbp]
\small
\captionsetup[subfigure]{oneside,margin={1.8cm,0 cm}}
\begin{subfigure}{.45\textwidth}
       \setlength\fheight{6 cm}
        \setlength\fwidth{\textwidth}
%
%
\begin{tikzpicture}

\begin{axis}[%
width=0.951\fheight,
height=0.536\fheight,
at={(0\fheight,0\fheight)},
scale only axis,
xmin=2,
xmax=20,
xlabel style={font=\color{white!15!black}},
xlabel={Reduced dimension $2r$},
ymode=log,
ymin=0.0001,
ymax=10,
yminorticks=true,
ylabel style={font=\color{white!15!black}},
ylabel={Relative state error in $q$},
axis background/.style={fill=white},
xmajorgrids,
ymajorgrids,
yminorgrids,
legend style={at={(0.1,1.25)}, anchor=south west, legend cell align=left, align=left, draw=white!15!black}
]
\addplot [color=green, dashed, line width=2.0pt, mark size=3.0pt, mark=triangle, mark options={solid, green}]
  table[row sep=crcr]{%
2	1.40832315143151\\
4	0.97600434536982\\
6	0.263596757633243\\
8	0.017713790417063\\
10	0.00291171191843248\\
12	0.00161987744907376\\
14	0.00124060868233179\\
16	0.00129756228360735\\
18	0.000958489512221317\\
20	0.000956566086761161\\
};
\addlegendentry{structure-preserving lifting}

\addplot [color=blue, dotted, line width=2.0pt, mark size=3.0pt, mark=o, mark options={solid, blue}]
  table[row sep=crcr]{%
2	1.37951883829635\\
4	1.68639435134735\\
6	0.63371749779811\\
8	0.994044036820677\\
10	0.0646022344944805\\
12	0.00725660231724073\\
14	0.00174385118773281\\
16	0.00125466119894856\\
18	0.00108365823271292\\
20	0.000946906644684891\\
};
\addlegendentry{standard lifting}

\end{axis}
\end{tikzpicture}%
\caption{Relative state error}
\label{fig:comp_state}
    \end{subfigure}
    \hspace{0.2cm}
    \begin{subfigure}{.45\textwidth}
           \setlength\fheight{6 cm}
           \setlength\fwidth{\textwidth}
\input{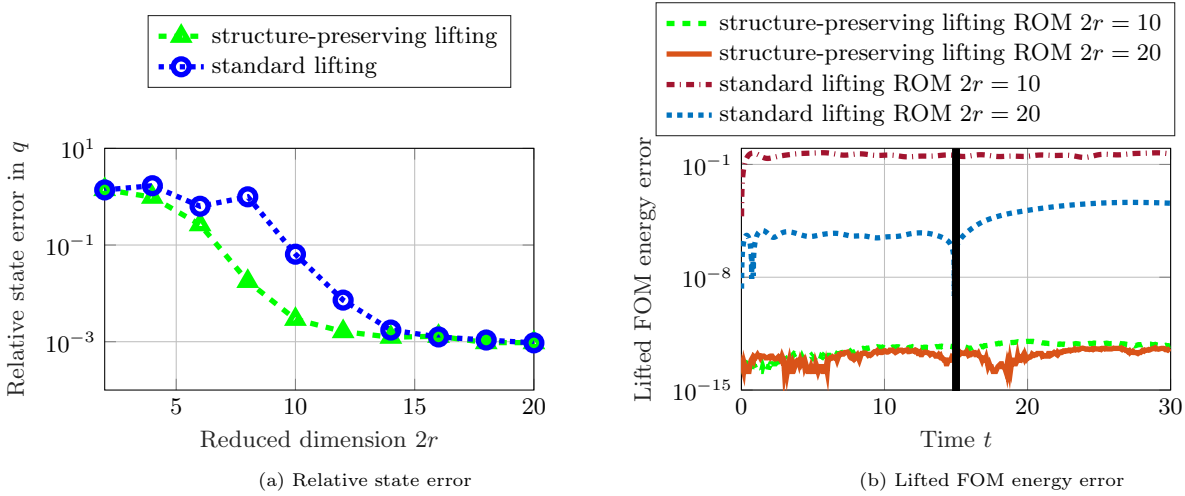}
\caption{Lifted FOM energy error}
\label{fig:comp_energy}
    \end{subfigure}
\caption{Comparative study for the one-dimensional sine-Gordon equation. Plot (a) shows that the proposed energy-quadratization strategy yields quadratic ROMs that achieve lower relative state error than the quadratic ROMs obtained using the standard lifting approach. The energy error comparison in plot (b) demonstrates that the quadratic ROMs derived using proposed structure-preserving lifting approach conserve the lifted FOM energy exactly.}
 \label{fig:comp}
\end{figure}
\subsubsection{Numerical simulations}
We demonstrate the advantages of using the proposed structure-preserving lifting approach over the standard lifting approach through a numerical study. We build a training dataset by simulating the nonlinear FOM~\eqref{eq:sg_cons} with $n=200$ from $t=0$ to $t=15$. We then compute a block-diagonal POD basis matrix for the lifted FOM state and then derive quadratic ROMs for both lifting approaches. To demonstrate the energy-conserving nature of the quadratic ROMs derived using the structure-preserving lifting approach, we numerically integrate both the quadratic ROMs using the implicit midpoint method which is a fully implicit energy-conserving integrator for quadratic vector fields.

We compare the relative state error and the lifted FOM energy error for the two lifting approaches in Figure~\ref{fig:comp}. The relative state error comparison in Figure~\ref{fig:comp_state} shows that the quadratic ROMs based on the structure-preserving lifting approach achieve higher accuracy than the standard lifting approach for $4\leq2r\leq14$. For ROMs of size $2r>14$, we observe that both lifting approaches yield similar relative state error which is a consequence of the fact that the residual energy error term in~\eqref{eq:residual} decreases as we increase the reduced dimension $r$. 

The energy-conserving nature of the energy-quadratization strategy is corroborated in Figure~\ref{fig:comp_energy} where quadratic ROMs of size $2r=10$ and $2r=20$ based on the structure-preserving lifting approach conserve the lifted FOM energy error close to machine precision. The quadratic ROMs based on the standard lifting approach, on the other hand, demonstrate substantially higher lifted FOM energy error despite using an energy-conserving numerical integrator. This provides numerical evidence of the fact that the standard lifting approach yields quadratic ROMs that do not conserve the lifted FOM energy. 
\subsection{Offline computational costs: Comparing structure-preserving lifting with spDEIM}
\label{sec:offline}
In this section we highlight the computational advantages of the proposed nonlinear model reduction approach based on structure-preserving lifting in the offline phase by comparing its offline computational costs with the spDEIM approach summarized in Section~\ref{sec:spmor}. For this comparison, we only focus on components that incur additional computational costs compared to the baseline PSD ROM (without spDEIM approximation).
\begin{enumerate}
\item \textit{spDEIM:} Compared to the baseline PSD ROM, the additional computational cost in the offline phase for this hyper-reduction method comes from the construction of the DEIM basis. {\Ra{Given $K$ snapshots from the nonlinear FOM simulation, }} the computation of the DEIM basis in the spDEIM approach requires singular vectors of a Jacobian snapshot data matrix of size $n \times rK$. The construction of this Jacobian snapshot data matrix and its SVD makes the offline phase computationally expensive, especially for nonlinear conservative FOMs with dimension $n >10^4$. Moreover, for parametric problems with $M$ training parameters, the size of the Jacobian snapshot data matrix becomes $n \times rMK$ which further increases the computational cost in the offline phase for spDEIM.  \\

\item \textit{Structure-preserving lifting:} The proposed structure-preserving lifting approach has two components that incur additional computational costs. The first component is building the POD basis vectors for the auxiliary variables that are used to derive the quadratic ROM operators. Consider a problem with FOM snapshot data matrices $\Q \in \real^{n \times K}$ and $\Pp \in \real^{n \times K}$ where $K$ is the total number of snapshots. We then apply the structure-preserving lifting map to each column of the FOM snapshot data matrix $\Q$ to obtain lifted snapshot data and build the snapshot data matrix for each of the $k$ auxiliary variables. Thus, building the block-diagonal POD basis matrix for the lifted state in the proposed approach requires building a lifted snapshot matrix $\textbf W_i \in \real^{n \times K}$ and then computing its SVD for $i=1, \cdots, k$.  The second component is the projection of the lifted FOM operators onto the reduced subspace. The first step in the three-step procedure summarized in Section~\ref{sec:lifting} involves the construction of a tensor with $\bar{r}\times \nbar^2$ entries which can be computationally prohibitive for high-dimensional FOMs. In this work, we exploit the highly sparse nature of the lifted FOM operators with only $\mathcal{O}(n)$ nonzero entries to compute $\bar{\V}^\top\B$ by multiplying rows of $\bar{\V}^\top$ only with columns that contain nonzero terms. This exploitation of the sparsity of the lifted FOM operator helps reduce the computational complexity from $\mathcal{O}(\bar{r}\times \nbar^2)$ to $\mathcal{O}(\bar{r}\times \nbar)$ which makes the computational cost of constructing the quadratic ROM operators in this second component negligible compared to the first component.
\end{enumerate}
In summary, the main additional computational cost in the offline phase for the structure-preserving lifting approach comes from building and computing the SVD of the lifted snapshot data matrices for $k$ auxiliary variables. Since most of the nonlinear conservative PDEs in science and engineering applications require only $k=1$ or $k=2$ auxiliary variables, the added computational costs in the offline stage for the structure-preserving lifting approach is substantially lower than the added computational costs for the spDEIM approach.

\section{Numerical results}
\label{sec:numerical}
In this section, we study the numerical performance of the proposed structure-preserving lifting approach for four nonlinear conservative PDEs with increasing level of complexity. {\Rc{Section~\ref{sec:practical}}} provides practical details about the numerical integrators used for simulating the ROMs. {\Rc{Section~\ref{sec:error} defines the error measures used in this section}}.  In Section~\ref{sec:exp}, we study the one-dimensional nonlinear wave equation with exponential nonlinearity. In Section~\ref{sec:sg_2d} we consider the two-dimensional sine-Gordon equation.  In Section~\ref{sec:kg_2d}, we demonstrate the proposed approach on the two-dimensional Klein-Gordon equation with parametric dependence. Finally, in Section~\ref{sec:kgz}, we consider a nonlinear conservative FOM with $960{,}000$ degrees of freedom to derive structure-preserving ROMs for the two-dimensional Klein-Gordon-Zakharov equations, a system of coupled PDEs with a nonlinear conservation law. This example from plasma physics demonstrates the wider applicability of the proposed approach for conservative PDEs that are not canonical Hamiltonian PDEs. 

All numerical experiments in Section~\ref{sec:exp}, Section~\ref{sec:sg_2d}, and  Section~\ref{sec:kg_2d} are implemented in MATLAB 2022a on a quad-core Intel i7 processor with 2.3 GHz and 32 GB memory. The numerical experiments for the two-dimensional Klein-Gordon-Zakharov equations in Section~\ref{sec:kgz} are implemented in MATLAB 2022b using compute nodes of the Triton Shared Computing Cluster~\cite{san2022triton} equipped with $8$ processing cores of Intel Xeon Platinum 64-core CPU at 2.9 GHz and 1 TB memory.
\subsection{Practical details about numerical time integrators}
\label{sec:practical}
For nonlinear ROMs obtained via PSD (with or without  spDEIM), we use the implicit midpoint rule for numerical time integration. The corresponding time-marching equations are
\begin{equation*}
    \frac{\yhat^{k+1}-\yhat^k}{\Delta t}=\widehat{\f}\left(\frac{\yhat^k + \yhat^{k+1}}{2}\right).
\end{equation*}
The implicit midpoint rule is a second-order symplectic integrator that exhibits bounded energy error for nonlinear Hamiltonian systems~\cite{hairer2006geometric}.  The magnitude of the maximum FOM energy error for ROM trajectories obtained via the implicit midpoint rule depends on the size of the fixed time step $\Delta t$. While it is possible to achieve exact FOM energy conservation using energy-conserving integrators like the average vector field method~\cite{celledoni2012preserving}, we prefer the implicit midpoint rule {\Rb{due to ease of implementation and marginally higher computational efficiency in the online stage.}} 

For quadratic ROMs obtained via the proposed structure-preserving lifting approach, we use Kahan's method for numerical time integration. Kahan's method is a second-order structure-preserving numerical integrator for quadratic vector fields, see~\cite{celledoni2012geometric} for more details about the geometric properties of Kahan's method. The corresponding time-marching equations are
\begin{equation}
    \frac{\yhat^{k+1}-\yhat^{k}}{\dt}=\widehat{\A} \left( \frac{\yhat^{k+1}+\yhat^{k}}{2}\right) + \widehat{\B} \left( \frac{\left( \yhat^{k+1} \otimes \yhat^{k} \right) + \left( \yhat^{k} \otimes \yhat^{k+1} \right) }{2}\right),
\end{equation} 
where $\Delta t$ is the fixed time step. In contrast to the implicit midpoint method used for integrating the quadratic ROMs in the comparative study in Section~\ref{sec:comp},  Kahan's method is not energy-conserving for quadratic vector fields, and as a result, the quadratic ROM trajectories obtained with Kahan's method do not conserve the lifted FOM energy exactly. Nevertheless, Kahan's method exploits the quadratic nature of dynamics to integrate the proposed quadratic ROMs in a computationally efficient manner while also providing lifted FOM state approximations with bounded lifted FOM energy error. 
\subsection{Reported error measures}
\label{sec:error}
The \emph{relative state error} in $q$ is computed in the entire training or testing intervals as  
\begin{equation}\label{eq:err_state}
 \text{Relative state error in } q =\frac{\lVert \Q- \bPhi\widehat{\Q} \rVert^2_F}{\lVert \Q \rVert^2_F},
\end{equation}
where $\widehat{\Q}=[\qhat_1, \cdots, \qhat_K] \in \real^{r \times K}$ is the ROM snapshot data obtained from the ROM simulations and $\bPhi\widehat{\Q}\in \real^{n \times K}$ is the approximation of the FOM snapshot data $\Q$.  We measure the common cost/accuracy tradeoff for ROMs using the \emph{efficacy} metric which is computed as
\begin{equation}\label{eq:eff}
{\text{Efficacy}}= \frac{1}{\text{relative state error in training data regime} \times \text{wall-clock time in seconds}} \ .
\end{equation}
In comparisons based on this metric, the model reduction approach with higher efficacy is considered advantageous. We note that trivial ROM solutions are excluded, and we only compute efficacy once a certain threshold of accuracy is achieved.  Finally, the \emph{FOM energy error} is computed as follows:
\begin{equation}\label{eq:err_fom}
  \text{FOM energy error}=  \left| E(\bPhi\qhat(t),\bPhi \phat(t)) -E(\bPhi\qhat(0),\bPhi \phat(0)) \right|,
\end{equation}
where $E(\bPhi\qhat(t),\bPhi \phat(t))$ is the FOM energy approximation obtained by evaluating the space-discretized nonlinear FOM energy $E_d$ for the FOM state approximation at time $t$. 
\subsection{Nonlinear wave equation with exponential nonlinearity}
\label{sec:exp}
The one-dimensional nonlinear wave equation with exponential nonlinearity considered here arises from the Johnson–Mehl–Avrami–Kolmogorov theory~\cite{avrami1939kinetics, johnson1939reaction, kolmogorov1937statistical} of nucleation and growth reactions for modeling the kinetics of phase change. Building on this theory from the 1930s, the author in~\cite{cahn1995time} proposed the time cone method for deriving an integral equation that characterized the nucleation and growth phenomenon of nuclei in a finite domain. The authors in~\cite{liu2014multiple} reduced the integral equation from the time cone method to a nonlinear hyperbolic equation where the exponential nonlinearity models the physical fact that the nucleation rate decreases with an increase in the number of nuclei, see~\cite{wang2015energy} for more details.
\subsubsection{PDE formulation and the corresponding nonlinear conservative FOM}
We consider the FOM setup from~\cite{li2020linearly}. Let $\Omega=(0,\pi) \subset \real$ be the spatial domain and consider the one-dimensional nonlinear wave equation
\begin{equation}
\frac{\partial^2 \phi(x,t)}{\partial t^2}=\frac{\partial^2 \phi(x,t)}{\partial x^2}+\exp(-\phi(x,t)),
\label{eq:exp_wave}
\end{equation}
with state $\phi(x,t)$ at spatial location $x \in \Omega$ and time $t \in (0,T]$. We consider homogenous Dirichelet boundary conditions
\begin{equation*}
\phi(0,t)=\phi(\pi,t)=0,
\end{equation*}
for $t \in (0,T]$. The initial conditions are
\begin{equation*}
\phi(x,0)=0.5x(\pi-x), \qquad \frac{\partial \phi}{\partial t} (x,0)=0, \qquad x \in [0,\pi].
\end{equation*}
We define $q(x,t):=\phi(x,t)$ and $p(x,t):=\frac{\partial \phi(x,t)}{\partial t}$ to
 rewrite the nonlinear wave equation with exponential nonlinearity~\eqref{eq:exp_wave} in first-order form  
\begin{equation}
\frac{\partial q (x,t)}{\partial t}=p(x,t), \qquad \frac{\partial p (x,t)}{\partial t}=\frac{\partial^2 q (x,t)}{\partial x^2} + \exp(-q (x,t)).
\end{equation}

We discretize the system of first-order PDEs using $n=200$ equally spaced grid points to derive the nonlinear conservative FOM
\begin{equation*} 
    \dot{\q}=\p, \qquad
    \dot{\p} =\D\q + \exp (-\q),
    \label{eq:exp_fom}
\end{equation*}
where $\D=\D^\top$ denotes the {\Ra{symmetric discrete Jacobian}} and the vector $ \exp (-\q) \in  \real^n$ contains as components the entry-wise exponential of the negative of the discretized state vector $\q$. The nonlinear conservative FOM conserves the space-discretized energy 
\begin{equation}
E(\q,\p)=\frac{1}{2} \p^\top \p -\frac{1}{2}\q^\top\D \q + \sum_{i=1}^n \left( \exp(-q_i) \right).
\label{eq:exp_ener_fom}
\end{equation}
\subsubsection{Structure-preserving lifting based on energy quadratization}
Based on {\Ra{the nonnegative nonlinear term $g(q)=\exp(-q)$}} in the nonlinear FOM energy expression~\eqref{eq:exp_ener_fom}, we {\Rd{solve $ \w_1^\top\w_1=\kappa^2 g(\q)$ with $g(q)=\exp(-q)$ and choose $\kappa=1$ to obtain the first auxiliary variable $\w_1=\exp(-\q/2)$, which}} quadratizes the nonlinear FOM energy in the lifted variables. {\Rd{Since the time evolution equations for $\{\q,\p,\w_1\}$ are quadratic in terms of $\{\q,\p,\w_1 \}$ in this example, we do not need to introduce additional auxiliary variables.}} The resulting lifted FOM  
\begin{align*} 
    \dot{\q}&=\p, \\
    \dot{\p}&=\D \q + {\Rd{\w_1}} \odot {\Rd{\w_1}}, \\
    {\Rd{\dot{\w}_1}}&=-\frac{1}{2}{\Rd{\w_1}} \odot \p ,
  \end{align*}
conserves the lifted FOM energy $E_{\text{lift}}(\q,\p,{\Rd{\w_1}})=\frac{1}{2}\p^\top\p - \frac{1}{2}\q^\top \D\q +{\Rd{\w_1}}^\top{\Rd{\w_1}} $. {\Ra{Thus, Assumptions 1 \& 2 of Theorem~\ref{theorem} are satisfied.}} For Galerkin projection, we consider a block-diagonal basis matrix of the form
\begin{equation*}
\bar{\V} =\text{blkdiag}(\bPhi,\bPhi,{\Rd{\V_1}})\in \real^{3n\times 3r},
\end{equation*}
where the PSD basis matrix $\bPhi \in \real^{n\times r}$ for $\q$ and $\p$ is computed using the cotangent lift algorithm and the POD basis matrix ${\Rd{\V_1}} \in \real^{n \times r}$ for the auxiliary variable ${\Rd{\w_1}}$ is computed via SVD of the lifted snapshot data matrix ${\Rd{\textbf W_1}} \in \real^{n \times r}$. The resulting POD-Galerkin ROM of the lifted FOM is
\begin{align*} 
    \dot{\qhat}&=\phat, \\
    \dot{\phat}&=\Dhat\qhat +\bPhi^\top\left(  {\Rd{\V_1\what_1}}  \odot   {\Rd{\V_1\what_1}} \right) , \\
    {\Rd{\dot{\what}_1}}&=-\frac{1}{2}{\Rd{\V_1}}^\top \left(  {\Rd{\V_1\what_1}}  \odot \bPhi\phat\right),
    \end{align*}
    where $\Dhat:=\bPhi^\top\D\bPhi \in \real^{r \times r}$. We substitute the lifted FOM state approximation into the lifted FOM energy expression and compute its time derivative
    \begin{align*} 
    \frac{\dd}{{\dd}t}E_{\text{lift}}(\bPhi\qhat,\bPhi\phat,{\Rd{\V_1\what_1}})&=\phat^\top(\Dhat\qhat+\bPhi^\top\left( {\Rd{\V_1\what_1}} \odot  {\Rd{\V_1\what_1}}\right) ) - \qhat^\top \Dhat\phat 
    +  2{\Rd{\what_1}}^\top\underbrace{\left( {\Rd{\V_1}}^\top {\Rd{\V_1}}\right)}_{\textbf{I}_r}{\Rd{\dot{\what}_1}} =0.
        \end{align*}        
Thus, the proposed energy-quadratization lifting strategy combined with POD-Galerkin yields a $3r-$dimensional quadratic ROM that is guaranteed to conserve the lifted FOM energy, {\Ra{see also Theorem~\ref{theorem}.}}
\subsubsection{Numerical results}
A key motivation for structure-preserving model reduction for nonlinear conservative PDEs is to derive stable ROMs that can provide accurate predictions outside the training dataset. In this study, the training dataset is built by integrating the FOM until time $t=10$ using the symplectic midpoint rule with $\dt=0.005$. For test data, we consider FOM snapshots from $t=10$ to $t=100$ to study the time extrapolation ability of quadratic ROMs derived via the proposed structure-preserving lifting approach. {\Rb{We simulate the ROMs until $t=100$, which is 900\% outside the training time interval to assess the proposed method's ability to capture the quasi-periodic behavior. }}

In Figure~\ref{fig:exp_1d_state}, we compare the relative state error~\eqref{eq:err_state} {\Ra{in $q$ and $p$}} for the training and test data regimes for different values of the reduced dimension. For comparison against the quadratic ROMs obtained via the proposed structure-preserving lifting approach, we consider structure-preserving PSD ROMs (without hyper-reduction) and spDEIM ROMs with $r_{\text{spDEIM}}=r$ and {\Rd{$r_{\text{spDEIM}}=2r$}}.  The comparison plots {\Ra{for $q$}} in Figure~\ref{fig:exp_state_train} and Figure~\ref{fig:exp_state_test} show that all three approaches yields ROMs with similar accuracy in both training and test data regimes. {\Rd{The relative state error comparison for $p$ in Figure~\ref{fig:exp_state_train} and Figure~\ref{fig:exp_state_test} shows that all ROMs provide inaccurate predictions, primarily due to the cotangent lift basis failing to provide accurate state approximations for $p$ in both training and test data regimes.  The quadratic ROMs obtained via the proposed structure-preserving lifting approach exhibit higher state error than the structure-preserving PSD ROMs (without hyper-reduction) and spDEIM ROMs.}}

We compare the computational efficiency of the structure-preserving lifting approach against the spDEIM approach with $r_{\text{spDEIM}}=r$ {\Rd{and $r_{\text{spDEIM}}=2r$}} through efficacy plots in Figure~\ref{fig:exp_1d_eff}. The $3r-$dimensional quadratic ROMs obtained via structure-preserving lifting achieve higher efficacy than the $2r-$dimensional nonlinear Hamiltonian ROMs obtained via spDEIM. Thus, compared to spDEIM, the proposed approach achieves similar state error performance at a lower computational cost in the online stage for this example. The FOM energy error plots in Figure~\ref{fig:exp_1d_ener} show that the all three approaches achieve bounded FOM energy error, {\Rd{with the PSD ROM achieving the lowest energy error. The quadratic ROM obtained via structure-preserving lifting achieves substantially lower energy error than the spDEIM ROMs with $r_{\text{spDEIM}}=r$ and $r_{\text{spDEIM}}=2r$ in both training and test data regimes.}} This ability to provide accurate and stable predictions 900\% outside the training data regime highlights a core advantage of the structure-preserving quadratic ROMs obtained via the structure-preserving lifting approach.
\begin{figure}[tbp]
\small
\captionsetup[subfigure]{oneside,margin={-2cm,0 cm}}
\begin{subfigure}{.23\textwidth}
       \setlength\fheight{3 cm}
        \setlength\fwidth{\textwidth}
\raisebox{8mm}{
%
%
\definecolor{mycolor1}{rgb}{1.00000,0.00000,1.00000}%
\begin{tikzpicture}

\begin{axis}[%
width=0.683\fheight,
height=0.65\fheight,
at={(0\fheight,0\fheight)},
scale only axis,
xmin=4,
xmax=20,
xlabel style={font=\color{white!15!black}},
xlabel={Reduced dimension $2r$},
ymode=log,
ymin=0.001,
ymax=0.1,
yminorticks=true,
ylabel style={font=\color{white!15!black}},
ylabel={Relative state error in $q$},
axis background/.style={fill=white},
xmajorgrids,
ymajorgrids,
legend style={draw=none, legend columns=-1},
legend style={at={(0.0,1.5)}, anchor=south west, legend cell align=left, align=left, draw=white!15!black},
legend style={font=\small}
]
\addplot [color=blue, dotted, line width=2.0pt, mark size=4.0pt, mark=x, mark options={solid, blue}]
  table[row sep=crcr]{%
4	0.0112030400233228\\
8	0.00986478175382966\\
12	0.00986269463646506\\
16	0.00985834428953639\\
20	0.00985187716174048\\
};
\addlegendentry{PSD}

\addplot [color=mycolor1, dashed, line width=2.0pt, mark size=4.0pt, mark=o, mark options={solid, mycolor1}]
  table[row sep=crcr]{%
4	0.0158688811002271\\
8	0.0103110498622526\\
12	0.0103805905294615\\
16	0.0137201066457001\\
20	0.0126414396328184\\
};
\addlegendentry{spDEIM with $r_{\text{spDEIM}}=r$}

\addplot [color=red, dashdotted, line width=2.0pt, mark size=4.0pt, mark=+, mark options={solid, red}]
  table[row sep=crcr]{%
4	0.0111940475922443\\
8	0.0101854870624077\\
12	0.0101919207763581\\
16	0.0100109318489529\\
20	0.00991712364133515\\
};
\addlegendentry{spDEIM with $r_{\text{spDEIM}}=2r$}

\addplot [color=green, line width=2.0pt, mark size=4.0pt, mark=triangle, mark options={solid, green}]
  table[row sep=crcr]{%
4	0.0107892836569169\\
8	0.0096727992375151\\
12	0.00985699578235654\\
16	0.00985664208428246\\
20	0.00985132149135764\\
};
\addlegendentry{structure-preserving lifting}

\end{axis}

\begin{axis}[%
width=1.227\fheight,
height=0.723\fheight,
at={(-0.16\fheight,-0.08\fheight)},
scale only axis,
xmin=0,
xmax=1,
ymin=0,
ymax=1,
axis line style={draw=none},
ticks=none,
axis x line*=bottom,
axis y line*=left
]
\end{axis}
\end{tikzpicture}
\label{fig:exp_state_train}
    \end{subfigure}
     \begin{subfigure}{.23\textwidth}
       \setlength\fheight{3 cm}
        \setlength\fwidth{\textwidth}
%
%
\definecolor{mycolor1}{rgb}{1.00000,0.00000,1.00000}%

\begin{tikzpicture}

\begin{axis}[%
width=0.683\fheight,
height=0.65\fheight,
at={(0\fheight,0\fheight)},
scale only axis,
xmin=4,
xmax=20,
xlabel style={font=\color{white!15!black}},
xlabel={Reduced dimension $2r$},
ymode=log,
ymin=0.001,
ymax=10,
yminorticks=true,
ylabel style={font=\color{white!15!black}},
ylabel={Relative state error in $p$},
axis background/.style={fill=white},
xmajorgrids,
ymajorgrids,
]
\addplot [color=blue, dotted, line width=2.0pt, mark size=4.0pt, mark=x, mark options={solid, blue}, forget plot]
  table[row sep=crcr]{%
4	0.224754670895917\\
8	0.224193481511836\\
12	0.221568867386758\\
16	0.218620085095729\\
20	0.215991924581325\\
};
\addplot [color=mycolor1, dashed, line width=2.0pt, mark size=4.0pt, mark=o, mark options={solid, mycolor1}]
  table[row sep=crcr]{%
4	0.229847404371219\\
8	0.224846469128811\\
12	0.222566976524038\\
16	0.225476342666547\\
20	0.220901431088653\\
};
\addplot [color=red, dashdotted, dashed, line width=2.0pt, mark size=4.0pt, mark=+, mark options={solid, red}]
  table[row sep=crcr]{%
4	0.224756719040161\\
8	0.224931416991001\\
12	0.222514157334165\\
16	0.21910924665814\\
20	0.216181801908159\\
};
\addplot [color=green, line width=2.0pt, mark size=4.0pt, mark=triangle, mark options={solid, green}]
  table[row sep=crcr]{%
4	1.39659438456626\\
8	1.39732989008522\\
12	1.39788944666986\\
16	1.39828572120109\\
20	1.39873306960895\\
};
\end{axis}
\end{tikzpicture}%
\caption{Training data regime $[0,10]$}
\label{fig:exp_state_train}
    \end{subfigure}
    \hspace{0.25cm}
    \begin{subfigure}{.23\textwidth}
           \setlength\fheight{3 cm}
           \setlength\fwidth{\textwidth}
%
%
\definecolor{mycolor1}{rgb}{1.00000,0.00000,1.00000}%
\begin{tikzpicture}

\begin{axis}[%
width=0.683\fheight,
height=0.65\fheight,
at={(0\fheight,0\fheight)},
scale only axis,
xmin=4,
xmax=20,
xlabel style={font=\color{white!15!black}},
xlabel={Reduced dimension $2r$},
ymode=log,
ymin=0.001,
ymax=0.1,
yminorticks=true,
ylabel style={font=\color{white!15!black}},
ylabel={Relative state error in $q$},
axis background/.style={fill=white},
xmajorgrids,
ymajorgrids,
legend style={legend cell align=left, align=left, draw=white!15!black}
]
\addplot [color=blue, dotted, line width=2.0pt, mark size=4.0pt, mark=x, mark options={solid, blue}]
  table[row sep=crcr]{%
4	0.0264871172694559\\
8	0.0168955997680731\\
12	0.0168960916279813\\
16	0.0168899761529459\\
20	0.016872623930039\\
};

\addplot [color=mycolor1, dashed, line width=2.0pt, mark size=4.0pt, mark=o, mark options={solid, mycolor1}]
  table[row sep=crcr]{%
4	0.0523061292764936\\
8	0.0259244207792918\\
12	0.0254789749901673\\
16	0.0334265285267235\\
20	0.033783250551605\\
};

\addplot [color=red, dashed, line width=2.0pt, mark size=4.0pt, mark=+, mark options={solid, red}]
  table[row sep=crcr]{%
4	0.0259230575597494\\
8	0.022754887563149\\
12	0.0202861870643176\\
16	0.0176802158638452\\
20	0.0176955980236424\\
};
\addplot [color=green, line width=2.0pt, mark size=4.0pt, mark=triangle, mark options={solid, green}]
  table[row sep=crcr]{%
4	0.0182335027992255\\
8	0.0167515636859981\\
12	0.0169141259261324\\
16	0.0169089558737439\\
20	0.0168991242122208\\
};

\end{axis}
\end{tikzpicture}%
\label{fig:exp_state_test}
    \end{subfigure}
       \hspace{0.25cm}
    \begin{subfigure}{.23\textwidth}
           \setlength\fheight{3 cm}
           \setlength\fwidth{\textwidth}
%
%
\definecolor{mycolor1}{rgb}{1.00000,0.00000,1.00000}%
\begin{tikzpicture}

\begin{axis}[%
width=0.683\fheight,
height=0.65\fheight,
at={(0\fheight,0\fheight)},
scale only axis,
xmin=4,
xmax=20,
xlabel style={font=\color{white!15!black}},
xlabel={Reduced dimension $2r$},
ymode=log,
ymin=0.001,
ymax=10,
yminorticks=true,
ylabel style={font=\color{white!15!black}},
ylabel={Relative state error in $p$},
axis background/.style={fill=white},
xmajorgrids,
ymajorgrids,
]
\addplot [color=blue, dotted, line width=2.0pt, mark size=4.0pt, mark=x, mark options={solid, blue}, forget plot]
  table[row sep=crcr]{%
4	0.257328500869418\\
8	0.237330503877936\\
12	0.239933272651322\\
16	0.242701837462397\\
20	0.246335550453495\\
};
\addplot [color=mycolor1, dashed, line width=2.0pt, mark size=4.0pt, mark=o, mark options={solid, mycolor1}]
  table[row sep=crcr]{%
4	0.26731535382371\\
8	0.245264591898598\\
12	0.251958791501594\\
16	0.26991405729821\\
20	0.273406817025201\\
};
\addplot [color=red, dashdotted, line width=2.0pt, mark size=4.0pt, mark=+, mark options={solid, red}]
  table[row sep=crcr]{%
4	0.257080810797993\\
8	0.243817634102084\\
12	0.245290175335424\\
16	0.243364024999107\\
20	0.24585318625416\\
};
\addplot [color=green, line width=2.0pt, mark size=4.0pt, mark=triangle, mark options={solid, green}]
  table[row sep=crcr]{%
4	1.39691431028287\\
8	1.39735963462445\\
12	1.39794050511616\\
16	1.39845388502968\\
20	1.39909917356058\\
};
\end{axis}
\end{tikzpicture}%
\caption{Test data regime $[10,100]$}
\label{fig:exp_state_test}
    \end{subfigure}
\caption{Nonlinear wave equation with exponential nonlinearity. {\Ra{The relative state error comparison for $q$ in plots (a) and (b) show that}} quadratic ROMs obtained via the proposed structure-preserving lifting approach achieve similar state error to nonlinear ROMs obtained via PSD and spDEIM{\Rd{ with $r_{\text{spDEIM}}=2r$}} in both training and test data regimes. {\Rd{The relative state error comparison for $p$ in plots (a) and (b) indicates that, while all ROMs perform poorly with relative error in $p$ above $10^{-1}$, the nonlinear ROMs obtained via PSD and spDEIM achieve higher accuracy than the quadratic ROMs in both training and test regimes.}} } 
 \label{fig:exp_1d_state}
\end{figure}
\begin{figure}[tbp]
\small
\captionsetup[subfigure]{oneside,margin={1.8cm,0 cm}}
\begin{subfigure}{.45\textwidth}
       \setlength\fheight{6 cm}
        \setlength\fwidth{\textwidth}
%
%
\definecolor{mycolor1}{rgb}{1.00000,0.00000,1.00000}%
\begin{tikzpicture}

\begin{axis}[%
width=0.951\fheight,
height=0.59\fheight,
at={(0\fheight,0\fheight)},
scale only axis,
xmin=4,
xmax=20,
xlabel style={font=\color{white!15!black}},
xlabel={Reduced dimension $2r$},
ymode=log,
ymin=100,
ymax=10000,
yminorticks=true,
ylabel style={font=\color{white!15!black}},
ylabel={Efficacy},
axis background/.style={fill=white},
xmajorgrids,
ymajorgrids,
legend style={at={(0.1,1.2)}, anchor=south west, legend cell align=left, align=left, draw=white!15!black},
legend style={font=\small}
]
\addplot [color=mycolor1, dashed, line width=2.0pt, mark size=4.0pt, mark=o, mark options={solid, mycolor1}]
  table[row sep=crcr]{%
4	573.365426056062\\
8	735.536527307993\\
12	631.814711423858\\
16	444.563938047148\\
20	398.158572098041\\
};
\addlegendentry{spDEIM with $r_{\text{spDEIM}}=r$}

\addplot [color=red, dashed, line width=2.0pt, mark size=4.0pt, mark=+, mark options={solid, red}]
  table[row sep=crcr]{%
4	810.987241224549\\
8	797.282973242682\\
12	682.079040663352\\
16	596.866735194317\\
20	473.469708690564\\
};
\addlegendentry{spDEIM with $r_{\text{spDEIM}}=2r$}

\addplot [color=green, line width=2.0pt, mark size=4.0pt, mark=triangle, mark options={solid, green}]
  table[row sep=crcr]{%
4	3485.23226968573\\
8	3320.31682291028\\
12	1835.18308349633\\
16	1262.32973935501\\
20	770.817897681221\\
};
\addlegendentry{structure-preserving lifting}

\end{axis}

\begin{axis}[%
width=1.227\fheight,
height=0.723\fheight,
at={(-0.16\fheight,-0.08\fheight)},
scale only axis,
xmin=0,
xmax=1,
ymin=0,
ymax=1,
axis line style={draw=none},
ticks=none,
axis x line*=bottom,
axis y line*=left
]
\end{axis}
\end{tikzpicture}%
\caption{Efficacy}
\label{fig:exp_1d_eff}
    \end{subfigure}
    \hspace{0.4cm}
    \begin{subfigure}{.45\textwidth}
           \setlength\fheight{6 cm}
           \setlength\fwidth{\textwidth}
\input{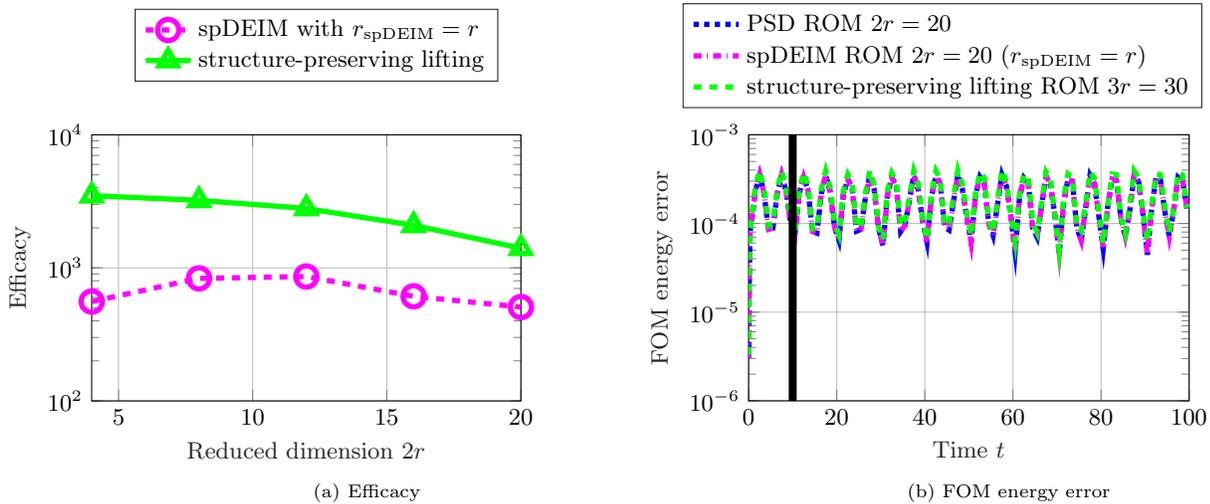}
\caption{FOM energy error}
\label{fig:exp_1d_ener}
    \end{subfigure}
\caption{Nonlinear wave equation with exponential nonlinearity. The efficacy comparison in plot (a) shows that the proposed structure-preserving lifting approach achieves similar accuracy at a substantially lower computational cost in the online stage than the spDEIM approach. Plot (b) shows that all ROMs demonstrate bounded energy error,{\Rd{ with the PSD ROM achieving the lowest energy error.}} The solid black line in plot (b) indicates the end of the training data regime.}
 \label{fig:exp_1d}
\end{figure}
\subsection{Two-dimensional sine-Gordon wave equation}
\label{sec:sg_2d}
We now consider the two-dimensional analogue of the sine-Gordon equation from Section~\ref{sec:fom}. The sine-Gordon equation~\cite{rubinstein1970sine,cuevas2014sine} is a universal nonlinear wave model combining the wave dispersion and the periodic nonlinearity. This equation is used for modeling nonlinear phenomena in a variety of applications, including solid state physics~\cite{josephson1962possible}, relativistic field theory~\cite{samuel1978grand}, nonlinear optics~\cite{mccall1969self}, and hydrodynamics~\cite{coullet1986resonance}. The field variable in the sine-Gordon equation can be interpreted as modeling the phase in the respective physical setting.
\subsubsection{PDE formulation and the corresponding nonlinear conservative FOM}
We consider the FOM setup from~\cite{bo2022arbitrary}. Let $\Omega=(-7,7) \times (-7,7) \subset \real^2 $ be the spatial domain and consider the two-dimensional sine-Gordon equation
\begin{equation}\label{eq:sg_2d_pde}
\frac{\partial^2 \phi}{\partial t^2}(x,y,t)=\frac{\partial^2 \phi}{\partial x^2} (x,y,t)+ \frac{\partial^2 \phi}{\partial y^2}(x,y,t) - \sin(\phi(x,y,t)),
\end{equation}
for $t \in (0,T]$. The boundary conditions are periodic and the corresponding initial conditions are
\begin{equation*}
\phi(x,y,0)=4\tan^{-1}\left(\exp\left((3-\sqrt{x^2+y^2}\right)\right), \qquad  \frac{\partial \phi}{\partial t}(x,y,0)=0.
\end{equation*}
We define $q(x,y,t):=\phi(x,y,t)$ and $p(x,y,t):=\partial \phi(x,y,t)/\partial t$ and rewrite the second-order conservative PDE~\eqref{eq:sg_2d_pde} in the first-order form
\begin{equation}
\frac{\partial q(x,t)}{\partial t} =p(x,t), \qquad \qquad
\frac{\partial p(x,t)}{\partial t} =\frac{\partial^2 q(x,y,t)}{\partial x^2}+ \frac{\partial^2 q(x,y,t)}{\partial y^2} - \sin(q(x,y,t)).
\end{equation}

We discretize the two-dimensional spatial domain $\Omega$ with $n_x=n_y=100$ equally spaced grid points in both spatial directions leading to a nonlinear FOM of dimension $2n$ with $n=n_xn_y=10,000$. The corresponding nonlinear conservative FOM is
\begin{equation*} 
    \dot{\q}=\p, \qquad
    \dot{\p} =\D\q -\sin( \q),
    \label{eq:kg_fom}
\end{equation*}
where $\D=\D^\top$ denotes the {\Ra{symmetric discrete Jacobian}} in the two-dimensional setting and the vector $ \sin( \q)\in \real^{n}$ contains as components the entry-wise sine function of the FOM state vector $\bq$. The nonlinear FOM conserves the space-discretized energy
\begin{equation*}
E(\q,\p)=\frac{1}{2} \p^\top \p -\frac{1}{2}\q^\top\D \q + \sum_{i=1}^n (1-\cos(q_i)).
\end{equation*}
\subsubsection{Structure-preserving lifting based on energy quadratization}
Since the two-dimensional sine-Gordon equation~\eqref{eq:sg_2d_pde} has the same form of nonlinearity as its one-dimensional counterpart in equation~\eqref{eq:sg_pde}, the energy-quadratization strategy combined with POD model reduction yields an energy-preserving quadratic ROM that has the same structure as the one-dimensional sine-Gordon example discussed in Section~\ref{sec:comp}.  {\Ra{This example also satisfies Assumptions 1 and 2, and hence Theorem~\ref{theorem} applies.}}
\subsubsection{Numerical results}
We build the training dataset by integrating the nonlinear FOM for the two-dimensional sine-Gordon equation from $t=0$ to $t=10$. For test data, we consider FOM snapshots from $t=10$ to $t=12.5$, {\Rb{which is 25\% outside the training time interval to demonstrate the proposed method's time extrapolation capability for transport-dominated problems}}. Due to the challenging nature of this two-dimensional example, spDEIM ROMs with $r_{\text{spDEIM}}=r$ are unable to provide accurate predictions. Therefore, we also include plots for spDEIM ROMs with $r_{\text{spDEIM}}=2r$. 

We compare the relative state error of the structure-preserving lifting approach against PSD and spDEIM approaches for the training data regime and the test data regime in Figure~\ref{fig:sg_state_train} and Figure~\ref{fig:sg_state_test}, respectively. We observe that the quadratic ROMs obtained via the structure-preserving lifting approach achieve accuracy similar to the PSD ROMs in both training and test data regimes. While the spDEIM ROMs with $r_{\text{spDEIM}}=2r$ achieve higher accuracy than the spDEIM ROMs with $r_{\text{spDEIM}}=r$, the proposed structure-preserving lifting approach achieves higher accuracy than the spDEIM ROMs with $r_{\text{spDEIM}}=2r$ for $2r\geq 20$. 

In Figure~\ref{fig:sg_2d_eff}, we compare the efficacy of quadratic ROMs obtained via structure-preserving lifting and spDEIM ROM with $r_{\text{spDEIM}}=r$ and $r_{\text{spDEIM}}=2r$. The $4r-$dimensional quadratic ROMs obtained via structure-preserving lifting and the $2r-$dimensional nonlinear spDEIM ROMs with $r_{\text{spDEIM}}=2r$ achieve similar efficacy whereas the $2r-$dimensional nonlinear ROMs obtained with $r_{\text{spDEIM}}=r$ yield substantially lower efficacy primarily due to their poor relative state error performance. Thus, the proposed approach yields quadratic ROMs that achieve accuracy similar to nonlinear PSD ROMs while achieving computational efficiency comparable to nonlinear spDEIM ROMs.

The FOM energy error plots in Figure~\ref{fig:sg_2d_ener} compare energy error performance of the structure-preserving lifting approach against PSD and spDEIM. {\Ra{The quadratic ROM obtained via structure-preserving lifting achieves substantially lower energy error than the spDEIM ROMs in both training and test data regimes. The PSD ROM, on the other hand, yields the lowest energy error among all three approaches.}}
\begin{figure}[tbp]
\small
\captionsetup[subfigure]{oneside,margin={1.8cm,0 cm}}
\begin{subfigure}{.45\textwidth}
       \setlength\fheight{6 cm}
        \setlength\fwidth{\textwidth}
%
%
\definecolor{mycolor1}{rgb}{1.00000,0.00000,1.00000}%
\begin{tikzpicture}

\begin{axis}[%
width=0.951\fheight,
height=0.59\fheight,
at={(0\fheight,0\fheight)},
scale only axis,
xmin=0,
xmax=40,
xlabel style={font=\color{white!15!black}},
xlabel={Reduced dimension $2r$},
ymode=log,
ymin=0.001,
ymax=1,
yminorticks=true,
ylabel style={font=\color{white!15!black}},
ylabel={Relative state error in $q$},
axis background/.style={fill=white},
xmajorgrids,
ymajorgrids,
legend style={draw=none, legend columns=-1},
legend style={at={(-0.1,1.1)}, anchor=south west, legend cell align=left, align=left, draw=white!15!black},
legend style={font=\small}
]
\addplot [color=blue, dotted, line width=2.0pt, mark size=4.0pt, mark=x, mark options={solid, blue}]
  table[row sep=crcr]{%
4	0.158554136590828\\
8	0.0745593043337017\\
12	0.0255604183072662\\
16	0.0105140103648031\\
20	0.00350022677982608\\
24	0.00368553805692792\\
28	0.00149616240165273\\
32	0.00133812841155964\\
36	0.0011969274878512\\
40	0.00113979563762971\\
};
\addlegendentry{PSD}

\addplot [color=mycolor1, dashed, line width=2.0pt, mark size=4.0pt, mark=o, mark options={solid, mycolor1}]
  table[row sep=crcr]{%
4	0.35034064414865\\
8	0.393670285881099\\
12	0.319929632768091\\
16	0.327174539772891\\
20	0.292456642360924\\
24	0.265333215645605\\
28	0.254594347897181\\
32	0.256654134804446\\
36	0.340432814253274\\
40	0.294361182169194\\
};
\addlegendentry{spDEIM with $r_{\text{spDEIM}}=r$}

\addplot [color=red, dashdotted, line width=2.0pt, mark size=4.0pt, mark=+, mark options={solid, red}]
  table[row sep=crcr]{%
4	0.149974003188555\\
8	0.0732131477871174\\
12	0.0308966438346059\\
16	0.0144796931850272\\
20	0.0421429465949991\\
24	0.0091278175462787\\
28	0.0145548529154858\\
32	0.032782131467471\\
36	0.0403346100555187\\
40	0.0150797521897998\\
};
\addlegendentry{spDEIM with $r_{\text{spDEIM}}=2r$}

\addplot [color=green, line width=2.0pt, mark size=4.0pt, mark=triangle, mark options={solid, green}]
  table[row sep=crcr]{%
4	0.24266766129569\\
8	0.0691722107715322\\
12	0.0350473268638889\\
16	0.0106455848990292\\
20	0.00352828840194326\\
24	0.00390853353636164\\
28	0.00152206863655869\\
32	0.00136041476180307\\
36	0.0012034997637606\\
40	0.0011620193887574\\
};
\addlegendentry{structure-preserving lifting}

\end{axis}
\end{tikzpicture}%
\caption{Training data regime $[0,10]$}
\label{fig:sg_state_train}
    \end{subfigure}
    \hspace{0.4cm}
    \begin{subfigure}{.45\textwidth}
           \setlength\fheight{6 cm}
           \setlength\fwidth{\textwidth}
\raisebox{-53mm}{
%
%
\definecolor{mycolor1}{rgb}{1.00000,0.00000,1.00000}%
\begin{tikzpicture}

\begin{axis}[%
width=0.951\fheight,
height=0.59\fheight,
at={(0\fheight,0\fheight)},
scale only axis,
xmin=0,
xmax=40,
xlabel style={font=\color{white!15!black}},
xlabel={Reduced dimension $2r$},
ymode=log,
ymin=0.01,
ymax=1,
yminorticks=true,
ylabel style={font=\color{white!15!black}},
ylabel={Relative state error in $q$},
axis background/.style={fill=white},
xmajorgrids,
ymajorgrids,
legend style={at={(0.3,0.268)}, anchor=south west, legend cell align=left, align=left, draw=white!15!black}
]
\addplot [color=blue, dotted, line width=2.0pt, mark size=4.0pt, mark=x, mark options={solid, blue}]
  table[row sep=crcr]{%
4	0.427475012086782\\
8	0.215380006122958\\
12	0.173182225294446\\
16	0.106715689835152\\
20	0.0717537674474621\\
24	0.0692958511936203\\
28	0.0586768412538284\\
32	0.0589658939274266\\
36	0.0563376560743208\\
40	0.0556042747913625\\
};

\addplot [color=mycolor1, dashed, line width=2.0pt, mark size=4.0pt, mark=o, mark options={solid, mycolor1}]
  table[row sep=crcr]{%
4	0.435407468436249\\
8	0.333725141169838\\
12	0.54415388459157\\
16	0.482357140422123\\
20	0.30064352118013\\
24	0.280100617369238\\
28	0.237171818116093\\
32	0.255409264977332\\
36	0.539059113394953\\
40	0.290371012727452\\
};

\addplot [color=red, dashdotted, line width=2.0pt, mark size=4.0pt, mark=+, mark options={solid, red}]
  table[row sep=crcr]{%
4	0.455295384320912\\
8	0.218197990851136\\
12	0.176027639761994\\
16	0.105480286180431\\
20	0.0891433135483439\\
24	0.0795784754117206\\
28	0.0755683784458242\\
32	0.0878236216463809\\
36	0.086679062018885\\
40	0.0721181172396473\\
};

\addplot [color=green, line width=2.0pt, mark size=4.0pt, mark=triangle, mark options={solid, green}]
  table[row sep=crcr]{%
4	0.378018218184115\\
8	0.234285714590927\\
12	0.171080932168652\\
16	0.105929805607251\\
20	0.0712833541784429\\
24	0.068711608526565\\
28	0.0582902504637456\\
32	0.0589014487242964\\
36	0.0560710647817749\\
40	0.0554768114995593\\
};

\end{axis}

\begin{axis}[%
width=1.227\fheight,
height=0.723\fheight,
at={(-0.16\fheight,-0.08\fheight)},
scale only axis,
xmin=0,
xmax=1,
ymin=0,
ymax=1,
axis line style={draw=none},
ticks=none,
axis x line*=bottom,
axis y line*=left
]
\end{axis}
\end{tikzpicture}
\caption{Test data regime $[10,12.5]$}
\label{fig:sg_state_test}
    \end{subfigure}
\caption{Two-dimensional sine-Gordon equation. The relative state error comparison in plot (a) shows that proposed structure-preserving lifting approach achieves higher accuracy than the spDEIM approach in the training regime. Plot (b) shows that both approaches yield similar accuracy in the test data regime with the structure-preserving lifting approach performing marginally better. }
 \label{fig:sg_2d_state}
\end{figure}
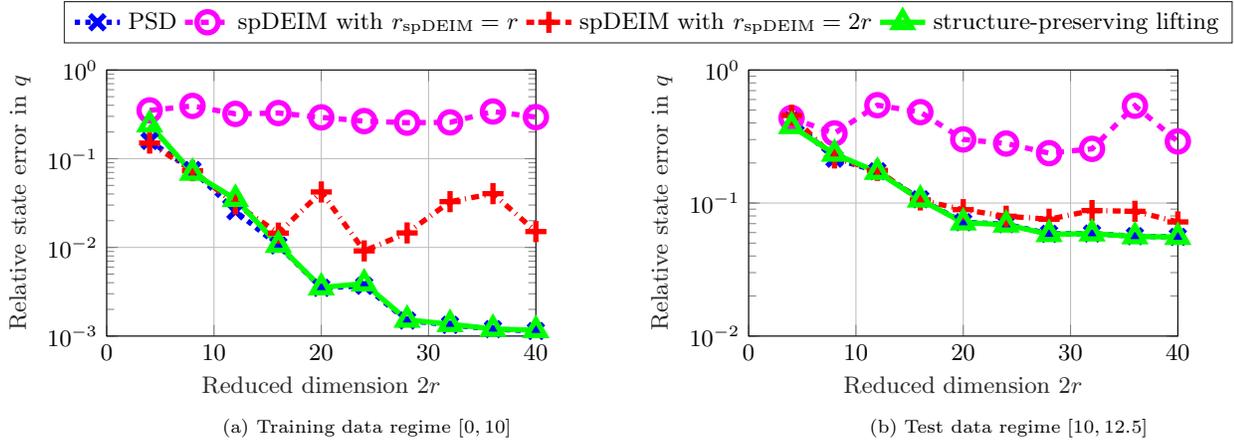
\begin{figure}[tbp]
\small
\captionsetup[subfigure]{oneside,margin={1.8cm,0 cm}}
\begin{subfigure}{.45\textwidth}
       \setlength\fheight{6 cm}
        \setlength\fwidth{\textwidth}
%
%
\definecolor{mycolor1}{rgb}{1.00000,0.00000,1.00000}%
\begin{tikzpicture}

\begin{axis}[%
width=0.951\fheight,
height=0.59\fheight,
at={(0\fheight,0\fheight)},
scale only axis,
xmin=0,
xmax=40,
xlabel style={font=\color{white!15!black}},
xlabel={Reduced dimension $2r$},
ymode=log,
ymin=10,
ymax=1839.38392423517,
yminorticks=true,
ylabel style={font=\color{white!15!black}},
ylabel={Efficacy},
axis background/.style={fill=white},
xmajorgrids,
ymajorgrids,
legend style={at={(0.1,1.25)}, anchor=south west, legend cell align=left, align=left, draw=white!15!black},
legend style={font=\small}
]
\addplot [color=mycolor1, dashed, line width=2.0pt, mark size=4.0pt, mark=o, mark options={solid, mycolor1}]
  table[row sep=crcr]{%
4	57.2847678996387\\
8	50.7598043879949\\
12	56.6680856463636\\
16	38.669227546655\\
20	45.7493415036782\\
24	34.30784892147\\
28	36.3016688128291\\
32	28.9560012543227\\
36	20.3832844660372\\
40	24.5077703807247\\
};
\addlegendentry{spDEIM with $r_{\text{spDEIM}}=r$}

\addplot [color=red, dashdotted, line width=2.0pt, mark size=4.0pt, mark=+, mark options={solid, red}]
  table[row sep=crcr]{%
4	129.540833324326\\
8	284.640739133744\\
12	568.695273072775\\
16	1017.43190431502\\
20	282.127722878766\\
24	1086.23676395647\\
28	629.967504589345\\
32	256.168833912976\\
36	187.447419776401\\
40	475.697171648081\\
};
\addlegendentry{spDEIM with $r_{\text{spDEIM}}=2r$}

\addplot [color=green, line width=2.0pt, mark size=4.0pt, mark=triangle, mark options={solid, green}]
  table[row sep=crcr]{%
4	119.811484979457\\
8	398.479468570567\\
12	520.256196644394\\
16	1042.3009671115\\
20	1839.38392423517\\
24	425.551727037633\\
28	718.083679580088\\
32	207.794146487056\\
36	379.401466891611\\
40	299.690778573534\\
};
\addlegendentry{structure-preserving lifting}

\end{axis}
\end{tikzpicture}%
\caption{Efficacy}
\label{fig:sg_2d_eff}
    \end{subfigure}
    \hspace{0.4cm}
    \begin{subfigure}{.45\textwidth}
           \setlength\fheight{6 cm}
           \setlength\fwidth{\textwidth}
\input{figures/sg_2d/fom_energy_revised.tex}
\caption{FOM energy error}
\label{fig:sg_2d_ener}
    \end{subfigure}
\caption{Two-dimensional sine-Gordon wave equation. Plot (a) shows that both {\Rc{spDEIM with $r_{\text{spDEIM}}=2r$}} and structure-preserving lifting approaches provide similar efficacy {\Rc{whereas spDEIM ROMs with $r_{\text{spDEIM}}=r$ yield substantially lower efficacy}}. The energy error comparison in plot (b) shows that the {\Ra{structure-preserving lifting approach achieves lower energy error than both spDEIM ROMs whereas the PSD ROM achieves the lowest energy error.}} The solid black line in plot (b) indicates the end of the training time interval.}
 \label{fig:sg_2d}
\end{figure}
\subsection{Two-dimensional parametrized Klein-Gordon equation}
\label{sec:kg_2d}
The nonlinear Klein-Gordon equation with parametric nonlinearity~\cite{scott1969nonlinear} is one of the simplest nonlinear relativistic equations in mathematical physics. This nonlinear wave equation was originally studied in the context of the general theory of relativity where it was considered a candidate for relativistic generalization of the Schrödinger equation, see~\cite{kragh1984equation} for more details about its origin. Since its early development almost a century ago, the nonlinear Klein-Gordon equation has become a prototype for modeling nonlinear phenomena in various fields, including metamaterials~\cite{giri2011klein}, electromagnetism~\cite{carvalho2016klein}, and fluid dynamics~\cite{mauser2020rotating}. 
\subsubsection{Parametric PDE formulation and the corresponding nonlinear conservative FOM with parametric dependence}
The nonlinear parametric FOM considered in this section is similar to the two-dimensional nonlinear wave equation example in~\cite{jiang2020linearly}. Let $\Omega=(-10,10) \times (-10,10) \subset \real^2$ be the spatial domain and consider the parametrized two-dimensional nonlinear wave equation 
\begin{equation}\label{eq:non_pde}
\frac{\partial^2 \phi}{\partial t^2}(x,y,t;\mu)=\frac{\partial^2 \phi}{\partial x^2} (x,y,t;\mu)+ \frac{\partial^2 \phi}{\partial y^2}(x,y,t;\mu) - \mu\phi(x,y,t;\mu)^3, 
\end{equation}
with time $t \in (0,T]$ and the scalar parameter $\mu \in \mathcal P=[0.1,1.4]$. We consider periodic boundary conditions
\begin{equation}
\phi(-10,y,t;\mu)=\phi(10,y,t;\mu), \qquad \phi(x,-10,t;\mu)=\phi(x,10,t;\mu).
\end{equation}
The initial conditions are
\begin{equation*}
\phi(x,y,0)=2\sech(\cosh(x^2+y^2)), \qquad  \frac{\partial \phi}{\partial t}(x,y,0)=0.
\end{equation*}
We define $q(x,y,t;\mu):=\phi(x,y,t;\mu)$ and $p(x,y,t;\mu):=\partial \phi(x,y,t;\mu)/\partial t$ to rewrite~\eqref{eq:non_pde} as a system of first-order nonlinear PDEs
\begin{align*}
\frac{\partial }{\partial t}q(x,y,t;\mu)&=p(x,y,t;\mu),\\
\frac{\partial }{\partial t}p(x,y,t;\mu)&=\frac{\partial ^2}{\partial x^2}q(x,y,t;\mu)+\frac{\partial ^2}{\partial y^2}q(x,y,t;\mu)-\mu q(x,y,t;\mu)^3.
\end{align*}

We discretize the two-dimensional spatial domain $\Omega$ with $n_x=n_y=100$ equally spaced grid points in both spatial directions leading to a nonlinear FOM of dimension $2n$ with $n=n_xn_y=10,000$. The corresponding parametrized nonlinear Hamiltonian FOM is
\begin{equation*} 
    \dot{\q}=\p(\mu), \qquad
    \dot{\p} =\D\q -\mu \q^3,
    \label{eq:kg_fom}
\end{equation*}
where $\D=\D^\top$ denotes the {\Ra{symmetric discrete Jacobian}} in the two-dimensional setting and the vector $ \bq^3\in \real^{n}$ contains as components the entry-wise cubic exponential of the FOM state vector $\bq$. The parametric nonlinear FOM conserves the space-discretized energy
\begin{equation}
E(\q,\p;\mu)=\frac{1}{2} \p^\top \p -\frac{1}{2}\q^\top\D \q + \frac{\mu}{4}\sum_{i=1}^n (q_i)^4.
\label{eq:kg_non_fom}
\end{equation}
\subsubsection{Structure-preserving lifting based on energy quadratization}
Since the nonlinear term {\Rd{$g(q)=q^4/4$}} in the nonlinear FOM energy expression~\eqref{eq:kg_non_fom} is nonnegative, we follow the energy-quadratization strategy and {\Rd{solve $ \w_1^\top\w_1=\kappa^2 g(\q)$ with $g(q)=q^4/4$ and choose $\kappa=2$ to obtain the first auxiliary variable $\w_1=\q^2$}}. {\Rd{Since the time evolution equations for $\{\q,\p,\w_1\}$ are quadratic in terms of $\{\q,\p,\w_1 \}$, we do not require additional auxiliary variables in this parametric example.}} The proposed lifting transformation
transforms the nonlinear conservative FOM into a quadratic lifted FOM
\begin{align*} 
    \dot{\q}&=\p, \\
    \dot{\p}&=\D\q - \mu \left({\Rd{\w_1}} \odot \q\right), \\
    {\Rd{\dot{\w}_1}}&=2\q \odot \p,    \end{align*}
with quadratic FOM energy in the lifted variables, i.e., $E_{\text{lift}}(\q,\p,{\Rd{\w_1}})=\frac{1}{2}\p^\top\p - \frac{1}{2}\q^\top \D\q + \frac{\mu}{4} {\Rd{\w_1}}^\top{\Rd{\w_1}} $. 
Similarly to the exponential nonlinear example from Section~\ref{sec:exp}, we consider a block-diagonal POD basis matrix of the form
\begin{equation*}
\bar{\V} =\text{blkdiag}(\bPhi,\bPhi,{\Rd{\V_1}})\in \real^{3n\times 3r},
\end{equation*}
where $\bPhi \in \real^{n\times r}$ is the PSD basis matrix for $\q$ and $\p$, and ${\Rd{\V_1}}\in \real^{n\times r}$ is the POD basis computed via SVD of the lifted snapshot data ${\Rd{\mathbf W_1}}$. Projecting the governing equations for the lifted parametric FOM onto the basis matrix $\bar{\V} $ yields a quadratic ROM with parametric dependence
\begin{align*} 
    \dot{\qhat}&=\phat, \\
    \dot{\phat}&=\Dhat\qhat -\mu\bPhi^\top\left( {\Rd{\V_1}}{\Rd{\what_1}} \odot  \bPhi\qhat\right) , \\
    {\Rd{\dot{\what}_1}}&=2{\Rd{\V_1}}^\top \left( \bPhi\qhat \odot \bPhi\phat\right),
    \end{align*}
    where $\Dhat=\bPhi^\top\D\bPhi$. Substituting the lifted FOM approximation based on the basis matrix $\bar{\V} $ into the lifted FOM energy expression and computing its time derivative yields
\begin{align*} 
   \frac{\dd}{{\dd}t}E_{\text{lift}}(\bPhi\qhat,\bPhi\phat,{\Rd{\V_1}}{\Rd{\what_1}};\mu)&=\phat^\top(\Dhat\qhat-\mu\bPhi^\top\left( {\Rd{\V_1}}{\Rd{\what_1}} \odot  \bPhi\qhat\right) ) - \qhat^\top \Dhat\phat + \frac{\mu}{2}  {\Rd{\what_1}}^\top\underbrace{\left( {\Rd{\V_1}}^\top {\Rd{\V_1}}\right)}_{\textbf{I}_r}{\Rd{\dot{\what}_1}} =0
        \end{align*}
 Thus, the structure-preserving lifting approach yields a structure-preserving quadratic ROM with parametric dependence that conserves the lifted FOM energy. {\Ra{This conservative nonlinear FOM with cubic nonlinearity satisfies Assumption 1, and hence Theorem~\ref{theorem} applies.}}
\subsubsection{Numerical results}
Let $\mu_1, \cdots, \mu_{10}$ be $M_{\text{train}}=10$ training parameters equidistantly distributed (including endpoints) in $[0.1,1]\subset \mathcal P$. For this parametric nonlinear PDE example, we build a training dataset by integrating the nonlinear FOM for each training parameter using the implicit midpoint method from time $t=0$ to $t=8$ with a fixed time step of $\Delta t=0.1$. For test data, we consider $M_{\text{test}}=4$ test parameters $\mu_{\text{test},1}=1.1$, $\mu_{\text{test},2}=1.2$, $\mu_{\text{test},3}=1.3$, and $\mu_{\text{test},4}=1.4$ to evaluate how the structure-preserving lifting approach performs in a parameter extrapolation study. {\Rb{The test parameters in this study are chosen such that the test dataset features nonlinear phenomena that are qualitatively different from the training dataset without making the projection error in the test dataset substantially higher.}} Similarly to the two-dimensional sine-Gordon equation example in Section~\ref{sec:sg_2d}, the spDEIM approach fails to yield accurate ROMs with $r_{\text{spDEIM}}=r$ so we show spDEIM ROMs with $r_{\text{spDEIM}}=2r$ and $r_{\text{spDEIM}}=4r$.

In Figure~\ref{fig:kg_2d_state}, we compare the accuracy of the proposed approach against PSD and spDEIM for both training and test parameters. We evaluate the accuracy of ROMs for this parametric problem using the \textit{average relative state error} metric which is computed as 
\begin{equation*}
\text{Average relative state error in } q=\frac{1}{M_{\text{train/test}}}\sum_{j=1}^{M_{\text{train/test}}}   \frac{\lVert \Q_j- \bPhi\widehat{\Q}_j \rVert^2_F}{\lVert \Q_j \rVert^2_F},
\end{equation*}
where $\Q_j \in \real^{n \times K}$ is the FOM snapshot data for the parameter value $\mu_j$ and $\bPhi\widehat{\Q}_j\in \real^{n \times K}$ is its approximation.
Due to the parametric nature of the training dataset, all four approaches achieve relative state error below $10^{-1}$  {\Ra{only for ROMs of sizes greater than $2r=40$}}, and therefore, we only focus on ROMs with $2r \geq 40$. For training parameters, we observe that all three approaches yield similar accuracy in Figure~\ref{fig:kg_train} with the spDEIM approach with $r_{\text{spDEIM}}=4r$ performing marginally better than quadratic ROMs obtained via the structure-preserving lifting approach. The relative state error comparison for test parameters in Figure~\ref{fig:kg_test} shows a similar trend with the quadratic ROMs achieving marginally lower accuracy than the PSD ROMs for $2r>50$. 

We compare the efficacy of the proposed approach against {\Ra{PSD}} and spDEIM in Figure~\ref{fig:kg_eff}. {\Ra{The plots in Figure~\ref{fig:kg_eff} show that the PSD ROMs yield substantially lower efficacy than the structure-preserving lifting ROMs.}} Unlike the efficacy comparisons for the other non-parametric examples, we observe that the spDEIM ROMs yield higher efficacy than the structure-preserving lifting approach. Due to the parametric nature of this problem,  achieving relative state error below $10^{-2}$ requires ROMs of size $2r\geq44$ and for such ROM sizes the computational cost of simulating $3r-$dimensional quadratic ROM is higher than simulating a $2r-$dimensional nonlinear ROM. However, this higher efficacy of spDEIM for such parametric problems comes with a significant higher computational expense than the structure-preserving lifting approach in the offline phase as the spDEIM approach needs to build and compute SVD of a Jacobian snapshot matrix of dimension $n \times rMK$.

{\Ra{The energy error comparison plots in Figure~\ref{fig:kg_energy} demonstrate that all three approaches achieve energy error below $10^{-1}$.  The bounded energy error behavior for the test parameter $\gamma_{\text{test}}=1.4$ shows the energy-conserving nature of the structure-preserving lifting ROM.}}
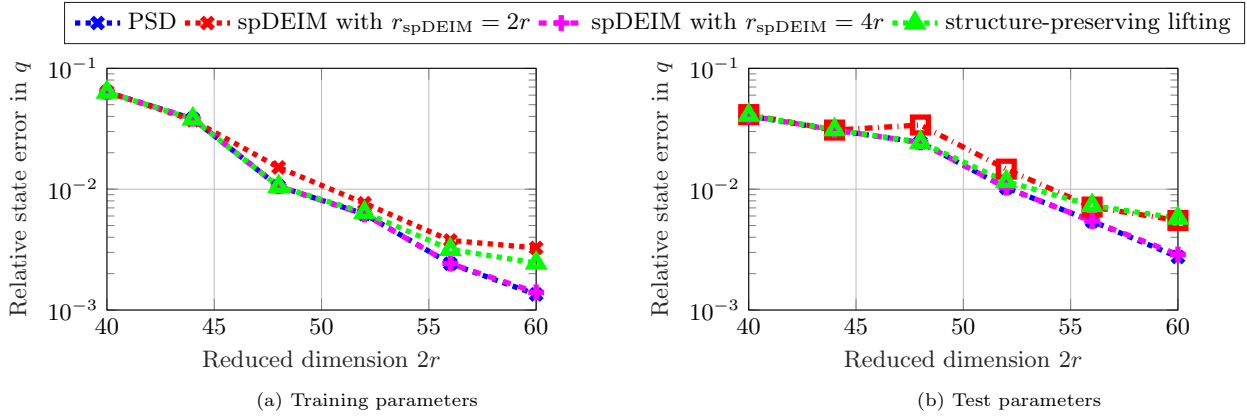
\begin{figure}[tbp]
\small
\captionsetup[subfigure]{oneside,margin={1.8cm,0 cm}}
\begin{subfigure}{.45\textwidth}
       \setlength\fheight{6 cm}
        \setlength\fwidth{\textwidth}
%
%
\definecolor{mycolor1}{rgb}{1.00000,0.00000,1.00000}%
\begin{tikzpicture}

\begin{axis}[%
width=0.951\fheight,
height=0.536\fheight,
at={(0\fheight,0\fheight)},
scale only axis,
xmin=40,
xmax=60,
xlabel style={font=\color{white!15!black}},
xlabel={Reduced dimension $2r$},
ymode=log,
ymin=1e-3,
ymax=0.1,
yminorticks=true,
ylabel style={font=\color{white!15!black}},
ylabel={Relative state error in $q$ },
axis background/.style={fill=white},
xmajorgrids,
ymajorgrids,
legend style={draw=none, legend columns=-1},
legend style={at={(-0.1,1.1)}, anchor=south west, legend cell align=left, align=left, draw=white!15!black}
]
\addplot [color=blue, dotted, line width=2.0pt, mark size=3.0pt, mark=x, mark options={solid, blue}]
  table[row sep=crcr]{%
40	0.0634908741709219\\
44	0.0382162350438569\\
48	0.0105366147387258\\
52	0.00622555964490029\\
56	0.00241892163609106\\
60	0.00134606401788301\\
};
\addlegendentry{PSD}

\addplot [color=red, dotted, line width=2.0pt, mark size=3.0pt, mark=x, mark options={solid, red}]
  table[row sep=crcr]{%
40	0.0638017723095194\\
44	0.0372261537829698\\
48	0.0151374013454008\\
52	0.00765547017701693\\
56	0.00376730466316413\\
60	0.00326442419985602\\
};
\addlegendentry{spDEIM with $r_{\text{spDEIM}}=2r$}

\addplot [color=mycolor1, dashed, line width=2.0pt, mark size=3.0pt, mark=+, mark options={solid, mycolor1}]
  table[row sep=crcr]{%
40	0.0635859569306786\\
44	0.0382803964381847\\
48	0.0105598115141744\\
52	0.00618197152811477\\
56	0.00242484302965896\\
60	0.001399499340521\\
};
\addlegendentry{spDEIM with $r_{\text{spDEIM}}=4r$}

\addplot [color=green, dotted, line width=2.0pt, mark size=3.0pt, mark=triangle, mark options={solid, green}]
  table[row sep=crcr]{%
38	0.0902973909704916\\
40	0.0634715714451674\\
44	0.0380528043642091\\
48	0.0105374504838166\\
52	0.00640898905372596\\
56	0.00317539032137059\\
60	0.00244768636183089\\
};
\addlegendentry{structure-preserving lifting}


\end{axis}

\begin{axis}[%
width=1.227\fheight,
height=0.658\fheight,
at={(-0.16\fheight,-0.072\fheight)},
scale only axis,
xmin=0,
xmax=1,
ymin=0,
ymax=1,
axis line style={draw=none},
ticks=none,
axis x line*=bottom,
axis y line*=left
]
\end{axis}
\end{tikzpicture}%
\caption{Training parameters}
\label{fig:kg_train}
    \end{subfigure}
    \hspace{0.4cm}
    \begin{subfigure}{.45\textwidth}
           \setlength\fheight{6 cm}
           \setlength\fwidth{\textwidth}
\raisebox{-50mm}{
%
%
\definecolor{mycolor1}{rgb}{1.00000,0.00000,1.00000}%
\begin{tikzpicture}

\begin{axis}[%
width=0.951\fheight,
height=0.536\fheight,
at={(0\fheight,0\fheight)},
scale only axis,
xmin=40,
xmax=60,
xlabel style={font=\color{white!15!black}},
xlabel={Reduced dimension $2r$},
ymode=log,
ymin=1e-3,
ymax=0.1,
yminorticks=true,
ylabel style={font=\color{white!15!black}},
ylabel={Relative state error in $q$},
axis background/.style={fill=white},
xmajorgrids,
ymajorgrids,
legend style={legend cell align=left, align=left, draw=white!15!black}
]
\addplot [color=blue, dotted, line width=2.0pt, mark size=3.0pt, mark=x, mark options={solid, blue}]
  table[row sep=crcr]{%
40	0.0404816012770555\\
44	0.0307712204225275\\
48	0.0243596243216795\\
52	0.0102678340922843\\
56	0.00543567837655684\\
60	0.00274958684160939\\
};

\addplot [color=red, dashdotted, line width=2.0pt, mark size=3pt, mark=square, mark options={solid, red}]
  table[row sep=crcr]{%
40	0.0413890795738617\\
44	0.030832962814923\\
48	0.0339633822566597\\
52	0.0146504179311609\\
56	0.00711219437017432\\
60	0.00551011326049215\\
};

\addplot [color=mycolor1, dashed, line width=2.0pt, mark size=3.0pt, mark=+, mark options={solid, mycolor1}]
  table[row sep=crcr]{%
40	0.0405679956072913\\
44	0.0308379719234648\\
48	0.0243691501513758\\
52	0.0102900162276769\\
56	0.00545071049784627\\
60	0.0028788817383856\\
};

\addplot [color=green, dotted, line width=2.0pt, mark size=3.0pt, mark=triangle, mark options={solid, green}]
  table[row sep=crcr]{%
38.6016221434843	0.0543326086910153\\
40	0.0408835135523471\\
44	0.0310556829149094\\
48	0.0244319104686917\\
52	0.0115710069760893\\
56	0.00732555725226403\\
60	0.00575574947723428\\
};


\end{axis}

\begin{axis}[%
width=1.227\fheight,
height=0.658\fheight,
at={(-0.16\fheight,-0.072\fheight)},
scale only axis,
xmin=0,
xmax=1,
ymin=0,
ymax=1,
axis line style={draw=none},
ticks=none,
axis x line*=bottom,
axis y line*=left
]
\end{axis}
\end{tikzpicture}
\caption{Test parameters}
\label{fig:kg_test}
    \end{subfigure}
\caption{Two-dimensional Klein-Gordon equation with parametric dependence. Plots (a) and (b) show that quadratic ROMs obtained via structure-preserving lifting, PSD ROMs, and spDEIM ROMs achieve similar accuracy for both training and test parameters.}
 \label{fig:kg_2d_state}
\end{figure}
\begin{figure}[tbp]
\small
\captionsetup[subfigure]{oneside,margin={1.8cm,0 cm}}
\begin{subfigure}{.45\textwidth}
       \setlength\fheight{6 cm}
        \setlength\fwidth{\textwidth}
%
%
\begin{tikzpicture}

\begin{axis}[%
width=0.951\fheight,
height=0.536\fheight,
at={(0\fheight,0\fheight)},
scale only axis,
xmin=40,
xmax=60,
xlabel style={font=\color{white!15!black}},
xlabel={Reduced dimension $2r$},
ymode=log,
ymin=10,
ymax=10000,
yminorticks=true,
ylabel style={font=\color{white!15!black}},
ylabel={Efficacy},
axis background/.style={fill=white},
xmajorgrids,
ymajorgrids,
legend style={at={(0.138,1.25)}, anchor=south west, legend cell align=left, align=left, draw=white!15!black}
]

\addplot [color=blue, dotted, line width=2.0pt, mark size=3.0pt, mark=x, mark options={solid, blue}]
  table[row sep=crcr]{%
40	40.5268\\
44	43.7074\\
48	159.6407\\
52	157.5488\\
56	311.7804\\
60	505.6590\\
};
\addlegendentry{PSD}

\addplot [color=red, dashed, line width=2.0pt, mark size=3.0pt, mark=+, mark options={solid, red}]
  table[row sep=crcr]{%
40	513.167220201587\\
44	958.866878244416\\
48	2075.32854634491\\
52	3175.6389166633\\
56	6088.37939201631\\
60	6270.56986387954\\
};
\addlegendentry{spDEIM with $r_{\text{spDEIM}}=2r$}

\addplot [color=green, dotted, line width=2.0pt, mark size=3.0pt, mark=triangle, mark options={solid, green}]
  table[row sep=crcr]{%
40	416.475455921293\\
44	378.95458562223\\
48	1028.36918884866\\
52	1230.61442522161\\
56	2093.07973439771\\
60	2050.55547977916\\
};
\addlegendentry{structure-preserving lifting}

\end{axis}

\begin{axis}[%
width=1.227\fheight,
height=0.658\fheight,
at={(-0.16\fheight,-0.072\fheight)},
scale only axis,
xmin=0,
xmax=1,
ymin=0,
ymax=1,
axis line style={draw=none},
ticks=none,
axis x line*=bottom,
axis y line*=left
]
\end{axis}
\end{tikzpicture}%
\caption{Efficacy}
\label{fig:kg_eff}
    \end{subfigure}
    \hspace{0.4cm}
    \begin{subfigure}{.45\textwidth}
           \setlength\fheight{6 cm}
           \setlength\fwidth{\textwidth}
\raisebox{-65mm}{
%
%
\definecolor{mycolor1}{rgb}{0.00000,0.44700,0.74100}%
\definecolor{mycolor2}{rgb}{0.85000,0.32500,0.09800}%
\definecolor{mycolor3}{rgb}{0.92900,0.69400,0.12500}%
\begin{tikzpicture}

\begin{axis}[%
width=0.951\fheight,
height=0.536\fheight,
at={(0\fheight,0\fheight)},
scale only axis,
xmin=0,
xmax=8,
xlabel style={font=\color{white!15!black}},
xlabel={Time $t$},
ymode=log,
ymin=0.0001,
ymax=0.1,
yminorticks=true,
ylabel style={font=\color{white!15!black}},
ylabel={FOM energy error},
axis background/.style={fill=white},
xmajorgrids,
ymajorgrids,
legend style={at={(-0.15,1.25)}, anchor=south west, legend cell align=left, align=left, draw=white!15!black}
]

\addplot [color=blue, dashed, line width=2.0pt]
  table[row sep=crcr]{%
0.199999999999999	2.66803539511784e-05\\
0.300000000000001	0.00062788861717877\\
0.4	0.00260977822625477\\
0.5	0.00601258479599323\\
0.6	0.0100763912549985\\
0.699999999999999	0.0138493796390878\\
0.800000000000001	0.016626708472545\\
0.9	0.0179631798041055\\
1	0.017609145353648\\
1.1	0.0156910165081353\\
1.2	0.013026162738314\\
1.3	0.0109496633291076\\
1.4	0.0101576879254299\\
1.5	0.00998219027937126\\
1.6	0.0098572945220883\\
1.7	0.0103180204835451\\
1.8	0.0114082422779484\\
1.9	0.0123061780364389\\
2	0.0127415554800939\\
2.2	0.0132698838563556\\
2.3	0.0134790831106545\\
2.5	0.0137973204538696\\
2.7	0.0140464613037818\\
3	0.014338488213225\\
3.4	0.0146431910447814\\
3.9	0.0149482514923364\\
4.5	0.0152491468388132\\
5.2	0.0155407528832382\\
6	0.0158177483443456\\
7	0.0161041931443788\\
8	0.0163419697706672\\
};
\addlegendentry{PSD ROM $2r=60$}
\addplot [color=red, dashdotted, line width=2.0pt]
  table[row sep=crcr]{%
0.199999999999999	6.76432430782369e-05\\
0.300000000000001	0.000790115217762376\\
0.4	0.00283797247925008\\
0.5	0.00650260113213336\\
0.6	0.01121365724979\\
0.699999999999999	0.0148598727480976\\
0.800000000000001	0.0162635159027423\\
0.9	0.0169122043160405\\
1	0.0172287033338273\\
1.1	0.0153561280127394\\
1.2	0.0123167958063925\\
1.3	0.0112846257833223\\
1.4	0.0117876319079369\\
1.5	0.0114354168488728\\
1.6	0.0101688981211021\\
1.7	0.00997626598819579\\
1.9	0.0133693916809852\\
2	0.0145129794245429\\
2.1	0.01473325591449\\
2.4	0.0132269867099717\\
2.5	0.0120850451530191\\
2.6	0.0112286891921542\\
2.7	0.0118794019788754\\
2.8	0.0131079150576977\\
2.9	0.0137660326902733\\
3.1	0.0151005665222681\\
3.2	0.0150696574652375\\
3.3	0.014745771792634\\
3.4	0.0145487359327853\\
3.5	0.0140453698434988\\
3.6	0.0137811451415337\\
3.7	0.0146524752044433\\
3.8	0.0157424373702202\\
3.9	0.01550555222723\\
4	0.0142205838829057\\
4.1	0.0135226865630023\\
4.2	0.0140999126802671\\
4.3	0.0151569339050252\\
4.4	0.0156988638448183\\
4.5	0.0154275696436434\\
4.6	0.0149662887132195\\
4.7	0.0152405798301038\\
4.8	0.0160109639322183\\
4.9	0.0159012917099574\\
5	0.0146715675559813\\
5.1	0.0141195062460974\\
5.2	0.0153906722054467\\
5.3	0.0169158539747877\\
5.4	0.0167591470477646\\
5.5	0.015337457480814\\
5.6	0.0143147217094486\\
5.7	0.0144414031558665\\
5.8	0.0154143557519365\\
5.9	0.0165171952860646\\
6	0.0167898945580703\\
6.1	0.0155888041218191\\
6.2	0.0134593880253726\\
6.3	0.0115401968025731\\
6.4	0.0100935641213403\\
6.5	0.00886994792099812\\
6.6	0.00840596318504026\\
6.7	0.00929815211028767\\
6.8	0.0107070047598063\\
6.9	0.0111780620233367\\
7	0.0105458612861025\\
7.1	0.00976504589499313\\
7.2	0.00936907771624193\\
7.3	0.00932487014724755\\
7.4	0.00984890919455097\\
7.5	0.0109872842697109\\
7.6	0.0116945512894824\\
7.7	0.0105570808335506\\
7.8	0.00742491219103272\\
7.9	0.00353163826738182\\
8	0.000156501693604696\\
};
\addlegendentry{spDEIM ROM $2r=60$ ($r_{\text{spDEIM}}=2r$)}

\addplot [color=green, solid, line width=2.0pt]
  table[row sep=crcr]{%
0	0\\
0.1	0.000111870866179515\\
0.2	0.00197158020992788\\
0.3	0.00798882948934875\\
0.4	0.0182495656653714\\
0.5	0.0304291648059814\\
0.6	0.0417278031982823\\
0.7	0.0500603545174863\\
0.8	0.0541045937436456\\
0.9	0.053111246079352\\
1	0.0474126808297706\\
1.1	0.0393993697241297\\
1.2	0.0330470494605788\\
1.3	0.0305495813350774\\
1.4	0.0300021969249531\\
1.5	0.0296633824164655\\
1.6	0.0310694677940069\\
1.7	0.0343173014062836\\
1.8	0.0369830021462167\\
1.9	0.0382899143994894\\
2	0.0391177842999605\\
2.1	0.0398700163597812\\
2.2	0.0404856470433685\\
2.3	0.0409823979771295\\
2.4	0.0414205295540455\\
2.5	0.0418151833980278\\
2.6	0.0421682678726313\\
2.7	0.0424832224596003\\
2.8	0.0427664295041359\\
2.9	0.0430309740404869\\
3	0.0432764694693549\\
3.1	0.0435008453861872\\
3.2	0.0437191563974966\\
3.3	0.0439362212723177\\
3.4	0.0441375170520905\\
3.5	0.0443211860342296\\
3.6	0.0444995384706328\\
3.7	0.0446764534641784\\
3.8	0.0448477341690716\\
3.9	0.0450091815680673\\
4	0.0451625849918025\\
4.1	0.0453127138801142\\
4.2	0.0454580693852836\\
4.3	0.045599520032481\\
4.4	0.0457429698805356\\
4.5	0.0458841538607362\\
4.6	0.0460153921994424\\
4.7	0.0461390883754393\\
4.8	0.0462585548689617\\
4.9	0.0463755533058748\\
5	0.0464955778621186\\
5.1	0.0466174134839991\\
5.2	0.0467323938652851\\
5.3	0.0468404408821152\\
5.4	0.046947552362343\\
5.5	0.047053462852258\\
5.6	0.0471560531257899\\
5.7	0.0472565545283828\\
5.8	0.04735492207569\\
5.9	0.0474497628287134\\
6	0.0475412150519514\\
6.1	0.0476305951540667\\
6.2	0.0477215611310169\\
6.3	0.0478157784309212\\
6.4	0.047906308237026\\
6.5	0.0479882940698781\\
6.6	0.0480688311873212\\
6.7	0.04815202114263\\
6.8	0.0482319712081949\\
6.9	0.0483091510602401\\
7	0.0483889526537325\\
7.1	0.0484677967955963\\
7.2	0.0485426443944243\\
7.3	0.0486168616440421\\
7.4	0.048688900394568\\
7.5	0.0487589657642434\\
7.6	0.0488333544885506\\
7.7	0.0489041185146118\\
7.8	0.0489618667397804\\
7.9	0.0490280854506875\\
};
\addlegendentry{structure-preserving lifting ROM $3r=90$}

\end{axis}

\begin{axis}[%
width=1.227\fheight,
height=0.658\fheight,
at={(-0.16\fheight,-0.072\fheight)},
scale only axis,
xmin=0,
xmax=1,
ymin=0,
ymax=1,
axis line style={draw=none},
ticks=none,
axis x line*=bottom,
axis y line*=left
]
\end{axis}
\end{tikzpicture}
\caption{FOM energy error for $\gamma_{\text{test}}=1.4$}
\label{fig:kg_energy}
    \end{subfigure}
\caption{Two-dimensional Klein-Gordon equation with parametric dependence. Plot (a) shows that the spDEIM approach yields ROMs with higher efficacy than the structure-preserving lifting approach {\Ra{whereas the PSD approach yields ROMs with the lowest efficacy}}. {\Ra{The energy error comparison in plot (b) shows that all three ROMs yield energy error below $10^{-1}$ with the PSD ROM and the spDEIM ROM performing marginally better than the structure-preserving lifting ROM.}}}
 \label{fig:kg_2d}
\end{figure}
\subsection{Two-dimensional Klein-Gordon-Zakharov equations}
\label{sec:kgz}
The Klein-Gordon-Zakharov (KGZ) equations~\cite{boling1995global} are a system of nonlinear dispersive PDEs used to describe the mutual interaction between the Langmuir waves and ion acoustic waves in a plasma. These coupled equations play a crucial role in the study of the dynamics of strong Langmuir turbulence in plasma physics~\cite{berge1996perturbative}. The KGZ equations can be derived from the two-fluid Euler-Maxwell equations for the electrons, ions, and electric field, by first neglecting the magnetic field and also assuming that ions move much slower than electrons. These equations reduce to the Zakharov equations in the high-frequency limit and converge to the Klein-Gordon equation in the subsonic limit. 
\subsubsection{System of coupled PDEs and the corresponding nonlinear conservative FOM}
We consider the FOM setup from~\cite{guo2023energy}. Let $\Omega=(-20,20) \times (-20,20) \subset \real^2$ be the spatial domain and consider the coupled nonlinear equations
\begin{align*}
\frac{\partial ^2\psi}{\partial t^2}(x,y,t)&=\frac{\partial ^2\psi}{\partial x^2}(x,y,t)+\frac{\partial ^2\psi}{\partial y^2}(x,y,t)-\psi(x,y,t) -\psi(x,y,t)\phi(x,y,t)-|\psi(x,y,t)|^2\psi(x,y,t), \\
 \frac{\partial ^2\phi }{\partial t^2}(x,y,t)&=\frac{\partial ^2\phi}{\partial x^2}(x,y,t)+\frac{\partial ^2\phi}{\partial y^2}(x,y,t) + \left(\frac{\partial ^2}{\partial x^2}+  \frac{\partial ^2}{\partial y^2}\right) \left( |\psi (x,y,t)|^2\right),
\end{align*}
where $t \in (0,T]$ is time, $\psi(x,y,t)$ is a complex-valued scalar field that describes the fast time scale component of the electric field raised by electrons, and $\phi(x,y,t)$ is a real-valued scalar field that describes the deviation of the ion density from its equilibrium. The boundary conditions are periodic and the initial conditions are
\begin{align*}
\psi(x,y,0)&=\sech(-(x-2)^2-y^2) + \sech(-x^2-(y-2)^2), \qquad \frac{\partial \psi}{\partial t}(x,y,0)=0,\\
 \phi (x,y,0)&=\sech(-(x-2)^2-y^2) + \sech(-x^2-(y-2)^2), \qquad \frac{\partial \phi}{\partial t}(x,y,0)=0.
\end{align*}
The coupled system of PDEs conserves the total energy
\begin{equation*} 
 \mathcal E[\psi(x,y,t),\phi(x,y,t)]:=\int_{\Omega} \left( \bigg |\frac{\partial \psi}{\partial t}\bigg|^2 + |\nabla \psi|^2  +|\psi|^2  + \phi |\psi|^2
 +\frac{1}{2}|\nabla \varphi|^2 + \frac{1}{2}\phi^2 + \frac{1}{2} |\psi|^4  \right) {\dd} x {\dd} y,
\end{equation*}
where $\varphi:=\varphi (x,y,t)$ is defined via $\Delta \varphi (x,y,t)=\frac{\partial \phi (x,y,t)}{\partial t}$  with $\lim_{|\sqrt{x^2+y^2}|\to \infty}\varphi(x,y,t)=0$. To rewrite the KGZ equations in first-order form, we first write the complex-valued function $\psi$ in terms of its real and imaginary parts as $\psi=q_1+iq_2$ and then define $p_1=\partial q_1/\partial t$ and $p_2=\partial q_2/\partial t$. The resulting system of six coupled PDEs is
\begin{align*}
\frac{\partial q_1}{\partial t}&=p_1,  \qquad \ \frac{\partial p_1} {\partial t}= \mathbf \Delta q_1-q_1 -q_1\phi-(q_1^2+q_2^2)q_1,\\
\frac{\partial q_2}{\partial t}&=p_2, \qquad \ \frac{\partial p_2}{\partial t}=\mathbf \Delta q_2-q_2 -q_2\phi-(q_1^2+q_2^2)q_2, \\
\frac{\partial \phi}{\partial t}&=\mathbf \Delta \varphi, \qquad \frac{\partial \varphi}{\partial t}=\phi+  (q_1^2+q_2^2),
\end{align*}
where we use the notation $\mathbf \Delta$ to denote the Laplacian operator in $\real^2$.

We discretize the two-dimensional spatial domain $\Omega$ with $n_x=n_y=400$ equally spaced grid points in both spatial directions to derive the nonlinear FOM of dimension $6n=6n_xn_y=960{,}000$
\begin{align*}
\dot{\q_1}&=\p_1,\\
\dot{\q_2}&=\p_2,\\
\dot{\p_1}&=\D\q_1-\q_1-\bphi\odot\q_1 - (\q_1^2+\q_2^2)\odot \q_1,\\
\dot{\p_2}&=\D\q_2-\q_2-\bphi\odot\q_2 - (\q_1^2+\q_2^2)\odot\q_2,\\
\dot{\bvarphi}&=\bphi + (\q_1^2+\q_2^2),\\
\dot{\bphi}&=\D\bvarphi,
\end{align*}
with the space-discretized energy 
\begin{equation*} 
 E(\q_1,\q_2,\p_1,\p_2,\bvarphi,\bphi)=\p_1^\top\p_1 +\p_2^\top\p_2 + \q_1^\top\q_1 +\q_2^\top\q_2 - \q_1^\top\D\q_1 -\q_2^\top\D\q_2+ \bphi^\top (\q_1^2+\q_2^2)
 -\frac{1}{2}\bvarphi^\top\D\bvarphi + \frac{1}{2}\bphi^\top\bphi + {\Ra{\sum_{i=1}^n\frac{1}{2} (q_{1,i}^2+q_{2,i}^2)^2}}.
 \end{equation*} 
 Unlike the previous three numerical examples, the nonlinear conservative FOM for the KGZ equations is not of the form~\eqref{eq:cons_fom} as it does not have a canonical Hamiltonian formulation.
\subsubsection{Structure-preserving lifting based on energy quadratization}
We follow the proposed energy-quadratization strategy and introduce only one auxiliary variable $\w=\q_1^2+\q_2^2$ to lift the nonlinear FOM to a quadratic lifted FOM 
\begin{align*}
\dot{\q_1}&=\p_1,\\
\dot{\q_2}&=\p_2,\\
\dot{\p_1}&=\D\q_1-\q_1-\bphi\odot\q_1 - \w \odot \q_1,\\
\dot{\p_2}&=\D\q_2-\q_2-\bphi \odot\q_2 -\w \odot \q_2,\\
\dot{\bvarphi}&=\bphi + \w,\\
\dot{\bphi}&=\D\bvarphi,\\
\dot{\w}   &=2 \q_1\odot \p_1 + 2\q_2 \odot \p_2,
\end{align*}
with a quadratic FOM energy in the lifted state variables
\begin{equation*} 
 E_{\text{lift}}(\q_1,\q_2,\p_1,\p_2,\bvarphi,\bphi,\w)=\p_1^\top\p_1 +\p_2^\top\p_2 + \q_1^\top\q_1 +\q_2^\top\q_2 - \q_1^\top\D\q_1 -\q_2^\top\D\q_2+ \bphi^\top \w
 -\frac{1}{2}\bvarphi^\top\D\bvarphi + \frac{1}{2}\bphi^\top\bphi 
 + \frac{1}{2} \w^\top\w. 
\end{equation*}
To approximate the lifted FOM state, we consider a block-diagonal projection matrix $\bar{\V} $ of the form
\begin{equation*}
\bar{\V} =\text{blkdiag}(\bPhi,\bPhi,\bPhi,\bPhi,{\Ra{\V,\V,\V_1}}) \in \real^{7n \times 7r},
\end{equation*}
where $\bPhi \in \real^{n\times r}$ is computed from the FOM snapshots of $\q_1,\q_2, \p_1,$ and $\p_2$, ${\Ra{\V}}\in \real^{n \times r}$ is computed from the FOM snapshots of $\bvarphi$ and $\bphi$, and ${\Ra{\V_{1}}} \in \real^{n \times r}$ is computed from the lifted $\w$ snapshots. Substituting the lifted FOM state followed by Galerkin projection leads to  
\begin{align*}
\dot{\qhat}_1&=\phat_1,\\
\dot{\qhat}_2&=\phat_2,\\
\dot{\phat}_1&=\Dhat_1\qhat_1-\qhat_1-\bPhi^\top(\V\bphihat\odot \bPhi \qhat_1) - \bPhi^\top( \V_1\what\odot \bPhi \qhat_1),\\
\dot{\phat}_2&=\Dhat_1\qhat_2-\qhat_2-\bPhi^\top(\V\bphihat\odot \bPhi \qhat_2) - \bPhi^\top( \V_1\what\odot \bPhi \qhat_2),\\
\dot{\bvarphihat}&=\bphihat + \V^\top\V_1\what,\\
\dot{\bphihat}&=\Dhat_2\bvarphihat,\\
\dot{\what}   &=2\V_{1}^\top\left( \bPhi \qhat_1\odot \bPhi \phat_1 +  \bPhi\qhat_2 \odot \bPhi \phat_2\right),
\end{align*}
where $\Dhat_1=\bPhi^\top\D\bPhi$  and $\Dhat_2=\V^\top\D\V$. We substitute the lifted FOM state approximation into the lifted FOM energy expression and then compute its time derivative 
\begin{align*} 
    \frac{\dd}{{\dd}t}E_{\text{lift}}(\bPhi\qhat_1,\bPhi\qhat_2,\bPhi\phat_1,\bPhi\phat_2,\V\bvarphihat,\V\bphihat,\V_{1}\what)
    &=2\bPhi^\top \left( \V^\top\V_{1}\V_{1}^\top -\V^\top \right) \left( \bPhi \qhat_1\odot \bPhi \phat_1 +  \bPhi\qhat_2 \odot \bPhi \phat_2\right).
\end{align*}

The residual term in the above equation vanishes only for the special case of $\V=\V_{1}$. To ensure the quadratic ROM conserves the lifted FOM energy for all $t \in (0,T]$, we compute a joint basis matrix $\V_{\text{joint}}$ via SVD of the concatenated matrix consisting of snapshots of $\bphi$, $\bvarphi$, and $\w$. Thus, the structure-preserving lifting transformation $\w=(\q_1^2+\q_2^2)$ combined with a block-diagonal basis matrix of the form
\begin{equation*}
\bar{\V} =\text{blkdiag}(\bPhi,\bPhi,\bPhi,\bPhi,\V_{\text{joint}},\V_{\text{joint}},\V_{\text{joint}}) \in \real^{7n \times 7r},
\end{equation*}
yields a $7r-$dimensional energy-conserving quadratic ROM for the KGZ equations, i.e.,
\begin{align*} 
    \frac{\dd}{{\dd}t}E_{\text{lift}}(\bPhi\qhat_1,\bPhi\qhat_2,\bPhi\phat_1,\bPhi\phat_2,\V_{\text{joint}}\bvarphihat,\V_{\text{joint}}\bphihat,\V_{\text{joint}}\what)
    &=0.
\end{align*}
\subsubsection{Numerical results}
Due to the high-dimensional nature of the nonlinear FOM in this example, the discrete equations for time-marching are solved using Picard iterations as this approach does not require the computation of a Jacobian matrix at every time step. Numerical time
integration of the nonlinear FOM with $960{,}000$ degrees of freedom from $t=0$ to $t=5$ using $\dt=0.01$ requires approximately $89$~\si{\min} (MATLAB wall clock time) on the Triton Shared Computing Cluster~\cite{san2022triton} equipped with $8$ processing cores of Intel Xeon Platinum 64-core CPU at 2.9 GHz and 1 TB memory.

In this numerical example, we build a training dataset consisting of FOM snapshots  from $t=0$ to $t=4$. 
We consider a test dataset consisting of FOM snapshots from $t=4$ to $t=5$, {\Rb{which is 25\% outside the training time interval}} to study the time extrapolation capability of the structure-preserving lifting approach. {\Rb{Due to the transport-dominated nature of this problem, the test dataset in this problem features propagating wave fronts that travel outside the region covered in the training dataset}}. Due to the non-canonical nature of KGZ equations, the spDEIM approach from~\cite{pagliantini2023gradient} is not applicable to this example as the construction of the DEIM basis in spDEIM assumes the FOM is derived from a canonical Hamiltonian PDE. Therefore, we only consider structure-preserving lifting ROMs and PSD ROMs for this example.  

In Figure~\ref{fig:kgz_state_q}, we compare the relative state error of the structure-preserving lifting approach against PSD for the complex-valued scalar field $\psi(x,y,t)$. For training data, the relative state error plots in Figure~\ref{fig:state_train_q} show that the quadratic ROMs obtained via structure-preserving lifting perform similar to the PSD ROMs up to $6r=150$. For $6r>150$, we observe that the state error levels off in the training data regime for the structure-preserving lifting approach. In Figure~\ref{fig:state_test_q}, both structure-preserving lifting ROMs and PSD ROMs exhibit similar accuracy in the test data regime. Figure~\ref{fig:kgz_state_phi} shows relative state error for the real-valued scalar field $\phi (x,y,t)$ with increasing ROM order. The comparison in Figure~\ref{fig:state_train_phi}  shows that structure-preserving lifting ROMs and PSD ROMs achieve similar accuracy in the training data regime up to $6r=270$. For $6r>270$, we observe that the state error levels off for the structure-preserving lifting approach. In Figure~\ref{fig:state_test_phi}, we observe that both approaches fail to achieve relative state error below $10^{-1}$ in the test data regime, even for ROM dimension $6r=360$. This lack of predictive capability is primarily due to the transport nature of this problem where the linear basis for both approaches fails to provide accurate state approximations in the test data regime, see~\cite{peherstorfer2022breaking} for more details about limitations of using ROMs based on linear subspaces for transport-dominated problems. 

In Figure~\ref{fig:proj_q} and Figure~\ref{fig:proj_phi}, we compare the time evolution of the projection error in $\psi$ and $\phi$ for both structure-preserving lifting and PSD ROMs. The projection error plots in Figure~\ref{fig:proj_q} show that the basis matrix for $\psi$ provides accurate approximations with relative error below $10^{-13}$ for both approaches in the training data regime. For test data, the basis matrix yields substantially higher projection error in the test data regime with relative error higher than $10^{-2}$ at $t=5$. In Figure~\ref{fig:proj_phi}, we observe that both approaches provide accurate approximations for $\phi$ with relative error below $10^{-6}$ in the training data regime. However, the projection error for both approaches grows significantly in the test data regime with relative error higher than $10^{-1}$ at $t=5$. The substantial growth in the projection error for both $\psi$ and $\phi$ in the test data regime corroborates our explanation for both approaches failing to provide accurate predictions for this transport-dominated problem. The energy error comparison in Figure~\ref{fig:energy} shows that both the quadratic ROM based on the structure-preserving lifting approach and the PSD ROM achieve FOM energy error below $10^{-2}$ with bounded energy error in the test data regime. 

Finally, we compare the FOM solution and the approximate ROM solution for $\psi(x,y,t)$ and $\phi(x,y,t)$ in Figure~\ref{fig:kgz_psi} and Figure~\ref{fig:kgz_phi}, respectively. Figure~\ref{fig:kgz_psi} shows that the structure-preserving quadratic ROM of dimension $7r=420$ accurately captures the time-evolution of the complex-valued scalar field $\psi(x,y,t)$ and yields accurate approximate solution even at $t=4.5$, which is 12.5\% outside the training time interval. Figure~\ref{fig:kgz_phi} shows that the proposed approach provides accurate approximation of $\phi(x,y,t)$ solution over the two-dimensional computational domain at $t=1.5$, $t=3$, and $t=4.5$. Compared to the approximate FOM run time of $89$~\si{\min}, numerical time integration of the structure-preserving quadratic ROM of size $7r=420$ requires approximately $78$~\si{\s} (MATLAB wall clock time averaged over 20 runs), which is a factor
of $68\times$ speedup.
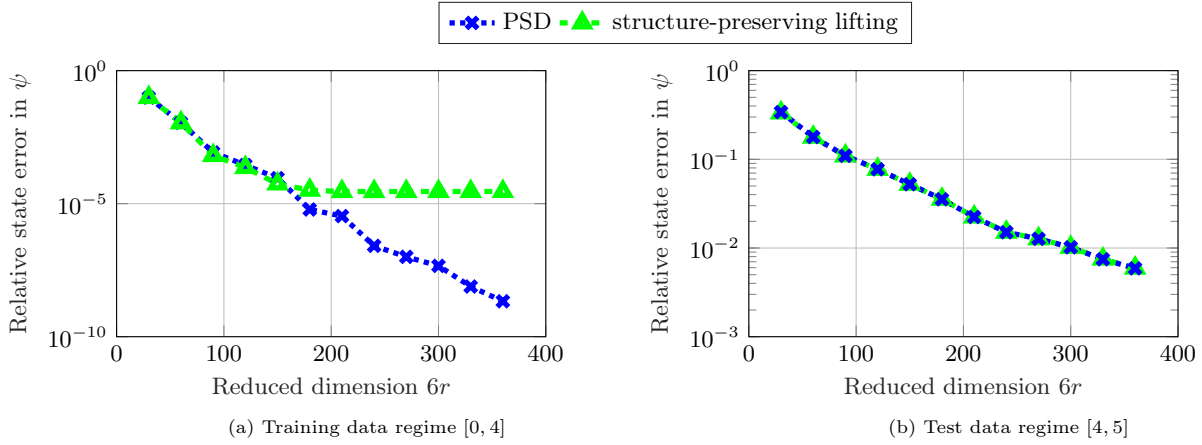
\begin{figure}[tbp]
\small
\captionsetup[subfigure]{oneside,margin={1.8cm,0 cm}}
\begin{subfigure}{.45\textwidth}
       \setlength\fheight{6 cm}
        \setlength\fwidth{\textwidth}
%
%
\begin{tikzpicture}

\begin{axis}[%
width=0.951\fheight,
height=0.59\fheight,
at={(0\fheight,0\fheight)},
scale only axis,
xmin=0,
xmax=400,
xlabel style={font=\color{white!15!black}},
xlabel={Reduced dimension $6r$},
ymode=log,
ymin=1e-10,
ymax=1,
yminorticks=true,
ylabel style={font=\color{white!15!black}},
ylabel={Relative state error in $\psi$},
axis background/.style={fill=white},
xmajorgrids,
ymajorgrids,
yminorgrids,
legend style={draw=none, legend columns=-1},
legend style={at={(0.75,1.1)}, anchor=south west, legend cell align=left, align=left, draw=white!15!black}
]

\addplot [color=blue, dotted, line width=2.0pt, mark size=3.0pt, mark=x, mark options={solid, blue}]
  table[row sep=crcr]{%
30	0.0973508267835936\\
60	0.0109763883744027\\
90	0.000839164320884976\\
120	0.000281886485202453\\
150	9.06210785814612e-05\\
180	6.04361626661903e-06\\
210	3.41262315717982e-06\\
240	2.60371019957538e-07\\
270	1.0016310461032e-07\\
300	4.60219717143753e-08\\
330	7.68066215324989e-09\\
360	2.14539667450598e-09\\
};
\addlegendentry{PSD}
\addplot [color=green, dashed, line width=2.0pt, mark size=3.0pt, mark=triangle, mark options={solid, green}]
  table[row sep=crcr]{%
30	0.0976326762239408\\
60	0.01094500500749\\
90	0.000645931701140754\\
120	0.000229076240930473\\
150	5.53977391438002e-05\\
180	3.21451520520133e-05\\
210	2.85916742514566e-05\\
240	2.85418440332051e-05\\
270	2.86271905511806e-05\\
300	2.86575670392268e-05\\
330	2.86960302488893e-05\\
360	2.87321804460762e-05\\
};
\addlegendentry{structure-preserving lifting}

\end{axis}

\begin{axis}[%
width=1.227\fheight,
height=0.723\fheight,
at={(-0.16\fheight,-0.08\fheight)},
scale only axis,
xmin=0,
xmax=1,
ymin=0,
ymax=1,
axis line style={draw=none},
ticks=none,
axis x line*=bottom,
axis y line*=left
]
\end{axis}
\end{tikzpicture}%
\caption{Training data regime $[0,4]$}
\label{fig:state_train_q}
    \end{subfigure}
    \hspace{0.4cm}
    \begin{subfigure}{.45\textwidth}
           \setlength\fheight{6 cm}
           \setlength\fwidth{\textwidth}
\raisebox{-53mm}{
%
%
\begin{tikzpicture}

\begin{axis}[%
width=0.951\fheight,
height=0.59\fheight,
at={(0\fheight,0\fheight)},
scale only axis,
xmin=0,
xmax=400,
xlabel style={font=\color{white!15!black}},
xlabel={Reduced dimension $6r$},
ymode=log,
ymin=0.001,
ymax=1,
yminorticks=true,
ylabel style={font=\color{white!15!black}},
ylabel={Relative state error in $\psi$},
axis background/.style={fill=white},
xmajorgrids,
ymajorgrids,
legend style={legend cell align=left, align=left, draw=white!15!black}
]
\addplot [color=green, dashed, line width=2.0pt, mark size=3.0pt, mark=triangle, mark options={solid, green}]
  table[row sep=crcr]{
30	0.333969451510617\\
60	0.177859571523418\\
90	0.109319283302542\\
120	0.0772185752599888\\
150	0.0524104004671025\\
180	0.0354941635565243\\
210	0.0224121180876265\\
240	0.0150795348591646\\
270	0.0127200761046453\\
300	0.0102009285357374\\
330	0.00751008584317494\\
360	0.00594239982295015\\
};

\addplot [color=blue, dotted, line width=2.0pt, mark size=3.0pt, mark=x, mark options={solid, blue}]
  table[row sep=crcr]{%
30	0.340734488159584\\
60	0.178539508377701\\
90	0.109444440895849\\
120	0.0772700106284573\\
150	0.0523488045421839\\
180	0.0354656291491545\\
210	0.0223663239839059\\
240	0.0150478101482762\\
270	0.0126963286447042\\
300	0.0101800475943521\\
330	0.00746928703912376\\
360	0.00589702966904651\\
};

\end{axis}
\end{tikzpicture}
\caption{Test data regime $[4,5]$}
\label{fig:state_test_q}
    \end{subfigure}\caption{Klein-Gordon-Zakharov equations. Plot (a) shows that both structure-preserving lifting ROMs and PSD ROMs achieve similar accuracy for $\psi$ in the training data regime up to $6r=150$. For $6r>150$, the relative state error for the structure-preserving lifting approach does not decrease as favorably with an increase in the reduced dimension. Plot (b) shows that both approaches achieve similar state error performance in the test data regime.}
 \label{fig:kgz_state_q}
\end{figure}
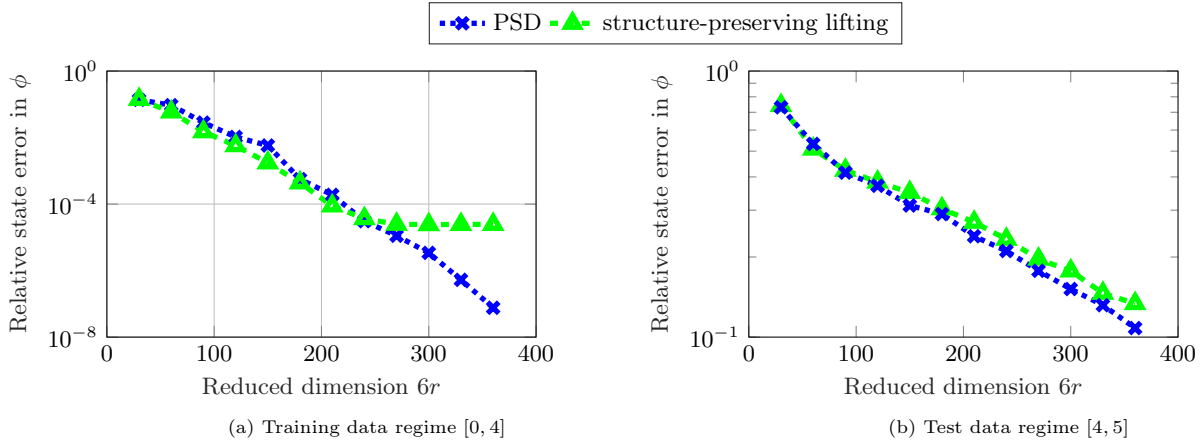
\begin{figure}[tbp]
\small
\captionsetup[subfigure]{oneside,margin={1.8cm,0 cm}}
    \begin{subfigure}{.45\textwidth}
       \setlength\fheight{6 cm}
        \setlength\fwidth{\textwidth}
%
%
\begin{tikzpicture}

\begin{axis}[%
width=0.951\fheight,
height=0.59\fheight,
at={(0\fheight,0\fheight)},
scale only axis,
xmin=0,
xmax=400,
xlabel style={font=\color{white!15!black}},
xlabel={Reduced dimension $6r$},
ymode=log,
ymin=1e-08,
ymax=1,
yminorticks=true,
ylabel style={font=\color{white!15!black}},
ylabel={Relative state error in $\phi$},
axis background/.style={fill=white},
xmajorgrids,
ymajorgrids,
legend style={draw=none, legend columns=-1},
legend style={at={(0.75,1.1)}, anchor=south west, legend cell align=left, align=left, draw=white!15!black}
]

\addplot [color=blue, dotted, line width=2.0pt, mark size=3.0pt, mark=x, mark options={solid, blue}]
  table[row sep=crcr]{%
30	0.136779287290138\\
60	0.0939888556598516\\
90	0.0281575689243546\\
120	0.0104343745556966\\
150	0.0057638389665394\\
180	0.000559757316623709\\
210	0.000189102677138314\\
240	3.06276358743014e-05\\
270	1.1164857558811e-05\\
300	3.4068564518606e-06\\
330	5.27527279016873e-07\\
360	7.62553868409222e-08\\
};
\addlegendentry{PSD}

\addplot [color=green, dashed, line width=2.0pt, mark size=3.0pt, mark=triangle, mark options={solid, green}]
  table[row sep=crcr]{%
30	0.140215223699327\\
60	0.058967236471198\\
90	0.0150084692853293\\
120	0.00571783789007595\\
150	0.00173415616775348\\
180	0.000434134224001606\\
210	8.82048988394856e-05\\
240	3.64495488827428e-05\\
270	2.47109098074387e-05\\
300	2.41109643266656e-05\\
330	2.44298496340645e-05\\
360	2.45485191145124e-05\\
};
\addlegendentry{structure-preserving lifting}

\end{axis}

\begin{axis}[%
width=1.227\fheight,
height=0.723\fheight,
at={(-0.16\fheight,-0.08\fheight)},
scale only axis,
xmin=0,
xmax=1,
ymin=0,
ymax=1,
axis line style={draw=none},
ticks=none,
axis x line*=bottom,
axis y line*=left
]
\end{axis}
\end{tikzpicture}%
\caption{Training data regime $[0,4]$}
\label{fig:state_train_phi}
    \end{subfigure}
    \hspace{0.4cm}
    \begin{subfigure}{.45\textwidth}
           \setlength\fheight{6 cm}
           \setlength\fwidth{\textwidth}
\raisebox{-53mm}{
%
%
\begin{tikzpicture}

\begin{axis}[%
width=0.951\fheight,
height=0.59\fheight,
at={(0\fheight,0\fheight)},
scale only axis,
xmin=0,
xmax=400,
xlabel style={font=\color{white!15!black}},
xlabel={Reduced dimension $6r$},
ymode=log,
ymin=0.1,
ymax=1,
yminorticks=true,
ylabel style={font=\color{white!15!black}},
ylabel={Relative state error in $\phi$},
axis background/.style={fill=white},
legend style={legend cell align=left, align=left, draw=white!15!black}
]
\addplot [color=green, dashed, line width=2.0pt, mark size=3.0pt, mark=triangle, mark options={solid, green}]
  table[row sep=crcr]{%
30	0.74202777726921\\
60	0.509024955419476\\
90	0.423784664574767\\
120	0.382584946028884\\
150	0.349676817326932\\
180	0.303699862434508\\
210	0.270222962817022\\
240	0.23323100422575\\
270	0.196463153841104\\
300	0.177387052334283\\
330	0.14613533244765\\
360	0.133708784520085\\
};

\addplot [color=blue, dotted, line width=2.0pt, mark size=3.0pt, mark=x, mark options={solid, blue}]
  table[row sep=crcr]{%
30	0.730774703288248\\
60	0.532159228566006\\
90	0.415193732527051\\
120	0.37052567025014\\
150	0.312370566022836\\
180	0.290218067826112\\
210	0.239388628135366\\
240	0.210738831830813\\
270	0.177604078650503\\
300	0.151637726198157\\
330	0.131814986023554\\
360	0.10814316983368\\
};

\end{axis}

\begin{axis}[%
width=1.227\fheight,
height=0.723\fheight,
at={(-0.16\fheight,-0.08\fheight)},
scale only axis,
xmin=0,
xmax=1,
ymin=0,
ymax=1,
axis line style={draw=none},
ticks=none,
axis x line*=bottom,
axis y line*=left
]
\end{axis}
\end{tikzpicture}
\caption{Test data regime $[4,5]$}
\label{fig:state_test_phi}
    \end{subfigure}
\caption{Klein-Gordon-Zakharov equations. Plot (a) shows that both structure-preserving lifting ROMs and PSD ROMs achieve similar accuracy for $\phi$ in the training data regime up to $6r=270$. The relative state error for the structure-preserving lifting approach levels off for $6r>270$. Plot (b) shows that both approaches fail to achieve relative state error below $10^{-1}$ in the test data regime.}
 \label{fig:kgz_state_phi}
\end{figure}
\begin{figure}[tbp]
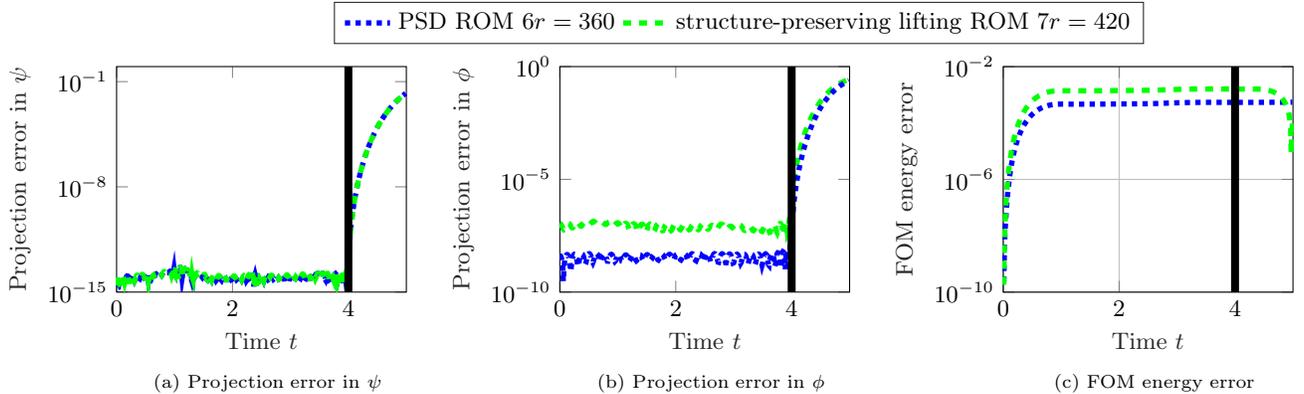

\small
\captionsetup[subfigure]{oneside,margin={1.8cm,0 cm}}
    \begin{subfigure}{.3\textwidth}
       \setlength\fheight{4 cm}
        \setlength\fwidth{\textwidth}
\input{figures/kgz/finest/proj_q.tex}
\caption{Projection error in $\psi$}
\label{fig:proj_q}
    \end{subfigure}
    \hspace{0.4cm}
    \begin{subfigure}{.3\textwidth}
           \setlength\fheight{4 cm}
           \setlength\fwidth{\textwidth}
\raisebox{-48mm}{\input{figures/kgz/finest/proj_phi.tex}}
\caption{Projection error in $\phi$}
\label{fig:proj_phi}
    \end{subfigure}
        \hspace{0.4cm}
    \begin{subfigure}{.3\textwidth}
           \setlength\fheight{4 cm}
           \setlength\fwidth{\textwidth}
\raisebox{-48mm}{
%
%
\definecolor{mycolor1}{rgb}{0.00000,0.44700,0.74100}%
\definecolor{mycolor2}{rgb}{0.85000,0.32500,0.09800}%
\begin{tikzpicture}

\begin{axis}[%
width=0.963\fheight,
height=0.75\fheight,
at={(0\fheight,0\fheight)},
scale only axis,
xmin=0,
xmax=5,
xlabel style={font=\color{white!15!black}},
xlabel={Time $t$},
ymode=log,
ymin=1e-10,
ymax=0.01,
yminorticks=true,
ylabel style={font=\color{white!15!black}},
ylabel={FOM energy error},
axis background/.style={fill=white},
xmajorgrids,
ymajorgrids,
]
\addplot [color=blue, dotted, line width=2.0pt]
  table[row sep=crcr]{%
0.00999999999999979	2.36668711295351e-10\\
0.0199999999999996	3.18671027343953e-09\\
0.0299999999999994	1.36322660182486e-08\\
0.0500000000000007	6.21517347099142e-08\\
0.0600000000000005	1.21819693958969e-07\\
0.0700000000000003	2.19414569073706e-07\\
0.0800000000000001	3.67752445527002e-07\\
0.0999999999999996	8.74894280968872e-07\\
0.119999999999999	1.77063202499994e-06\\
0.140000000000001	3.19456353736313e-06\\
0.16	5.29074059159029e-06\\
0.18	8.19997146663808e-06\\
0.199999999999999	1.20526587897984e-05\\
0.220000000000001	1.69625694252318e-05\\
0.24	2.30218307069663e-05\\
0.26	3.02973302450481e-05\\
0.279999999999999	3.88285788130815e-05\\
0.300000000000001	4.86269888824608e-05\\
0.32	5.96764316287589e-05\\
0.34	7.19348701704803e-05\\
0.359999999999999	8.53368271828003e-05\\
0.380000000000001	9.97964301677713e-05\\
0.4	0.000115210784601913\\
0.42	0.000131463450575211\\
0.44	0.000148427835101756\\
0.460000000000001	0.000165970356329126\\
0.48	0.000183953282576113\\
0.5	0.000202237193816473\\
0.529999999999999	0.000229923794336173\\
0.56	0.00025751626991223\\
0.59	0.000284643397358196\\
0.619999999999999	0.000310926982165256\\
0.65	0.000335845284148491\\
0.68	0.000359071418652092\\
0.710000000000001	0.000380325039691343\\
0.74	0.000399379418704485\\
0.77	0.000416069277175666\\
0.800000000000001	0.000430298629366917\\
0.83	0.000442047445417302\\
0.859999999999999	0.000451375667998945\\
0.890000000000001	0.00045842308299143\\
0.92	0.000463403815210767\\
0.960000000000001	0.00046731555662518\\
1	0.00046873285379661\\
1.06	0.000467967088015939\\
1.22	0.000464254641633488\\
1.32	0.000465068186672397\\
1.5	0.000466657899351048\\
1.72	0.000468570668836036\\
2.49	0.000483198483343586\\
2.6	0.000489402149714806\\
2.74	0.000500160478927683\\
3.02	0.00052319914205782\\
3.15	0.000531110702290789\\
3.28	0.000536433660276999\\
3.42	0.000539553237554174\\
3.6	0.000540698933637031\\
3.93	0.000539399302761013\\
4.28	0.00053908157949536\\
4.57	0.000541401795217097\\
4.99	0.000548051783284791\\
};

\addplot [color=green, dashed, line width=2.0pt]
  table[row sep=crcr]{%
0.00999999999999979	1.83495103556197e-10\\
0.0199999999999996	4.03078956878745e-09\\
0.0299999999999994	2.13819384953239e-08\\
0.0399999999999991	6.846988981124e-08\\
0.0500000000000007	1.67673301803006e-07\\
0.0600000000000005	3.47177297044254e-07\\
0.0700000000000003	6.40548441879218e-07\\
0.0800000000000001	1.08623236428684e-06\\
0.0899999999999999	1.72698335518362e-06\\
0.109999999999999	3.78243476916395e-06\\
0.130000000000001	7.21020576293086e-06\\
0.15	1.24386632205642e-05\\
0.17	1.98978684193207e-05\\
0.19	2.9997123378962e-05\\
0.210000000000001	4.3104752671752e-05\\
0.23	5.95311657980347e-05\\
0.25	7.95159084736951e-05\\
0.27	0.000103219057377828\\
0.289999999999999	0.000130716969638342\\
0.31	0.000162002102565566\\
0.33	0.000196986385167293\\
0.35	0.000235507465167757\\
0.369999999999999	0.000277337074840034\\
0.390000000000001	0.000322190748565845\\
0.41	0.000369738175563726\\
0.43	0.000419613565022701\\
0.449999999999999	0.000471425525765881\\
0.470000000000001	0.000524766098615145\\
0.49	0.000579218718257834\\
0.52	0.000662068457290844\\
0.550000000000001	0.000745088892776948\\
0.58	0.000826940847114203\\
0.609999999999999	0.00090636165380147\\
0.640000000000001	0.00098218473079669\\
0.67	0.00105335753677537\\
0.699999999999999	0.00111896063326185\\
0.73	0.00117822928726128\\
0.76	0.00123057748033716\\
0.790000000000001	0.00127562256860984\\
0.82	0.0013132074102441\\
0.85	0.00134341579101147\\
0.880000000000001	0.00136657666701467\\
0.91	0.00138325327744951\\
0.94	0.00139421462178688\\
0.98	0.00140155928368131\\
1.03	0.00140250648042638\\
1.13	0.00139385557544756\\
1.22	0.00138936142372131\\
1.33	0.00139183599291755\\
1.54	0.00139660742943306\\
1.72	0.00140190408264061\\
2.5	0.00144577140546062\\
2.61	0.00146398413594397\\
2.75	0.0014956907009082\\
3.02	0.00156061798697465\\
3.15	0.00158366396193287\\
3.28	0.00159902833212527\\
3.42	0.00160784483705811\\
3.6	0.00161071516431093\\
3.94	0.00160577379142978\\
4.37	0.0015957558463333\\
4.42	0.00158397227615751\\
4.46	0.00156670327446136\\
4.49	0.00154725191192938\\
4.52	0.00152073061021383\\
4.55	0.00148581195408497\\
4.58	0.00144130869352011\\
4.6	0.00140584169334033\\
4.62	0.00136550309382528\\
4.64	0.00132018343029813\\
4.66	0.00126986033163575\\
4.68	0.00121459944301933\\
4.7	0.00115454993154344\\
4.72	0.00108993449945956\\
4.74	0.00102103430690022\\
4.76	0.000948169698008313\\
4.78	0.000871678087878534\\
4.8	0.000791890751397659\\
4.82	0.000709110515281281\\
4.84	0.000623592452843695\\
4.86	0.000535529598828361\\
4.87	0.000490585223051313\\
4.88	0.000445045431556536\\
4.89	0.000398914880784105\\
4.9	0.000352194429406154\\
4.91	0.000304881641473003\\
4.92	0.000256971426374548\\
4.93	0.000208456808541086\\
4.94	0.000159329818006881\\
4.95	0.00010958248942643\\
4.96	5.92079553780422e-05\\
4.97	8.20161724732315e-06\\
4.98	4.34376250223066e-05\\
4.99	9.57061043254725e-05\\
};

\addplot [color=black, line width=3.0pt, forget plot]
  table[row sep=crcr]{%
4	1e-10\\
4	0.01\\
};
\end{axis}

\begin{axis}[%
width=1.322\fheight,
height=0.991\fheight,
at={(-0.239\fheight,-0.167\fheight)},
scale only axis,
xmin=0,
xmax=1,
ymin=0,
ymax=1,
axis line style={draw=none},
ticks=none,
axis x line*=bottom,
axis y line*=left
]
\end{axis}
\end{tikzpicture}
\caption{FOM energy error}
\label{fig:energy}
    \end{subfigure}
\caption{Klein-Gordon-Zakharov equations. The projection error comparisons in plots (a) and (b) shows that the basis matrix for both approaches provides accurate approximations in the training data regime. However, the projection error for both $\psi$ and $\phi$ grows substantially in the test data regime for both approaches. Despite the projection error growth in the test data regime, the energy error comparison in plot (c) shows that both structure-preserving lifting and PSD approaches yield ROMs with bounded energy error. }
 \label{fig:kgz_proj}
\end{figure}
\begin{figure}[tbp]
\centering

\newcolumntype{C}[1]{>{\centering\arraybackslash}m{#1}}

\setlength{\tabcolsep}{8pt}
\renewcommand{\arraystretch}{1.0}

\begin{tabular}{C{0.20\textwidth} C{0.23\textwidth} C{0.23\textwidth} C{0.23\textwidth}}

\makecell{\small Conservative\\ \small \Rc{Nonlinear} FOM} &

\begin{minipage}[b]{\linewidth}
  \centering
  \includegraphics[width=\linewidth]{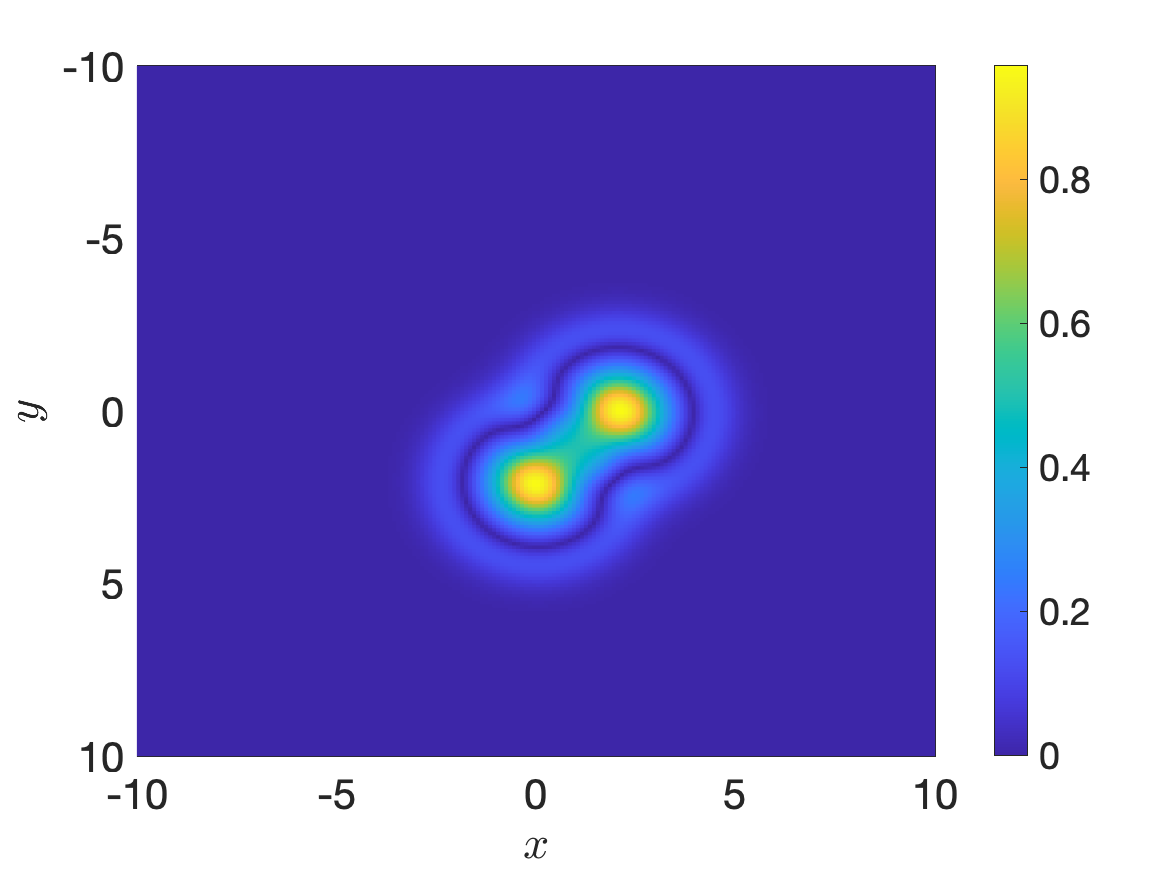}
\end{minipage} &

\begin{minipage}[b]{\linewidth}
  \centering
  \includegraphics[width=\linewidth]{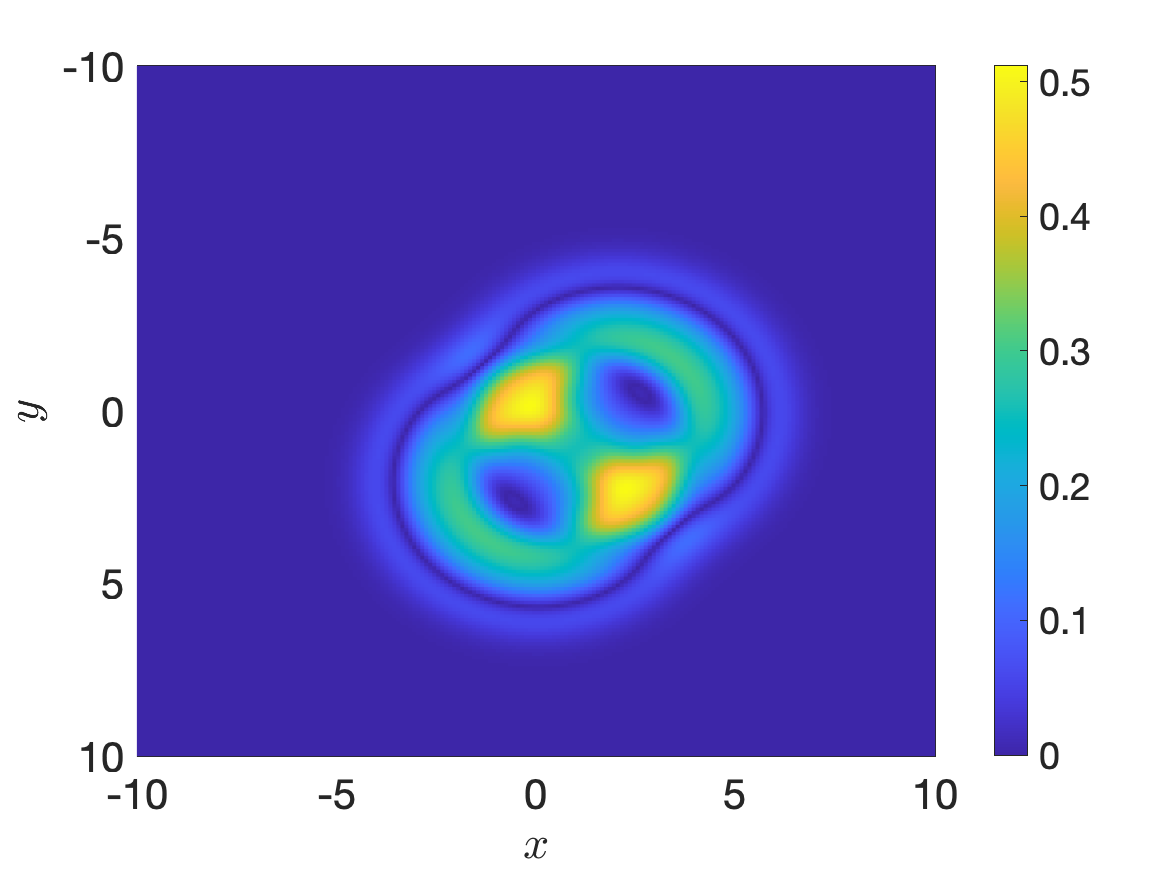}
\end{minipage} &

\begin{minipage}[b]{\linewidth}
  \centering
  \includegraphics[width=\linewidth]{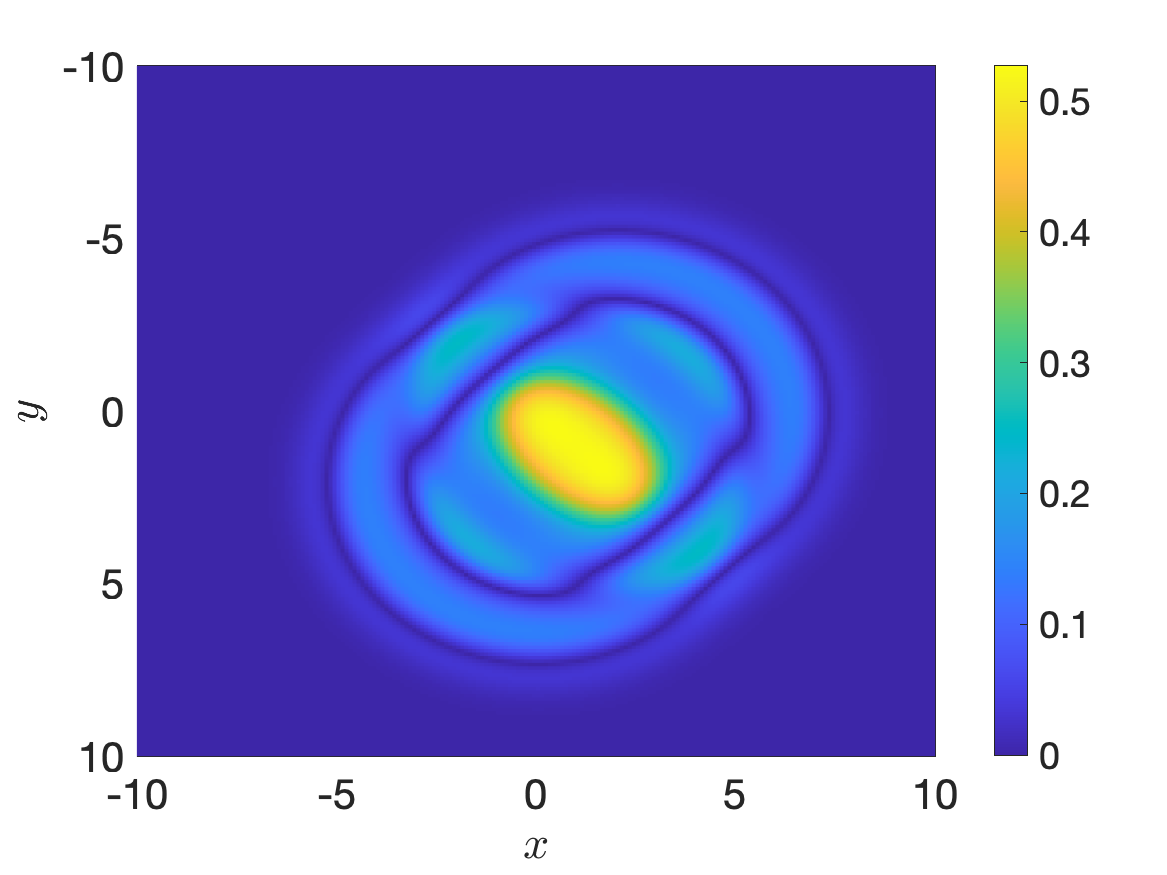}
\end{minipage}
\\[0.6em] 

\makecell{\small \Rc{Structure-preserving}\\ \small Quadratic ROM} &

\begin{minipage}[b]{\linewidth}
  \centering
  \includegraphics[width=\linewidth]{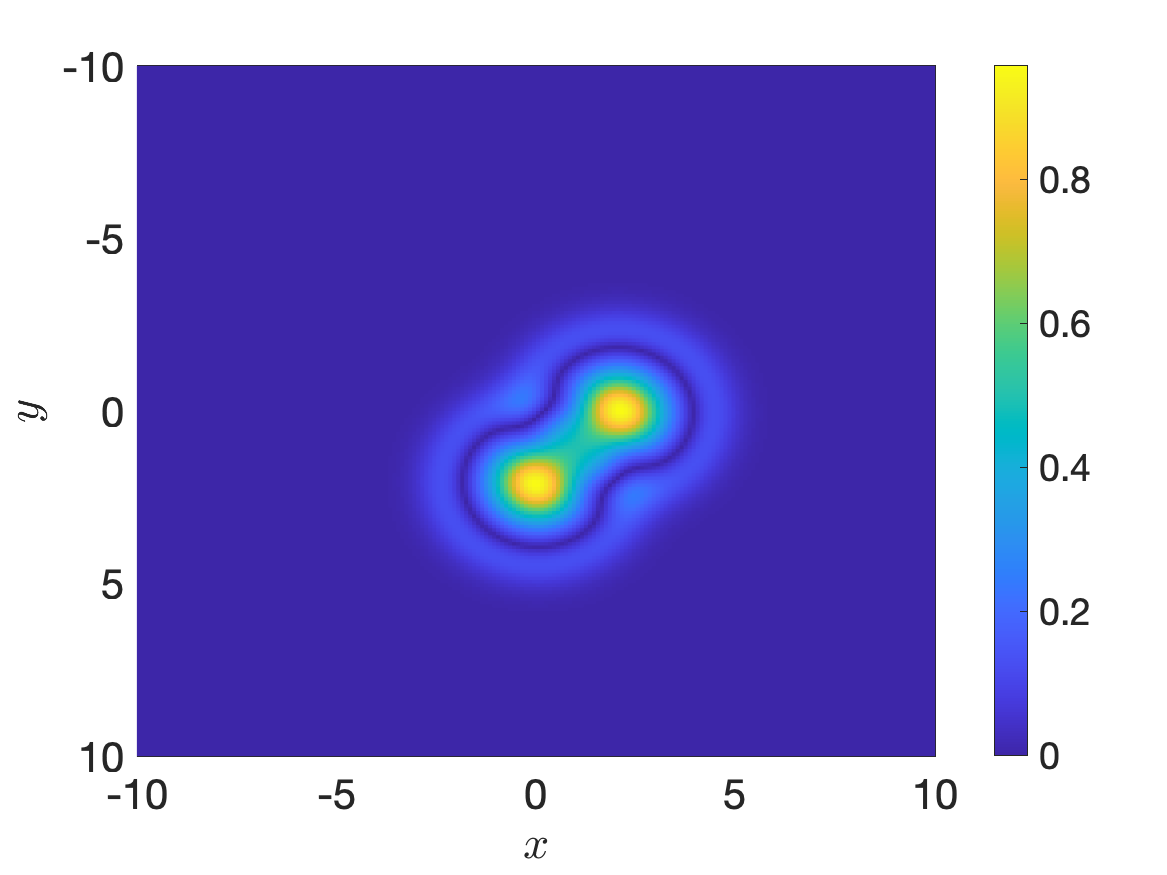}
  \subcaption{$t=1.5$}
\end{minipage} &

\begin{minipage}[b]{\linewidth}
  \centering
  \includegraphics[width=\linewidth]{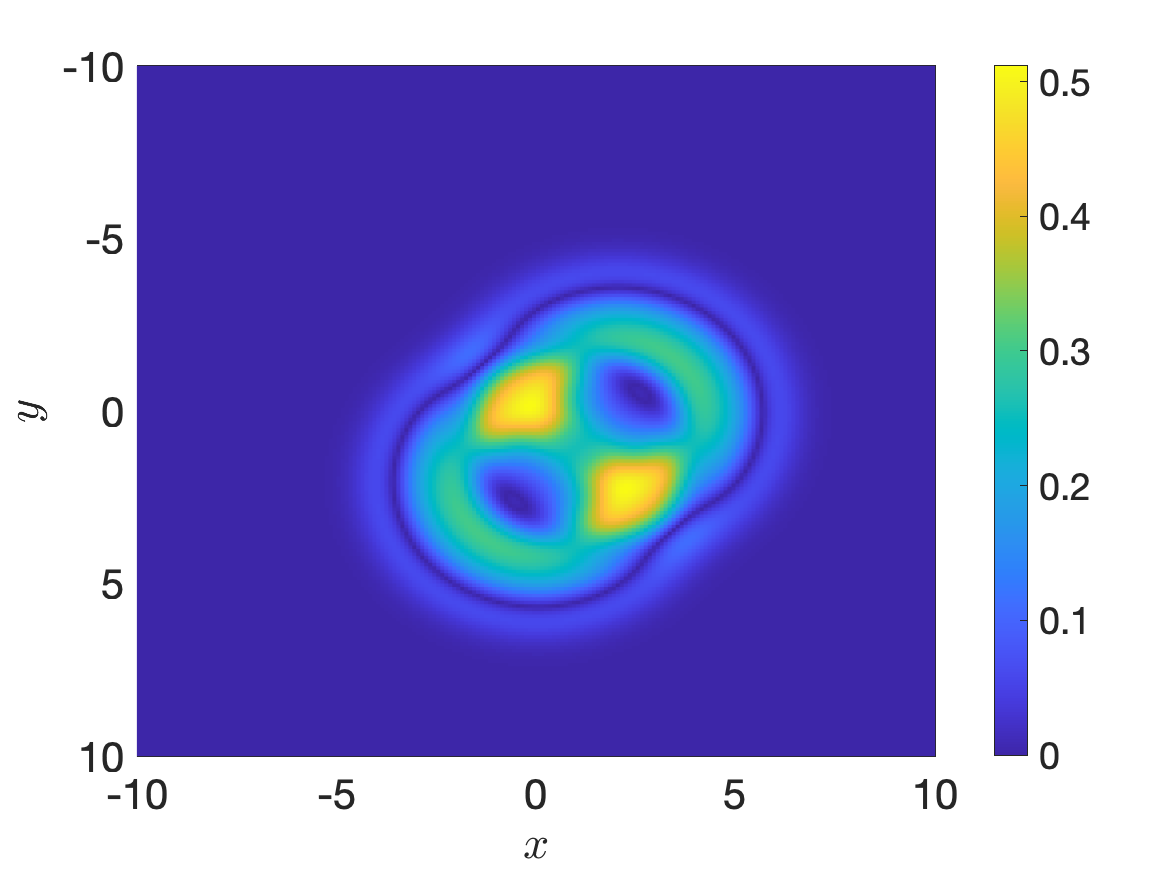}
  \subcaption{$t=3$}
\end{minipage} &

\begin{minipage}[b]{\linewidth}
  \centering
  \includegraphics[width=\linewidth]{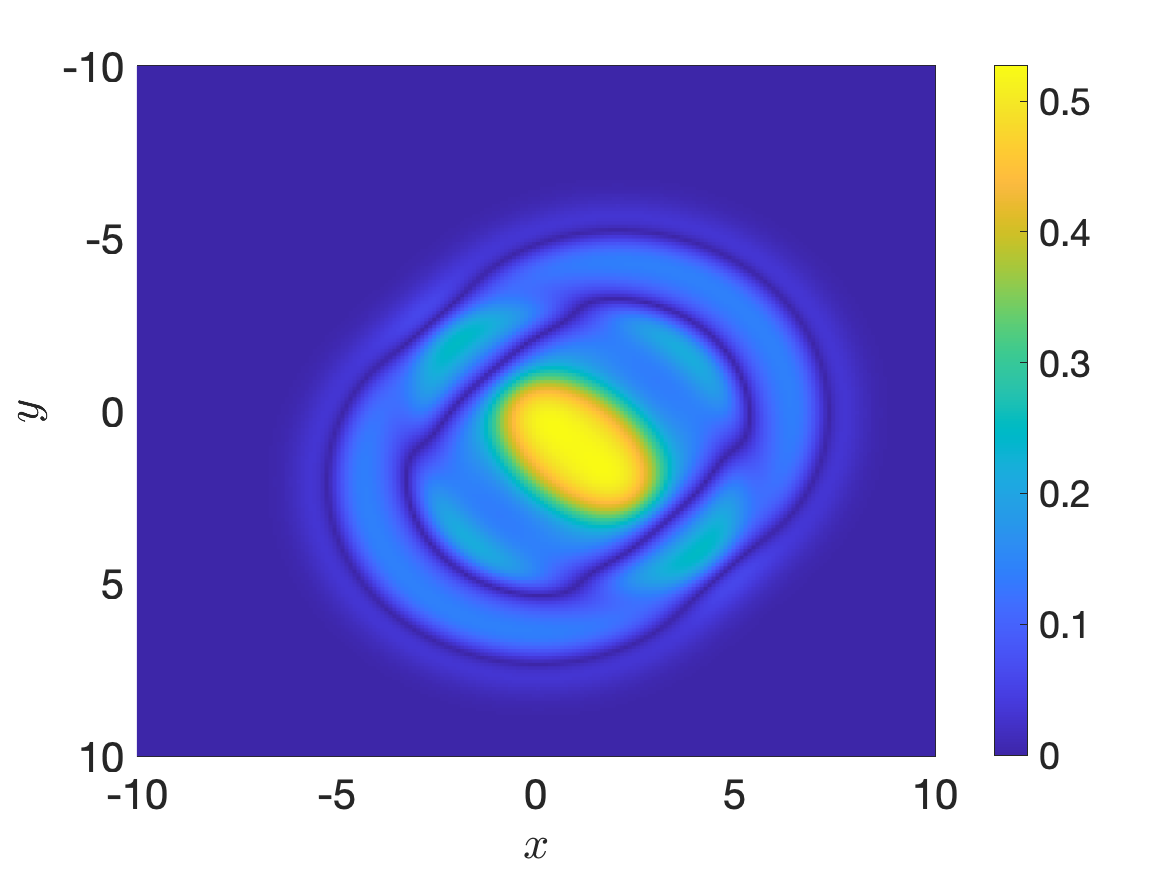}
  \subcaption{$t=4.5$}
\end{minipage}
\\

\end{tabular}
    \caption{Klein-Gordon-Zakharov equations. Plots compare the the absolute value of the complex-valued field $\psi(x,y,t)$ at selected time instances $t\in \{1.5, 3, 4.5\}$ for the nonlinear conservative FOM (top row) and the structure-preserving quadratic ROM (bottom row). The quadratic ROM of dimension $7r=420$ provides accurate approximate solutions for $|\psi(x,y,t)|$ at all three time instances. }
\label{fig:kgz_psi}
\end{figure}
\begin{figure}[tbp]
\centering

\newcolumntype{C}[1]{>{\centering\arraybackslash}m{#1}}

\setlength{\tabcolsep}{8pt}
\renewcommand{\arraystretch}{1.0}

\begin{tabular}{C{0.20\textwidth} C{0.23\textwidth} C{0.23\textwidth} C{0.23\textwidth}}

\makecell{\small Conservative \\ \small \Rc{Nonlinear} FOM} &

\begin{minipage}[b]{\linewidth}
  \centering
  \includegraphics[width=\linewidth]{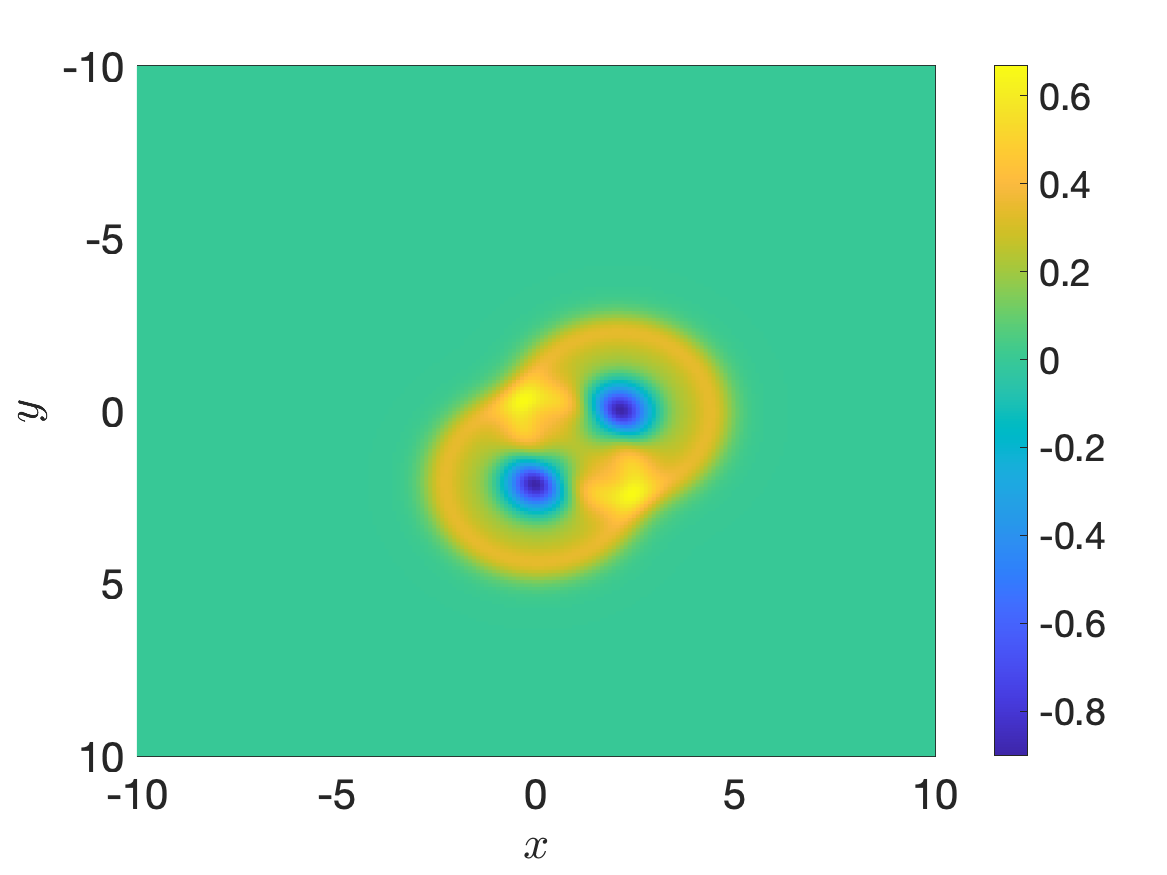}
\end{minipage} &

\begin{minipage}[b]{\linewidth}
  \centering
  \includegraphics[width=\linewidth]{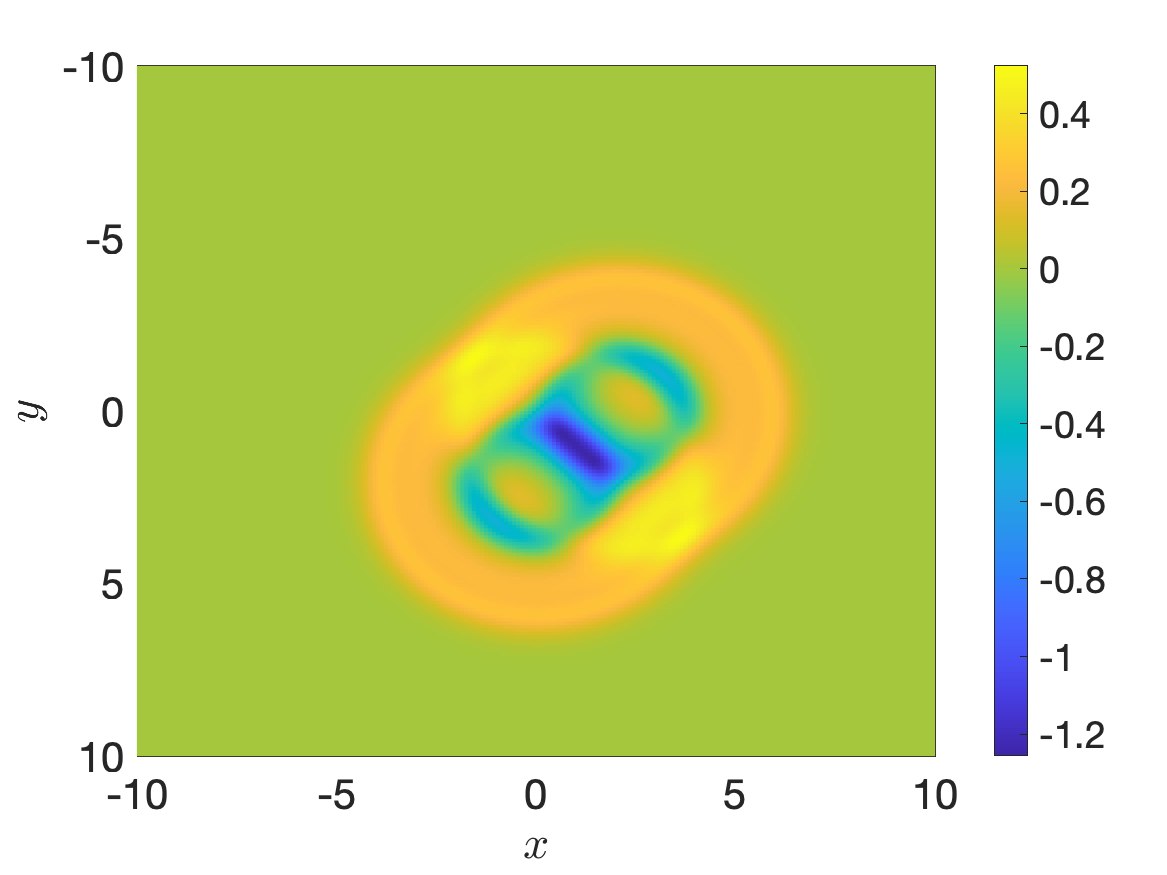}
\end{minipage} &

\begin{minipage}[b]{\linewidth}
  \centering
  \includegraphics[width=\linewidth]{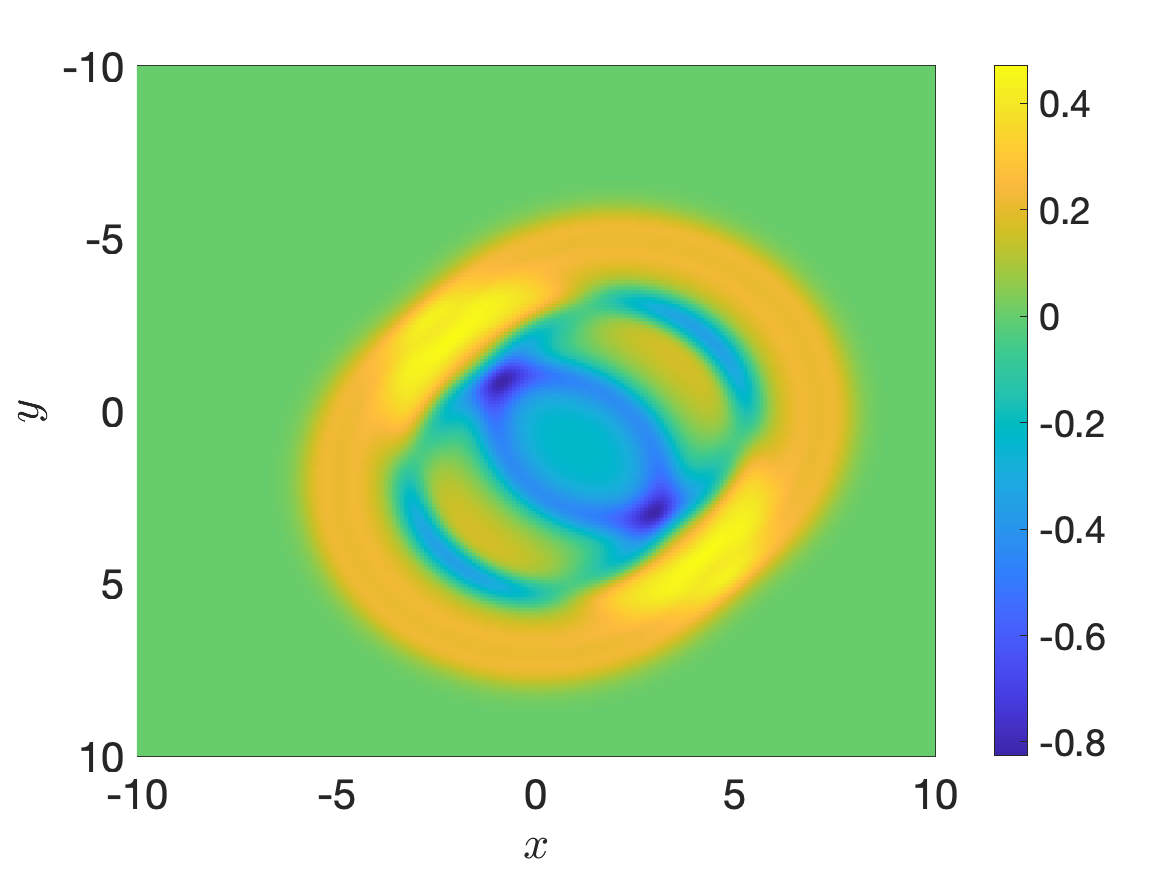}
\end{minipage}
\\[0.6em] 

\makecell{\small \Rc{Structure-preserving} \\ \small Quadratic ROM} &

\begin{minipage}[b]{\linewidth}
  \centering
  \includegraphics[width=\linewidth]{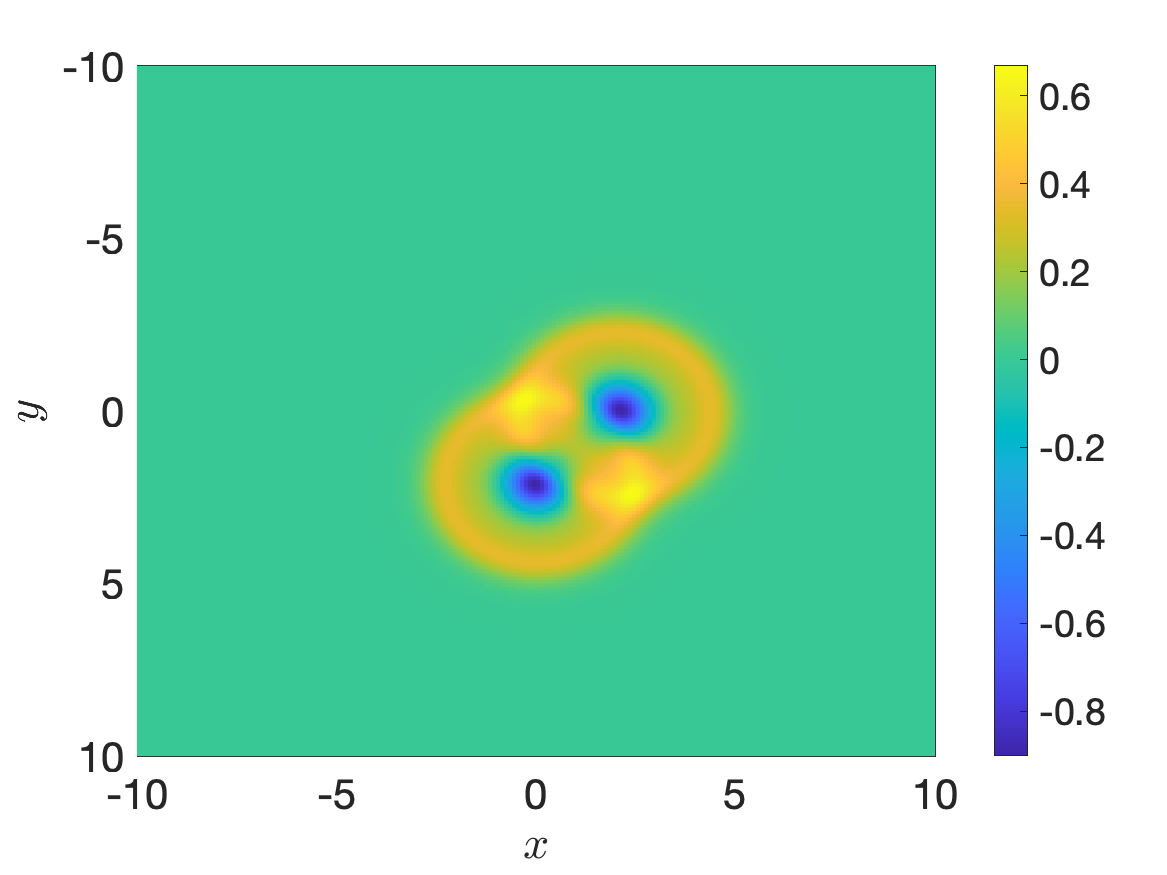}
  \subcaption{$t=1.5$}
\end{minipage} &

\begin{minipage}[b]{\linewidth}
  \centering
  \includegraphics[width=\linewidth]{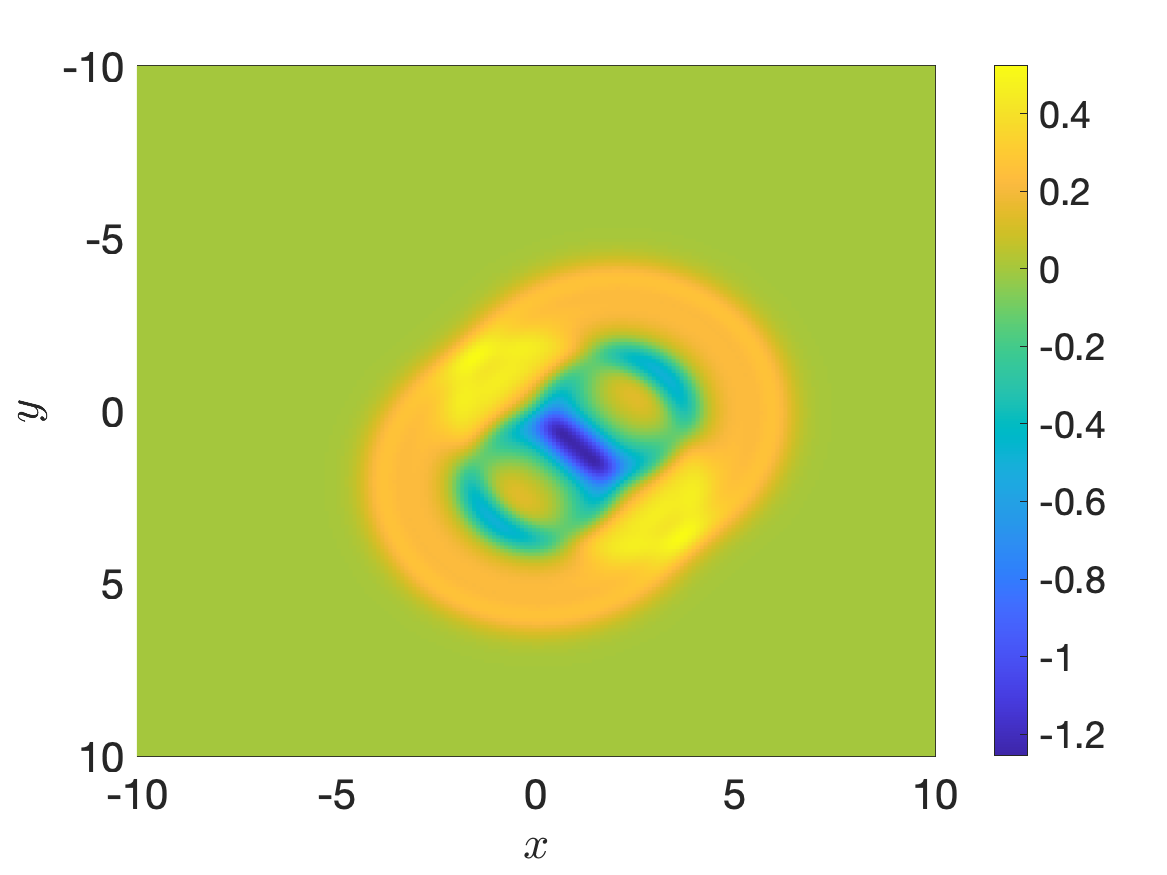}
  \subcaption{$t=3$}
\end{minipage} &

\begin{minipage}[b]{\linewidth}
  \centering
  \includegraphics[width=\linewidth]{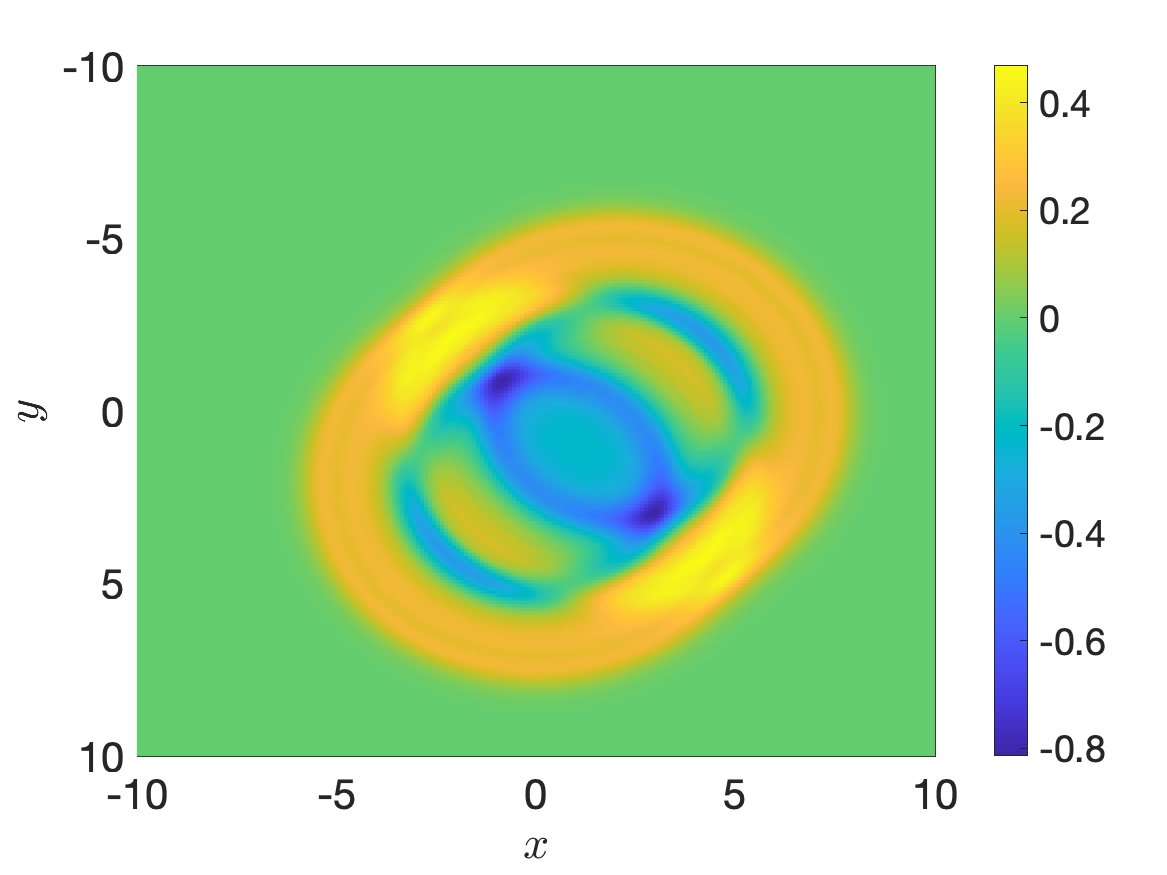}
  \subcaption{$t=4.5$}
\end{minipage}
\\

\end{tabular} 
    \caption{Klein-Gordon-Zakharov equations. Plots compare the the evolution of the scalar field $\phi(x,y,t)$ at selected time instances $t\in \{1.5, 3, 4.5\}$ for the nonlinear conservative FOM (top row) and the structure-preserving quadratic ROM (bottom row). The quadratic ROM of dimension $7r=420$ accurately captures the time-evolution of $\phi(x,y,t)$ at a substantially lower computational cost.}
\label{fig:kgz_phi}
\end{figure}
\section{Conclusions}
\label{sec:conclusion}
We have presented an energy-quadratization strategy for structure-preserving model reduction of nonlinear wave equations via lifting transformations. The proposed approach first transforms the nonlinear conservative FOM with nonlinear energy to a quadratic FOM with a quadratic energy in the lifted variables and then projects them onto a reduced space to derive quadratic ROMs. We presented a theoretical result that shows that the aforementioned quadratic ROMs  conserve the lifted FOM energy exactly. The key advantage of this approach is that it provides a constructive way to derive lifting transformations that respect the key physical properties of the original system. Moreover, this work also provides a computationally efficient alternative strategy to the gradient-preserving hyper-reduction method that is considered the state-of-the-art in structure-preserving model reduction of nonlinear conservative PDEs. 

The numerical examples demonstrate that the structure-preserving lifting approach yields energy-conserving quadratic ROMs that provide accuracy similar to nonlinear PSD ROMs while achieving computational efficiency comparable to the gradient-preserving hyper-reduction method~\cite{pagliantini2023gradient}. The numerical
results also show that the proposed structure-preserving lifting approach yields generalizable quadratic ROMs that provide accurate and stable predictions outside the training data regime in both time extrapolation and parameter extrapolation settings. The final numerical example with the Klein-Gordon-Zakharov equations, a system of six coupled nonlinear conservative PDEs, shows the wider applicability of the proposed approach for nonlinear conservative PDEs with conservation laws that can not be written in the canonical Hamiltonian form. 

Future research directions motivated by this work are: combining the proposed structure-preserving lifting approach with the nonintrusive Lift \& Learn method~\cite{qian2020lift} for learning structure-preserving ROMs of nonlinear wave equations directly from data; {\Ra{extending the proposed structure-preserving lifting approach to problems where the contangent lift basis is not useful}}; studying the connection between qualitative properties of the quadratic ROM and the nonlinear PSD ROM from a dynamical systems perspective, and encoding the proposed energy-quadratization strategy as a physics-based constraint into the optimal quadratization framework~\cite{bychkov2024exact} to automate the process of deriving structure-preserving lifting transformations.
\section*{Acknowledgments}
 H. Sharma and B. Kramer were in part financially supported by the Applied and Computational Analysis Program of the Office of Naval Research under award N000142212624 and NSF-CMMI award 2144023. We thank Iman Adibnazari for his help with running numerical experiments for the two-dimensional Klein-Gordon-Zakharov equations on the Triton Shared Computing Cluster (TSCC) allocation supported by ONR DURIP Award No. N000142412222.  
\bibliographystyle{vancouver}
\bibliography{main}
\end{document}